\documentclass[12pt]{amsart}	
\usepackage[usenames]{color}

\usepackage{float}

\usepackage{soul}
\usepackage[normalem]{ulem}
	\usepackage{wasysym}

\usepackage{amsmath,amssymb,amsthm,amsfonts,enumerate,url,mathbbol}
\usepackage{mathrsfs,tikz,graphicx,pifont, svg,stackrel} 
\usepackage{hyperref}

\usepackage{mathtools}

\usepackage{aliascnt}

\usepackage{pgfplots}
\usepgfplotslibrary{polar}

\usetikzlibrary{automata,positioning,arrows}
\usetikzlibrary{arrows.meta,calc,decorations.markings,math,arrows.meta}
	
\usepackage[utf8]{inputenc}
\usepackage{verbatim}
\usepackage{array}

\def\be{\begin{equation}}
\def\ee{\end{equation}}
\def\beq{\begin{eqnarray}}
\def\eeq{\end{eqnarray}}
\def\beqs{\begin{eqnarray*}}
\def\eeqs{\end{eqnarray*}}
\def\ea{\end{array}}
\def\ea{\end{array}}
\def\bt{\begin{theo}}
\def\et{\end{theo}}
\def\bl{\begin{lemma}}
\def\el{\end{lemma}}
\def\br{\begin{rems}}
\def\er{\end{rems}}
\def\bc{\begin{coro}}
\def\ec{\end{coro}}

\theoremstyle{plain}
\newtheorem{theo}{Theorem}[section]

\newaliascnt{cor}{theo}
\newaliascnt{prop}{theo}
\newaliascnt{lemma}{theo}

\newtheorem{lemma}[lemma]{Lemma}
\newtheorem{prop}[prop]{Proposition}
\newtheorem{cor}[cor]{Corollary}

\aliascntresetthe{cor}
\aliascntresetthe{prop}
\aliascntresetthe{lemma}

\theoremstyle{definition} 

\newaliascnt{defi}{theo}
\newaliascnt{assum}{theo}
\newaliascnt{assums}{theo}
\newaliascnt{prob}{theo}

\newtheorem{defi}[defi]{Definition}
\newtheorem{assum}[assum]{Assumption}
\newtheorem{assums}[assums]{Assumptions}

\aliascntresetthe{defi}
\aliascntresetthe{assum}
\aliascntresetthe{assums}
\aliascntresetthe{prob}

\theoremstyle{remark}

\newaliascnt{rems}{theo}
\newaliascnt{rem}{theo}
\newaliascnt{exa}{theo}
\newaliascnt{exs}{theo}

\newtheorem{rems}[rems]{Remarks}
\newtheorem{rem}[rem]{Remark}
\newtheorem{exa}[exa]{Example}

\aliascntresetthe{rems}
\aliascntresetthe{rem}
\aliascntresetthe{exa}
\aliascntresetthe{exs}

%


\numberwithin{theo}{section}
\numberwithin{equation}{section}
\numberwithin{figure}{section}

\DeclareMathOperator{\Real}{\Re}
\DeclareMathOperator{\Id}{Id}

\DeclareMathOperator{\Tr}{Tr}
\DeclareMathOperator{\diag}{diag}

\DeclareMathOperator{\lin}{span}

  \def\mG{\mathsf{G}} 
    
  \def\mV{\mathsf{V}}
  
  \def\mE{\mathsf{E}}

  \def\mmu{\mathsf{u}}

 \def\mv{\mathsf{v}}
 \def\me{\mathsf{e}}
 \def\mw{\mathsf{w}}
  \def\mW{\mathsf{W}}
  \def\mf{\mathsf{f}}

\newcommand{\R}{\mathbb{R}}
\newcommand{\C}{\mathbb{C}}
\newcommand{\N}{\mathbb{N}}

\newcommand{\bL}{\mathbf{L}}

\textwidth=39cc
\oddsidemargin.2mm
\evensidemargin.2mm

\title{Linear Hyperbolic Systems on Networks:\\
 Well-posedness and qualitative properties} 

\author[M.~Kramar Fijav\v{z}]{Marjeta Kramar Fijav\v{z}}
\address{Marjeta Kramar Fijav\v{z}, University of Ljubljana, Faculty of Civil and Geodetic Engineering, Jamova 2, SI-1000 Ljubljana, Slovenia / Institute of Mathematics, Physics, and Mechanics, Jadranska 19, SI-1000 Ljubljana, Slovenia}
\email{marjeta.kramar@fgg.uni-lj.si}

\author[D.~Mugnolo]{Delio Mugnolo}
\address{Delio Mugnolo, Lehrgebiet Analysis, Fakult\"at Mathematik und Informatik, Fern\-Universit\"at in Hagen, D-58084 Hagen, Germany}
\email{delio.mugnolo@fernuni-hagen.de}

\author[S.~Nicaise]{Serge Nicaise}
\address{Serge Nicaise, Universit\'e Polytechnique Hauts-de-France, LAMAV,
FR CNRS 2956, 
F-59313 - Valenciennes Cedex 9 France}
\email{Serge.Nicaise@uphf.fr}

\keywords{Hyperbolic systems, operator semigroups, PDEs on networks, invariance properties, Saint-Venant system, second sound}

\subjclass[2010]{47D06, 35L40, 35R02, 81Q35}

\thanks{The work of M.K.F.~was partially supported by the Slovenian Research Agency, Grant No.~P1-0222, and the work of D.M.~by the Deutsche Forschungsgemeinschaft (Grant 397230547). This article is based upon work from COST Action 18232 MAT-DYN-NET, supported by COST (European Cooperation in Science and Technology), www.cost.eu.}

\begin{document}

\begin{abstract}
{
We study hyperbolic systems of one-dimensional partial differential equations under general, possibly non-local boundary conditions. A large class of evolution equations, either on individual 1-dimensional intervals or on general networks, can be reformulated in our rather flexible formalism, which generalizes the classical technique of first-order reduction. We study forward and backward well-posedness; furthermore, we provide necessary and sufficient conditions on both the boundary conditions and the coefficients arising in the first-order reduction for a given subset of the relevant ambient space to be invariant under the flow that governs the system. Several examples are studied.
}
\end{abstract}

\maketitle

\section{Introduction}

This paper is devoted to the study of systems of partial differential equations in 1-dimensional setting, more precisely, on collections of intervals: not only internal couplings are allowed, but also interactions at the endpoints of the intervals. It is then natural to interpret these systems as networks, and in fact, we will dwell on this viewpoint throughout the paper. 

Partially motivated by investigations in quantum chemistry since the 1950s, differential operators of second order on networks have been often considered in the mathematical literature since the pioneering investigations by Lumer~\cite{Lum80} and Faddeev and Pavlov~\cite{PavFad83}: in these early examples, either heat or Schrödinger equations were of interest. This has paved the way to a manifold of investigations, see e.g.\ the historical overview in~\cite{Mug14}.

The equations we are going to study in this paper will, however, be rather hyperbolic; more precisely, the \textit{hyperbolic systems} of partial differential equations of our interest  are of the form
\[
\dot{u} = M u' + Nu,
\] 
where, here and below, we denote everywhere by 
$\dot{u}$ and $u'$
the partial derivative  of a function $u$ with respect to the time variable $t$ and to the space variable $x$, respectively.

Each of these equations models a physical system: we consider several of these systems  and allow them to interact at their boundaries, thus producing a collection of hyperbolic systems on a network.  Hyperbolic evolution equations of different kinds taking place on the edges of a network have been frequently considered in the literature, we refer to~\cite{nicaise:93,LagLeuSch94,dagerzuazua,DorKraNag10,Mug14} for an overview. 
{Let us emphasize that we shall only consider \emph{linear} systems: for a survey on some recent developments of the theory for  nonlinear hyperbolic systems and many practical applications see e.g.~\cite{BCGHP14}.}

On each edge of the network we allow for possibly different dynamics (say, Dirac-like, wave-like, beam-like, etc.), thus it would be more precise to write
\begin{equation}\label{eq:acp0}
\dot{u}_\me=M_\me u'_\me+ N_\me u_\me,\qquad \me\in\mE,
\end{equation}
where $\mE$ is the edge set of the considered network. In particular,  in the easiest cases $M_\me$ may be a diagonal matrix of coefficients of a transport-like equations, but $M_\me$ may well have off-diagonal entries, or even have a symplectic structure: additionally, we allow all these $M_\me$'s to have different size, which of course has to be taken into account by the boundary conditions. 

We are \textit{not} going to assume the matrices $M_\me$ to be either positive or negative semidefinite -- in fact, not even Hermitian; therefore, it is at a first glance not clear at which endpoints the boundary conditions should be imposed at all. Indeed, the choice of appropriate transmission conditions in the vertices of the network is the biggest difficulty one has to overcome.

\medskip
While Ali Mehmeti began the study of wave equations on networks already in~\cite{Ali89}, it was to the best of our knowledge only at the end of the 1990s that first order differential operators on networks began to be studied. In~\cite{Car99}, Carlson defined on a network the \emph{momentum operator} -- i.e., the operator defined edgewise as $\imath\frac{d}{dx}$ -- and gave a sufficient condition -- in terms of the boundary conditions satisfied by functions in its domain -- for self-adjointness, hence for generation of a unitary group governing a system of equations
\begin{equation}\label{eq:trans-carls}
\dot{u} = \pm u'
\end{equation}
with couplings in the boundary (i.e., in the nodes of the networks). Similar ideas were revived in~\cite{EndSte10,Exn13}, where different sufficient conditions of combinatorial or algebraic nature were proposed. A characteristic equation and the long-time behavior of the semigroup governing~\eqref{eq:acp0} as well as further spectral and extension theoretical properties were discussed in~\cite{Klo10,JorPedTia13}, respectively, in dependence of the boundary conditions. {While all the above mentioned  authors -- as well as the present manuscript -- apply Hilbert space techniques, a semigroup approach to study simple transport equations in Banach spaces (like the space of $L^1$-functions  along the edges of a network) was presented in \cite{KraSik05,MatSik07,DorKraNag10}, see also \cite[Sec.~18]{BatKraRha17} and the references given there.}

All these above mentioned papers treat essentially the same parametrization of boundary conditions, namely 
\[
\underline{u(0)}=T\underline{u(\ell)}
\]
for a suitable matrix $T$ (possibly consisting of diagonal blocks that correspond to the network's vertices), where $\underline{u(0)}$ and $\underline{u(\ell)}$ denote the vectors of boundary values of $u$ at the initial and terminal endpoints of all intervals, respectively. 

Bolte and Harrison studied in~\cite{BolHar03} the Dirac equation on networks. The 1D Dirac equation consists of a system of two coupled first order (both in time and space) equations, much like~\eqref{eq:acp0}; the matrix $M_\me$ is Hermitian, which allows for simple integration by parts and, in turn, for the emergence of a convenient symplectic structure. Both internal and boundary couplings had to be considered, and the relevant coupling matrix is indefinite. They thus adopted the parametrization
\[
A\underline{u(0)}+B\underline{u(\ell)}=0
\]
for the boundary conditions, for suitable matrices $A,B$: 
mimicking ideas from~\cite{KosSch99}, they were able to characterize those $A,B$ that lead to self-adjoint extensions. Self-adjointness of more general first-order differential operator matrices has been studied in~\cite{SchSeiVoi15}.

In this paper, we opt for yet another parametrization of the boundary conditions, inspired by a classical Sturm--Liouville formalism borrowed by Kuchment to discuss self-adjoint extensions of Laplacians on networks in~\cite{Kuc02}  (see also  \cite{Nic:2017} for the  ``telegrapher's equation''  on networks with similar boundary conditions). More precisely, we impose boundary conditions of the form
\[
\begin{pmatrix}
\underline{u(0)}\\ \underline{u(\ell)}
\end{pmatrix}\in Y
\]
for a subspace $Y$ of the space of boundary values; and find sufficient conditions on $Y$ that, in dependence on $M$ and an auxiliary matrix $Q$, guarantee that the abstract Cauchy problem associated with~\eqref{eq:acp0} is governed by a (possibly unitary, under stronger assumptions) group, or a (possibly contractive, under stronger assumptions) semigroup.

The auxiliary matrix $Q$ -- often called a \textit{Friedrichs symmetrizer} in the literature, see~\cite[Def.~2.1]{BenGav07} --  will play a fundamental role in our approach. Roughly speaking, its role is \textit{not} to diagonalize $M$, but only to make it Hermitian; this is done by suitably modifying the inner product of the $L^2$-space over the network by means of $Q$, which therefore has in turn to be positive definite; especially for this reason, our whole theory is essentially relying upon the Hilbert space structure. 
Our approach allows us in particular to prove generation of unitary $C_0$-groups and contractive $C_0$-semigroups (and, by perturbation, of general $C_0$-(semi)groups). {This has a long tradition that goes back to Lax and Phillips~\cite{LaxPhi60}, who already propose the idea 
 of transforming boundary conditions into the requirement} that  at each boundary point $\mv$
the boundary values belong to a given subspace $Y_\mv$. Indeed, while our well-posedness results are not surprising \textit{once the correct boundary conditions are found}, the actually tricky task -- as long as $M$ is not diagonalizable, the standard assumption among others in~\cite{BenGav07,
JacZwa12,Rau12,Eng13,BasCor16} -- is to actually find the right dimension of the space $Y_\mv$. In this paper, we pursue this task by a fair amount of linear algebra that eventually allows us to parametrize the boundary conditions leading to \textit{contractive} (semi)groups. This should be compared with the more involved situation in higher dimension, see e.g.~\cite{Rau85}, which allows for less explicit representation of the boundary conditions.
Our setting is thus arguably more general than the approaches to hyperbolic systems on networks that have recently emerged, including port-Hamiltonian systems~\mbox{\cite{ZwaLeGMasVil10,JacMorZwa15,{WauWeg19}}}  
and hyperbolic systems that can be transformed into characteristic forms via Riemann coordinates~\mbox{\cite{BasCor16}}, both based on diagonalization arguments.

{A relevant by-product of our approach is the possibility to characterize in terms of $Q,M,N$ positivity and further qualitative properties of the solutions of the initial value problem associated with~\eqref{eq:acp0}.}
In this context, we regard as particularly relevant~\autoref{prop:posit} and~\autoref{lem:serge-diagon}, which roughly speaking state that the semigroup governing~\eqref{eq:acp0} can only be positive if $M$ is diagonal, up to technical assumptions (including that $Q$ is diagonal too; this is not quite restrictive, as e.g.\ all of the examples we will discuss in Section~\ref{sec:examples} will satisfy it); this negative result essentially prevents most evolution equations of non-transport type arising in applications from being governed by a positive semigroup. 

The obtained results also form a basis for studies of different stability and control problems related to the presented hyperbolic systems. 
In \autoref{rem:control} we give some immediate indications for these studies but leave further problems  for possible future considerations.   

Let us sketch the structure of our paper.
In Section~\ref{sec:general} we present our general assumptions and discuss their role by showing that a broad class of examples fits into our scheme. In Sections~\ref{sec:iso} and~\ref{sec:contr} we then show that our description of boundary condition allows for easy description of realizations that generate (semi)groups. We also find necessary and sufficient conditions for qualitative properties of these semigroups, including reality and positivity.

We conclude this paper by reviewing in Section~\ref{sec:examples} several applications of our method; among other we discuss forward and backward well-posedness of different equations modeling wave phenomena on networks, including 1D Saint--Venant, Maxwell, and Dirac equations. We study different regimes for the Saint-Venant equation and discuss transmission conditions in the vertices that imply forward, but not backward well-posedness of the Dirac equation. We also study in detail an interesting model of mathematical physics for heat propagation in supercold molecules; we extend the results from~\cite{Racke} by providing physically meaningful classes of transmission conditions implying well-posedness and proving nonpositivity of the semigroup governing this system.

Some technical results, which seem to be folklore, are recalled in the appendices.
  
\medskip
{\bf Acknowledgment.} The authors would like to thank Roland Schnaubelt (Karlsruhe) for interesting suggestions concerning early literature devoted to the topic of hyperbolic systems.

\section{General setting and main examples}\label{sec:general}

Let $\mE$ be a nonempty finite set, which we will identify with the edges of a network upon associating a length $\ell_\me$ with each $\me\in \mE$.
To fix the ideas, take $\me\in \mE$ and $\ell_\me>0$.  (We restrict for simplicity to the case of a network consisting of edges of finite length only, although our results can be easily extended to the case of networks consisting of finitely many leads -- semi-bounded intervals -- attached to  a ``core'' of finitely many edges of finite length.)  We will consider evolution equations of the form 
\begin{equation}\label{eq:max1}
\dot{u_\me }(t,x)=M_\me(x)u'_\me(t,x)+ N_\me(x) u_\me (t,x), \quad t\ge 0,\ x\in (0,\ell_\me),
\end{equation}
where $u_\me$ is a vector-valued function of size $k_\me\in \N_1:=\{1,2, \ldots\}$, and $M_\me$ and $N_\me$ are 
 matrix-valued functions of size $k_\me\times k_\me$. 
We will couple equations \eqref{eq:max1} for different $\me\in \mE$ via boundary conditions given later on.

If $M_\me(x)$ is Hermitian for all $x$, then integrating by parts we obtain   for all $u\in \bigoplus_{\me\in\mE} H^1(0,\ell_\me)^{k_\me}$ \footnote{ Throughout this paper $\oplus_{\me\in\mE} H_\me$ denotes the direct sum of the Hilbert spaces $H_\me$, $\me\in \mE$, i.e., $\bigoplus_{\me\in\mE} H_\me:=\{(h_\me)_{\me\in \mE}: h_\me\in H_\me\}$.}
\begin{equation}\label{eq:ur-ibp}
\begin{split}
2\Re\sum_{\me\in\mE}\int_0^{\ell_\me}
M_\me u'_\me\cdot \bar u_\me \,dx
=-\sum_{\me\in\mE}\int_0^{\ell_\me}
  u_\me \cdot   
M'_\me \bar u_\me  \,dx+\sum_{\me\in\mE} \left[  M_\me   u_\me \cdot \bar u_\me \right]\Big|_0^{\ell_\me},
\end{split}
\end{equation}
which -- provided $M'_\me$ is essentially bounded -- allows for an elementary dissipativity analysis of the operator  that governs the abstract Cauchy problem associated with~\eqref{eq:max1} in a natural Hilbert space. Also the case of diagonalizable matrices $M_\me$ is benign enough, see e.g.~\cite[\S~7.3]{Eva10}.
In the case of general $M_\me$, however, it is not easy to control all terms that arise when integrating against test functions and we have to resort to different ideas.

\begin{assums}\label{assum} 
\begin{enumerate}[(1)] For each $\me\in\mE$ the following holds.
\item The matrix $M_\me(x)$ is invertible for  each $x\in [0,\ell_\me]$ and the mapping $[0,\ell_\me]\ni x\mapsto M_\me(x)\in M_{k_\me}(\C)$ is  Lipschitz continuous, in other words, $M_\me\in W^{1,\infty}(0,\ell_\me)$.
\item The mapping $[0,\ell_\me]\ni x\mapsto N_\me(x)\in M_{k_\me}(\C)$ is of class $L^\infty$.
\item\label{assum:Q} There exists a  Lipschitz continuous function
$[0,\ell_\me]\ni x\mapsto Q_\me(x)\in M_{k_\me}(\C)$ such that 
\begin{enumerate}[(i)]
\item\label{assumMe}  $Q_\me(x)$ and $Q_\me(x) M_\me(x)$ are Hermitian for all $x\in [0,\ell_\me]$,
\item
\label{sn:29/1:1} $Q_\me(\cdot)$ is \emph{uniformly positive definite}, i.e.,
there exists $q>0$ such that  
\[Q_\me(x)\xi \cdot \bar \xi \geq q \|\xi \|^2
\text{ for all }\xi \in \C^{k_\me} \text{  and } x\in [0,\ell_\me].\footnote{
 Throughout this paper, $\C^{k_\me}$ is equipped with its Euclidean norm $\|\cdot \|$ and $M_{k_\me}(\C)$ 
with the induced operator norm, also denoted by $\|\cdot \|$, since no confusion is possible.}\]
\end{enumerate}
\end{enumerate}
\end{assums}

If $M_\me(x)$ is Hermitian for all $x$, then  \autoref{assum}.(\ref{assumMe}) are trivially satisfied by taking $Q_\me$ to be the $k_\me\times k_\me$ identity matrix, {although this is not the only possible choice and, in fact, it is sometimes actually possible and convenient to take non-diagonal $Q_\me$}.  \autoref{assum}.(\ref{assum:Q}) holds if and only if the system \eqref{eq:max1} is  hyperbolic in the sense of \cite{BasCor16}, see \autoref{l:hyperbolic_Q-new} below. 
But we prefer this formulation because the matrices $Q_\me$  will be involved in the boundary conditions.

The fact that $Q_\me(x) M_\me(x)$ is Hermitian for all $x$ greatly simplifies our analysis. At the same time, many examples from physics, chemistry, biology, etc., fit in this framework.

The most trivial examples are obtained by taking $M_\me$ as a diagonal matrix with spatially constant entries: this choice leads to classical (vector-valued) transport problems on networks. For $k_\me\equiv 1$ they were considered in \cite{KraSik05} and subsequent papers, cf.\ the literature quoted in~\cite[Sec.~18]{BatKraRha17}.

\begin{exa}\label{exa:saintv}
 The $2\times 2$ hyperbolic system
\begin{equation}\label{syst2time2}
\left\{\begin{array}{ll}
\dot{p}+L q' +G p+Hq=0  \quad\hbox{ in  }  (0,\ell) \times(0,+\infty),\\
\dot{q}+P p' +K q+Jp=0 \quad\hbox{ in  }  (0,\ell)\times(0,+\infty),
\end{array}
\right.
\end{equation}
on a real interval $(0,\ell)$ generalizes the first order reduction of the wave equation and offers a general framework to treat models that appear in several applications.  The analysis of this system on networks
with different boundary conditions 
has been performed in  \cite{Nic:2017}.

In electrical engineering
\cite{MaffucciMiano:06,ImpJol14},
$p$ (resp. $q$) represents the voltage $V$ (resp. the electrical  current $I$) at $(\ell-x, t)$,
 $H=J=0$, $L=\frac{1}{C}$, $P=\frac{1}{L}$,
$G=\frac{\hat{G}}{C}$, $K=\frac{R}{L}$, where  $C>0$ is the capacitance, $L>0$ the inductance, $\hat{G}\geq 0$ the conductance,
and $R\geq 0$ the resistance: \eqref{syst2time2} is then referred to as ``telegrapher's equation''. 

This system also models arterial blood flow \cite{Bressanetal,Carlson:11}
for which  $p$ is the pressure and $q$ the flow rate at $(x, t)$, $L=\frac{1}{C}$, $P=A$, $K=-\frac{2\alpha}{\alpha-1} \frac{\nu}{A}$,
$G=0$, where $A>0$ is the vessel cross-sectional area, $C>0$ is the vessel compliance, $\nu\geq 0$ is the kinematic viscosity coefficient ($\nu\approx 3.2 10^{-6} m^2/s$ for blood)
and $\alpha>1$ is the Coriolis coefficient or correction coefficient ($\alpha=4/3$ for  Newtonian fluids, while $\alpha=1.1$ for non-Newtonian fluids, like blood). 

Given $L,P\in \C$, the \autoref{assum}.(3) hold for system
\eqref{syst2time2} with
\begin{equation}\label{eq:Qhypsys}
M_\me=
-
\begin{pmatrix}
0 & L\\ P & 0
\end{pmatrix},\quad
N_\me=-\begin{pmatrix}
G & H\\ K & J
\end{pmatrix},
\quad \hbox{and}\quad
Q_\me=\begin{pmatrix}
a & b\\ c & d
\end{pmatrix}
\end{equation}
if and only if \footnote{ For a complex number $z$, $\bar z$ denotes its complex conjugate.}
\begin{equation}\label{eq:abcd}
a,d\in \R,\quad b=\overline{c},\quad a>0, \quad ad>|b|^2,\quad aL=d\overline{P},\quad bP,b\overline{L}\in \R,\quad LP\ne 0.
\end{equation}
\end{exa}

\begin{exa}\label{exa:momentum}
The momentum operator $i\frac{d}{d x}$~\cite{Car00,Exn13} does \textit{not} satisfy  \autoref{assum}. 
More generally, if $M_\me$ is skew-Hermitian, then~\autoref{assum} imply that there exists a Hermitian, positive definite matrix $Q_\me$ that anti-commutes with $M_\me$. But then $\Tr Q_\me$, the trace of the matrix $Q_\me$, satisfies
\[\Tr Q_\me = \Tr (M_\me^{-1} Q_\me M_\me) = -\Tr Q_\me ,\] hence $\Tr Q_\me=0$  which is in contradiction with positive definiteness of $Q_\me$.
 \end{exa}

\begin{exa}\label{Saint-Venant}
 The linearized Saint-Venant equation gives rise to a case where 
  $N_\me\ne0$, see \cite[Eq.~(1.27)]{BasCor16}. Indeed, it corresponds to the $2\times 2$ system
\begin{equation}\label{systSaint-Venant}
\left\{
\begin{array}{rcll}
\dot{h}&=&-V h'-H u'-V' h-H' u \quad &\hbox{ in  }  (0,\ell) \times(0,+\infty),\\
\dot{u} &=&- V  u'-g h'+C_f \frac{V^2}{H^2} h
-(V'+2C_f \frac{V}{H}) u\quad&\hbox{ in  }  (0,\ell)\times(0,+\infty),
\end{array}
\right.
\end{equation}
where $h$ is the water depth and  $u$  the water velocity,
and corresponds to 
the linearization around a steady state $(H,V)$
of the Saint-Venant model,
that in particular satisfies
\begin{equation}
\label{sn:15/4:1}
H\ne 0, (HV)'=0 \hbox{ and } gH-V^2\ne 0.
\end{equation}
Here, $g$ is the constant of gravity and $C_f$ is a (positive) constant  friction coefficient. 

Note that \autoref{assum} holds for system
\eqref{systSaint-Venant} with 
\begin{equation}
\label{aq:defMQ-Saint-Venant}
M_\me :=
\left(\begin{array}{ll}
-V  & -H  \\ -g & -V 
\end{array}
\right)\quad\hbox{and}\quad 
Q_\me :=\left(\begin{array}{lll}
g&0\\
0& H 
\end{array}
\right)
\end{equation}
whenever $H$ and $V$ are of class $H^1$ and $H>0$, which holds as soon as we consider a non trivial and smooth enough steady state $(H,V)$, see \eqref{sn:15/4:1}. 

Finally $N_\me$ is clearly given by
\begin{equation}
\label{aq:defN-Saint-Venant}
N_\me :=
\left(\begin{array}{ll}
-V'  & -H'  \\  C_f \frac{V^2}{H^2} & -(V'+2C_f \frac{V}{H})
\end{array}
\right).
\end{equation}
\end{exa}

\section{Parametrization of the realizations: the isometric case\label{sec:iso}}

The catchiest application of our general theory arises whenever we discuss hyperbolic equations (or even systems thereof) on \textit{networks} (also known as \textit{metric graphs} in the literature); in this case, it is natural to interpret $\mE$ as a set of intervals, each with length $\ell_\me$; and boundary conditions in the endpoints $0,\ell_\me$ turn into transmission  conditions in the ramification nodes. Indeed, for each edge $\me$, $k_\me$ boundary conditions  are required. In general, they are expressed in Riemann (characteristic) coordinates, see for instance \cite{BasCor16}. This  means that system \eqref{eq:max1} is transformed into an equivalent  system with a diagonal  matrix $\tilde M_\me$ with $k_\me^+$ (resp. $k_\me^-$)  positive (resp. negative) eigenvalues with   $k_\me^++k_\me^-=k_\me$ and $k_\me^+$ (resp. $k_\me^-$) boundary conditions are imposed at $0$ (resp. $\ell_\me$), which allows to fix the incoming information.
Here, we prefer to write them in the original unknowns. Furthermore it is  \textit{a priori} not clear how these conditions should be adapted to the case of a network, so we will conversely try to parametrize all those transmission conditions in the network's vertices that lead to an evolution governed by a semigroup (of isometries).

In particular, we are going to look for dissipativity, hence $m$-dissipativity of the operator $\pm\mathcal A$ whose restriction 
to the edge $\me$ is given by
\begin{equation}\label{eq:maxop}
({\mathcal A} u)_\me:=  M_\me u'_\me+ N_\me  u_\me 
\end{equation}
 in a natural Hilbert space, see  \eqref{eq:max1}.

To begin with, let us impose the following assumption.
\begin{assum}\label{assum:basic}
Let $\mathcal G$ be a finite network
 (or metric graph) with underlying (discrete) graph $\mG$, i.e., $\mG=(\mV,\mE)$ is a finite, directed  graph with node set $\mV$ and edge set $\mE$ and each $\me\in \mE$ is identified with an interval $(0,\ell_\me)$ whereby the parametrization of the interval agrees with the orientation of the edge.
\end{assum}

We are going to study the problem~\eqref{eq:max1} in the vector space
\[
\bL^2(\mathcal G):=\bigoplus_{\me\in\mE} L^2(0,\ell_\me)^{k_\me}.
\]
Clearly, $\bL^2(\mathcal G)$ becomes a Hilbert space once equipped with the inner product
\begin{equation}\label{eq:prod?}
(u,v):=\sum_{\me\in\mE}\int_0^{\ell_\me}
Q_\me(x) u_\me (x)\cdot \bar v_\me(x)\,dx, \qquad 
u,v\in \bL^2(\mathcal G), 
\end{equation}
(where, {here and below, $z  \cdot \bar z_1$ means the Euclidean  inner product in $\C^{k_\me}$ between $z$ and $z_1$}),
which is equivalent to the canonical one. {The associated norm will be denoted by   $\|\cdot \|$, because no confusion is possible with the Euclidean and matrix norms introduced before.}

Of course, if $M_\me$ is diagonal, then \autoref{assum}.(\ref{assumMe}) is satisfied e.g.\ whenever  $Q_\me(x)$ is the identity for all $x$; however, \autoref{exa:saintv} shows that $Q_\me M_\me$ may be Hermitian even when $M_\me$ is not. It thus turns out that such an alternative inner product is tailor-made for the class of hyperbolic systems we are considering. The main reason for restricting to the Hilbert space setting is that checking dissipativity in $L^p$-spaces is less immediate.

\subsection{Transmission conditions in the vertices}
In order to tackle the problem of determining the correct transmission conditions on $\mathcal A$, let us first introduce  the maximal domain
\begin{equation}\label{eq:maxbc-2-true}
D_{\max}:=\bigoplus_{\me\in \mE} \{u\in L^2(0,\ell_\me)^{k_\me}:M_{\me}u'\in L^2(0,\ell_\me)^{k_\me}\}.
\end{equation}
We want to explicitly state the following, whose easy proof we leave to the reader. Recall that invertibility of $M_\me(x)$ is   assumed for all $x\in [0,\ell_\me]$.
\begin{lemma}\label{lem:cr}
It holds
\begin{equation}\label{eq:maxbc-2}
D_{\max}=\bigoplus_{\me\in \mE}H^1(0,\ell_\me)^{k_\me},
\end{equation}
and therefore $D_{\max}$ is densely and  compactly embedded in $\bL^2(\mathcal G)$.
\end{lemma}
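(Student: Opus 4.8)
The plan is to establish the edgewise identity $\{u\in L^2(0,\ell_\me)^{k_\me}:M_\me u'\in L^2\}=H^1(0,\ell_\me)^{k_\me}$ by proving both inclusions, and then to read off the embedding statement from one-dimensional Sobolev theory. Taking the direct sum over the finite set $\mE$ at the end causes no difficulty, so I would work on a single fixed edge throughout and drop the subscript $\me$ where convenient.

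The inclusion $\supseteq$ is immediate: by \autoref{assum}.(1) the matrix function $M_\me$ is Lipschitz, hence bounded on the compact interval $[0,\ell_\me]$, so $u'\in L^2$ already forces $M_\me u'\in L^2$. For the reverse inclusion I would first record that $M_\me^{-1}$ is itself Lipschitz: continuity of $x\mapsto M_\me(x)$ on the compact $[0,\ell_\me]$ together with pointwise invertibility makes $\det M_\me$ continuous and nowhere zero, hence bounded away from $0$, so by Cramer's rule $M_\me^{-1}=(\det M_\me)^{-1}\,\mathrm{adj}(M_\me)$ is a product of Lipschitz functions (the entries of $\mathrm{adj}(M_\me)$ are polynomials in those of $M_\me$, and $(\det M_\me)^{-1}$ is Lipschitz since $\det M_\me$ is Lipschitz and bounded below). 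Now, given $u\in L^2$ with $M_\me u'\in L^2$, the distributional Leibniz rule — valid because $M_\me\in W^{1,\infty}$ — gives $(M_\me u)'=M_\me' u+M_\me u'$; here $M_\me' u\in L^2$ since $M_\me'\in L^\infty$ and $u\in L^2$, while $M_\me u'\in L^2$ by hypothesis, whence $(M_\me u)'\in L^2$ and $M_\me u\in H^1$. Multiplying back, $u=M_\me^{-1}(M_\me u)\in H^1$, since multiplication by the $W^{1,\infty}$ matrix $M_\me^{-1}$ maps $H^1$ into itself. This proves \eqref{eq:maxbc-2}.

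For the final assertion, density of $H^1(0,\ell_\me)$ in $L^2(0,\ell_\me)$ holds because both spaces contain $C_c^\infty(0,\ell_\me)$ densely, and compactness of the embedding $H^1(0,\ell_\me)\hookrightarrow L^2(0,\ell_\me)$ is the Rellich--Kondrachov theorem; alternatively, in one dimension it follows from $H^1(0,\ell_\me)\hookrightarrow C[0,\ell_\me]$ together with Arzel\`a--Ascoli, bounded sets in $H^1$ being equibounded and uniformly $\tfrac12$-H\"older. Both density and compactness are preserved under the finite direct sum $\bigoplus_{\me\in\mE}H^1(0,\ell_\me)^{k_\me}$, which yields the dense and compact embedding into $\bL^2(\mathcal G)$ and completes the proof.

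The only genuinely delicate point is the distributional Leibniz rule for the merely Lipschitz coefficient $M_\me$ together with the stability of $H^1$ under multiplication by $M_\me^{-1}$; both are standard and can be obtained by a routine mollification of $M_\me$ and $M_\me^{-1}$ in $W^{1,\infty}$, so I would state them without dwelling on the verification — consistent with the authors' remark that the proof is easy.
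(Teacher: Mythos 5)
The paper gives no proof of this lemma at all --- it is explicitly ``left to the reader'' --- and your argument is the expected one and is correct: the Leibniz rule $(M_\me u)'=M_\me'u+M_\me u'$ (legitimate for $M_\me\in W^{1,\infty}$, and indeed the natural way to interpret $M_\me u'$ for $u\in L^2$ in the first place) yields $M_\me u\in H^1$, Cramer's rule gives $M_\me^{-1}\in W^{1,\infty}$ since $\det M_\me$ is continuous and nowhere zero on a compact interval, hence bounded away from zero, and then $u=M_\me^{-1}(M_\me u)\in H^1$, after which density and compactness follow from one-dimensional Sobolev theory and pass to the finite direct sum. One small correction to your wording: $C_c^\infty(0,\ell_\me)$ is \emph{not} dense in $H^1(0,\ell_\me)$ (its $H^1$-closure is $H^1_0(0,\ell_\me)$), but your density conclusion only uses the inclusion $C_c^\infty(0,\ell_\me)\subset H^1(0,\ell_\me)$ together with density of $C_c^\infty(0,\ell_\me)$ in $L^2(0,\ell_\me)$, so the proof stands as written.
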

We stress that compactness of the embedding can actually fail if 0 is an eigenvalue of $M_\me(\cdot)$ at the endpoints of $(0,\ell_\me)$: to see this, take over -- with obvious changes -- the proof of \cite[Lemma~4.2]{KurMugWol19}.

Let us see why we have chosen to define an alternative inner product on $\bL^2(\mathcal G)$. Under our standing assumption $Q_\me(x) M_\me(x)$ is for  all $x$ a Hermitian matrix, so it can be diagonalized -- although these matrices need not commute, so they will in general not be simultaneously diagonalizable. If however there exists a diagonal matrix $D_\me$ such that 
\begin{equation}\label{eq:DMQ}
D_\me =Q_\me(x) M_\me(x)\quad\hbox{ for all }x,
\end{equation}
then the semigroup generated by $M_\me\frac{d }{d x}$ on $\bL^2(\mathcal G)$ with respect  to the inner product in~\eqref{eq:prod?} agrees with the semigroup generated by $Q_\me(\cdot)M_\me(\cdot)\frac{d }{d x}=D_\me\frac{d }{d x}$ on $\bL^2(\mathcal G)$ with respect to the canonical inner product: the latter one is simply the shift semigroup, up to taking into account the boundary conditions, cf.~\cite [Prop.~3.3]{Dor08b}  or \cite[Prop.~18.7]{BatKraRha17} for a special case  where $D_\me = I$. So, the complete operator $\mathcal A$ will generate a semigroup $(e^{t\mathcal A})_{t \ge 0}$ that can be semi-explicitly written down by means of the Dyson--Phillips Series, since the perturbation $Q_\me N_\me$ is bounded,
see \cite[Prop.\ 3.1.2, p. 77]{Paz83} or \cite[Thm.\ 1.10]{EngNag00}; and also by means of Trotter's Product Formula, see~\cite[Exer.~III.5.11]{EngNag00}.

Even when~\eqref{eq:DMQ} does not hold we are still in a commendable situation:  indeed,
\beq\label{identityA}
\left(\mathcal A u, v\right)&=&
\sum_{\me\in\mE}\int_0^{\ell_\me}
Q_\me  \left( M_\me u'_\me+ N_\me  u_\me \right)\cdot \bar v_\me \,dx\qquad\hbox{for all }u,v\in D_{\max}.
\eeq
Due to our standing assumptions, $Q_\me M_\me $ and hence its space derivative are hermitian: integrating by parts we hence find
\begin{equation}\label{eq:big-int-by-p}
\begin{split}
\sum_{\me\in\mE}\int_0^{\ell_\me}
Q_\me   M_\me u'_\me\cdot \bar v_\me \,dx
&=
-\sum_{\me\in\mE}\int_0^{\ell_\me}
  u_\me \cdot   
  \left(\overline{Q_\me M_\me  v_\me} \right)' \,dx+\sum_{\me\in\mE} \left[   Q_\me M_\me u_\me \cdot \bar v_\me \right]\Big|_0^{\ell_\me}.
\\
&=
-\sum_{\me\in\mE}\int_0^{\ell_\me}
  u_\me \cdot
\overline{Q_\me M_\me  v'_\me} \,dx -\sum_{\me\in\mE}\int_0^{\ell_\me}
  u_\me \cdot   
  \overline{\left( Q_\me M_\me  \right)'   v_\me}  \,dx
  \\
&\qquad +\sum_{\me\in\mE} \left[ Q_\me   M_\me   u_\me \cdot \bar v_\me \right]\Big|_0^{\ell_\me}
\end{split}
\end{equation}
Now, for $u=v$ \eqref{eq:big-int-by-p} can be equivalently written as
\begin{equation}\label{ibp}
\begin{split}
2\Re\sum_{\me\in\mE}\int_0^{\ell_\me}
Q_\me   M_\me u'_\me\cdot \bar u_\me \,dx
&=-\sum_{\me\in\mE}\int_0^{\ell_\me}
 \left( Q_\me M_\me\right)'   u_\me \cdot   
     \bar u_\me  \,dx +\sum_{\me\in\mE} \left[ Q_\me   M_\me   u_\me \cdot \bar u_\me \right]\Big|_0^{\ell_\me};
\end{split}
\end{equation}
both addends on the right hand side are real, in view of our standing assumptions on $Q_\me, M_\me$.

For all $\mv\in \mV$, let us denote by $\mE_\mv$,  the set of all edges incident in $\mv$. We introduce for each $\mv\in\mV$
 the trace operator $\gamma_\mv:\bigoplus_{\me\in\mE}H^1(0,\ell_\me)^{k_\me}\to \C^{k_\mv}$ defined by \[
\gamma_\mv(u)= 
\left(u_\me(\mv)\right)_{\me\in\mE_\mv},\qquad \mv\in\mV,
\]
where $k_\mv:=\sum_{\me\in\mE_\mv} k_\me$, and the $k_\mv\times k_\mv$ block-diagonal matrix $T_\mv$ with $k_\me\times  k_\me$  diagonal blocks 
\begin{equation}
\label{eq:defTv}
T_\mv:= 
\diag \left( Q_\me(\mv) M_\me(\mv) {\iota}_{\mv \me}
\right)_{\me\in\mE_\mv},\qquad \mv\in\mV,
\end{equation}
where we recall that the $|\mV|\times |\mE|$ \textit{incidence matrix} $\mathcal I=(\iota_{\mv\me})$ of the graph $\mG$ is defined by
\begin{equation}\label{eq:incidence}
{\iota}_{\mv \me}:=\left\{
\begin{array}{ll}
-1 & \hbox{if } \mv \hbox{ is initial endpoint of } \me, \\
+1 & \hbox{if } \mv \hbox{ is terminal endpoint of } \me, \\
0 & \hbox{otherwise.}
\end{array}\right.
\end{equation}
With these notation, we see that the identity \eqref{ibp}
is equivalent to
\begin{equation}\label{eq:ibpequiv}
\begin{split}
2\Re\sum_{\me\in\mE}\int_0^{\ell_\me}
Q_\me   M_\me u'_\me\cdot \bar u_\me \,dx
&=
-\sum_{\me\in\mE}\int_0^{\ell_\me}
 \left( Q_\me M_\me\right)' u_\me \cdot   
       \bar u_\me  \,dx   \\
&\quad +
\sum_{\mv\in\mV}  T_\mv \gamma_\mv(u)\cdot\gamma_\mv (\bar u).
\end{split}
\end{equation}
Taking the real part of \eqref{identityA} and using the last identity we find
\begin{equation}\label{eq:Adiss}
\begin{split}
\Re\left(\mathcal A u, u\right)
&=
\Re\sum_{\me\in\mE}\int_0^{\ell_\me}
 Q_\me  N_\me  u_\me \cdot \bar u_\me \,dx\\
&\quad -\frac{1}{2}  \sum_{\me\in\mE}\int_0^{\ell_\me}
  \left( Q_\me M_\me\right)' u_\me \cdot      \bar u_\me  \,dx
+\frac{1}{2}\sum_{\mv\in\mV}  T_\mv \gamma_\mv(u)\cdot\gamma_\mv(\bar u).
\end{split}
\end{equation}

The boundary terms vanish if so does $\gamma_\mv(u)$ for all $\mv\in\mV$; however, { 
upon introducing the quadratic form
\begin{equation}\label{eq:qv-def}
q_\mv(\xi ):=T_\mv \xi \cdot \bar \xi,\qquad  \xi\in \C^{k_\mv},
\end{equation}
it is more generally sufficient to impose that  $\gamma_\mv(u)$ belongs to a \textit{totally isotropic subspace  $Y_\mv$
associated with $q_\mv$}, i.e., to a vector space $Y_\mv$ such that the restriction of $q_\mv$ to $Y_\mv$ vanishes identically, see \autoref{def:nonpositiveisocone}.}
  (Observe that $q_\mv(\xi)\in \R$, due to our standing assumptions on $Q_\me, M_\me$.)
This means that it suffices to assume that
\begin{equation}\label{eq:mainbc}
\gamma_\mv(u) \in Y_\mv\quad \hbox{for all } \mv\in \mV.
\end{equation}

\begin{rem}
Introducing
\[
\omega(f,g):=\sum_{\me\in\mE}\int_0^{\ell_\me}
Q_\me   M_\me u'_\me\cdot \bar v_\me \,dx,\qquad f,g\in D_{\min},
\]
defines a skew-symmetric form in the sense of~\cite[Def.~2.2]{SchSeiVoi15}, where $D_{\min}:=\bigoplus_{\me\in \mE}H^1_0(0,\ell_\me)^{k_\me}$. All skew-adjoint extensions of $\mathcal A_{|D_{\min}}$ can be then parametrized by~\cite[Thm.~3.6]{SchSeiVoi15}. We are however rather interested in the general case of possibly variable coefficients and therefore prefer to pursue an approach based on the classical Lumer--Phillips Theorem. 
\end{rem}
 
\begin{exa}\label{ex:Maxwell}
For the system  \eqref{syst2time2} with the choice of $Q_\me$ from  {\eqref{eq:Qhypsys} in which we take $a=P$, $b=c=0$, $d=L$,} 
the matrix $Q_\me M_\me$ is given by
\[
Q_\me M_\me =-\left(\begin{array}{ll}
0&PL\\
LP&0
\end{array}
\right)
\]
and therefore, with {$u_\me=(p_\me,q_\me)^\top$}, the expression $Q_\me   M_\me   u_\me \cdot \bar u_\me $ takes the form
\[
Q_\me   M_\me   u_\me \cdot \bar u_\me =
-2 \Real  (PL q_\me \bar p_\me).
\]
One may e.g.\ consider the vertex transmission conditions (see \cite[p. 56]{Nic:2017})
\begin{itemize}
\item $p$ is continuous across the vertices and $\sum_{\me\in \mE_\mv} q_\me(\mv){\iota}_{\mv \me}=0$ for all $\mv\in \mV$; or
\item $q$ is continuous across the vertices and $\sum_{\me\in \mE_\mv} p_\me(\mv){\iota}_{\mv \me}=0$ for all $\mv\in \mV$.
\end{itemize}
They both fit to our framework. Indeed, if for simplicity we write
$\gamma_\mv(u)=((p_\me(\mv))_{\me\in \mE_\mv},(q_\me(\mv))_{\me\in \mE_\mv})^\top$, then
in the first case it suffices to take
\[
Y_\mv=\lin \{{\mathbf 1}_{\mE_\mv} \} \oplus \lin \{ \iota_{\mE_\mv}\}^\perp,
\]
{where ${\mathbf 1}_{\mE_\mv}:= (1,\dots,1)\in\C^{|\mE_\mv|}$ and $\iota_{\mE_\mv}$ is the vector in $\C^{|\mE_\mv|}$ whose $\me$-th entry equals  $\iota_{\mv\me}$, the appropriate nonzero entry of the incidence matrix defined in \eqref{eq:incidence}. Observe that  \eqref{eq:mainbc} now yields that the values  $p_\me(\mv)$ coincide for all $\me\in\mE_\mv$ while the vectors $(q_\me(\mv))_{\me\in \mE_\mv}$ and $\iota_{\mE_\mv}$ are orthogonal.
To cover the second set of transmission conditions we on the contrary let}
\[
Y_\mv= \lin\{ \iota_{\mE_\mv}\}^\perp \oplus \lin\{ {\mathbf 1}_{\mE_\mv}\}.
\]
\end{exa}

Before proving our first well-posedness result, we reformulate the condition \eqref{eq:mainbc} for constant vector fields $u_\me$. 
Namely, if we assume that 
$u_\me\equiv K_\me\in \C^{k_\me}$ for all edges $\me$,  
\eqref{eq:mainbc} is equivalent to
\begin{equation}\label{eq:mainbcequiv00}
(K_\me)_{\me\in \mE_\mv} \in Y_\mv\quad \hbox{for all } \mv\in \mV.
\end{equation}
Denoting   $I_\mv:=\{1,2,\ldots, \dim Y_\mv^\perp\}$ and fixing a basis $\{\mw^{(\mv, i)}\}_{i\in I_\mv}$ of $Y_\mv^\perp\subset \C^{k_\mv}$, \eqref{eq:mainbcequiv00} is equivalent to
\be\label{eq:mainbcequiv1}
(K_\me)_{\me\in \mE_\mv}\cdot \overline{\mw^{(\mv, i)}}=0\quad \hbox{for all } i\in I_\mv, \mv\in \mV.
\ee

To write this in a global way, we first let $k:=\sum_{\me\in\mE} k_\me $. 
Now recall that each $\mw^{(\mv, i)}$ is an element of $\C^{k_\mv}$, hence it can be identified with
the vector 
$(\mw^{(\mv, i)}_\me)_{\me\in \mE_\mv}$. We denote by 
$\widetilde \mw^{(\mv, i)}\in\C^k$ its extension to the whole set of edges, namely,
\be\label{eq:wtilde}
\widetilde \mw^{(\mv, i)}_\me :=
\left\{
\begin{array}{ll}
\mw^{(\mv, i)}_\me, &\hbox{ if } \me\in \mE_\mv,\\
0 &\hbox{ else. }
\end{array}
\right.
\ee
With this notation we see that \eqref{eq:mainbcequiv1}, hence also \eqref{eq:mainbc} in this case, is equivalent to,
\be\label{eq:mainbcequiv2}
(K_\me)_{\me\in \mE}\cdot \overline{\widetilde \mw^{(\mv, i)}}=0\quad \hbox{for all } i\in I_\mv, \mv\in \mV.
\ee
In the same way each coordinate of an element of $Y_\mv, Y_{\mv}^\perp\subset \C^{k_\mv}$ corresponds to some $\me\in E_{\mv}$ and as above we can extend these spaces to $\C^k$ by putting a $0$ to the  coordinate corresponding to  $\me$ whenever $\me\notin E_\mv$. Denote these extensions  by  $\widetilde Y_\mv,$ and $\widetilde{Y_{\mv}^\perp}$, respectively. 

\begin{lemma}\label{lem:basis}
 The set $\{\widetilde \mw^{(\mv, i)}\}_{i\in I_\mv, \mv\in \mV}$ is a basis of $\C^{k}$
 if and only if
\begin{equation}\label{eq:basis} 
 \dim \sum_{\mv\in \mV} \widetilde {Y_{\mv}^\perp}= k = \sum_{\mv\in \mV}  \dim Y_\mv.
 \end{equation}
\end{lemma}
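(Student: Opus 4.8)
The plan is to reduce the statement to the elementary criterion that a family of vectors in $\C^k$ is a basis if and only if it spans $\C^k$ and consists of exactly $k$ vectors, and then to identify each of these two requirements with one of the two equalities in \eqref{eq:basis}. First I would note that the zero-extension map $\C^{k_\mv}\to\C^k$ defined in \eqref{eq:wtilde} is linear and injective, so that $\{\widetilde\mw^{(\mv,i)}\}_{i\in I_\mv}$ is a basis of $\widetilde{Y_\mv^\perp}$; in particular $\dim\widetilde{Y_\mv^\perp}=\dim Y_\mv^\perp=|I_\mv|$. Summing over $\mv$, the full family $\{\widetilde\mw^{(\mv,i)}\}_{i\in I_\mv,\,\mv\in\mV}$ has $\sum_{\mv\in\mV}\dim Y_\mv^\perp$ members and its linear span is precisely $\sum_{\mv\in\mV}\widetilde{Y_\mv^\perp}$. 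Hence the family spans $\C^k$ exactly when $\dim\sum_{\mv\in\mV}\widetilde{Y_\mv^\perp}=k$, which is the first equality in \eqref{eq:basis}.

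The only combinatorial ingredient is a handshake-type identity. Summing $k_\mv=\sum_{\me\in\mE_\mv}k_\me$ over all $\mv\in\mV$ and exchanging the order of summation, each edge $\me$ contributes $k_\me$ once for each of its two endpoints, so that $\sum_{\mv\in\mV}k_\mv=2\sum_{\me\in\mE}k_\me=2k$. On the other hand, $\dim Y_\mv+\dim Y_\mv^\perp=k_\mv$ since $Y_\mv\subset\C^{k_\mv}$; summing this over $\mv$ and invoking the handshake identity yields $\sum_{\mv\in\mV}\dim Y_\mv+\sum_{\mv\in\mV}\dim Y_\mv^\perp=2k$. Therefore $\sum_{\mv\in\mV}\dim Y_\mv=k$ holds if and only if $\sum_{\mv\in\mV}\dim Y_\mv^\perp=k$, that is, if and only if the family has exactly $k$ members.

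Combining the two paragraphs, the family is a basis of $\C^k$ if and only if it spans $\C^k$ and has cardinality $k$, and by the above these two conditions are exactly $\dim\sum_{\mv\in\mV}\widetilde{Y_\mv^\perp}=k$ and $\sum_{\mv\in\mV}\dim Y_\mv=k$, which together constitute \eqref{eq:basis}. I expect the only delicate point to be the handshake identity $\sum_{\mv\in\mV}k_\mv=2k$: one must ensure each edge is counted at both of its endpoints, and bear in mind that the extensions $\widetilde{Y_\mv^\perp}$ attached to distinct vertices may overlap on shared edges, so that the spanning condition is genuinely independent of the counting condition and neither equality in \eqref{eq:basis} may be dropped.
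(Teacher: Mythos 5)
Your proof is correct and follows essentially the same route as the paper's: both reduce the basis property to the two dimension equalities by observing that $\{\widetilde \mw^{(\mv, i)}\}_{i\in I_\mv}$ is a basis of $\widetilde{Y_\mv^\perp}$, and both convert $\sum_{\mv\in\mV}\dim Y_\mv^\perp$ into $\sum_{\mv\in\mV}\dim Y_\mv$ via the hand-shaking identity $\sum_{\mv\in\mV}k_\mv=2k$ together with $\dim Y_\mv+\dim Y_\mv^\perp=k_\mv$. The only cosmetic difference is that you invoke the criterion that a spanning family of exactly $k$ vectors in $\C^k$ is a basis, where the paper phrases the same fact through the direct-sum characterization $\dim\sum_{\mv\in\mV}\widetilde{Y_\mv^\perp}=\sum_{\mv\in\mV}\dim\widetilde{Y_\mv^\perp}=k$.
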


\begin{proof} 
By construction, $\{\widetilde \mw^{(\mv, i)}\}_{i\in I_\mv}$ is a basis of $\widetilde {Y_{\mv}^\perp}$.
Therefore, $\{\widetilde \mw^{(\mv, i)}\}_{i\in I_\mv, \mv\in \mV}$ is a basis of $\C^{k}$ if and only if  
\[\sum_{\mv\in\mV} \widetilde {Y_{\mv}^\perp} = \bigoplus_{\mv\in\mV} \widetilde {Y_{\mv}^\perp} = \C^{k} \]
which is further equivalent to the dimensions condition
\begin{equation}\label{eq:dim}
 \dim\sum_{\mv\in\mV} \widetilde {Y_{\mv}^\perp}= \sum_{\mv\in\mV} \dim\widetilde {Y_{\mv}^\perp} = k. 
 \end{equation}
Now, observe that $ \dim\widetilde {Y_{\mv}^\perp} =  \dim{Y_{\mv}^\perp} = k_\mv -  \dim Y_{\mv}$. Moreover, by the hand-shaking lemma, $ \sum_{\mv\in\mV} k_\mv = 2 k$, hence
\[  \sum_{\mv\in\mV} \dim\widetilde {Y_{\mv}^\perp} = 2k -  \sum_{\mv\in\mV} \dim Y_{\mv}.\]
Plugging this into \eqref{eq:dim} yields \eqref{eq:basis}.
\end{proof}

\begin{rem}\label{rem-dim}
The equivalent assertions in \autoref{lem:basis} mean that 
 the number of boundary conditions in \eqref{eq:mainbc} 
 (that is equivalent to \eqref{eq:mainbcequiv2} in the special case) 
 is exactly equal to $k$  and that these  boundary conditions are linearly independent. Furthermore, as  the support of the vector $\widetilde \mw^{(\mv, i)}$ corresponds to the set of the edges incident to $\mv$, the vectors $\widetilde \mw^{(\mv, i)}$ and $\widetilde \mw^{(\mv', i')}$, and hence also the subspaces  $\widetilde {Y_{\mv}^\perp}$ and $\widetilde {Y_{\mv'}^\perp}$, are linearly independent if $\mv$ and $\mv'$ are not adjacent. 
 However, the first equality in \eqref{eq:basis} is equivalent to the mutual linear independence of all  $\widetilde {Y_{\mv}^\perp}$, that is,
\begin{equation}\label{eq:independent-Y} 
\widetilde{ Y_{\mv}^\perp} \cap \sum_{\mv' \ne \mv}\widetilde{ Y_{\mv'}^\perp}=\{0\}\quad\text{for all }\mv\in\mV.
\end{equation}
 \end{rem}

We are finally in the position to formulate a well-posedness result in terms of the transmission conditions in~\eqref{eq:mainbc}.
\begin{theo}\label{prop:main1}
For all $\mv\in \mV$, let $Y_\mv$ be  a
totally isotropic subspace    
associated  with the quadratic form $q_\mv$ defined by~\eqref{eq:qv-def}
and assume that \eqref{eq:basis} holds.
Then both $\pm\mathcal A$,  defined {as \eqref{eq:maxop} }on the domain
\begin{equation}\label{eq:maxop-dom}
\begin{split}
D(\mathcal A):=\left\{
u\in D_{\max}:  \gamma_\mv(u)\in Y_\mv\hbox{ for all }\mv\in\mV
\right\},
\end{split}
\end{equation}
are quasi-$m$-dissipative operators.
In particular, both $\pm \mathcal A$ generate a strongly continuous semigroup and hence a strongly continuous group in $\bL^2(\mathcal G)$. {The operator $\mathcal A$ has compact resolvent, hence pure point spectrum.}
\end{theo}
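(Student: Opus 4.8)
The plan is to apply the Lumer--Phillips theorem to both $\pm\mathcal A$: the task thus splits into checking quasi-dissipativity and a range condition, after which group generation and the spectral claim follow routinely.

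For quasi-dissipativity I would simply read off the pre-computed identity \eqref{eq:Adiss}. Its boundary term equals $\frac12\sum_{\mv\in\mV} q_\mv(\gamma_\mv(u))$, because $T_\mv\gamma_\mv(u)\cdot\gamma_\mv(\bar u)=q_\mv(\gamma_\mv(u))$; for every $u\in D(\mathcal A)$ this vanishes, precisely since each $Y_\mv$ is totally isotropic for $q_\mv$ and $\gamma_\mv(u)\in Y_\mv$. The two bulk terms involve $Q_\me N_\me$ and $(Q_\me M_\me)'$, both essentially bounded by \autoref{assum} ($N_\me\in L^\infty$, and $Q_\me$, $Q_\me M_\me$ Lipschitz); together with the norm equivalence furnished by the uniform positive definiteness of $Q_\me$, each is dominated by $C\|u\|^2$. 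Hence $\Re(\mathcal A u,u)\le\omega\|u\|^2$ for some $\omega$; since both bulk terms merely change sign for $-\mathcal A$ while the boundary term still vanishes, the same estimate holds for $-\mathcal A$, so both operators are quasi-dissipative.

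For the range condition I would fix $\lambda>\omega$ and solve $(\lambda-\mathcal A)u=f$ edgewise. On each edge this is the linear ODE $u_\me'=M_\me^{-1}\bigl[(\lambda-N_\me)u_\me-f_\me\bigr]$ with bounded Lipschitz coefficients; variation of constants gives $u_\me=\Phi_\me(\cdot)c_\me+v_\me^f$ with $\Phi_\me$ an invertible fundamental matrix, $c_\me\in\C^{k_\me}$ free, and a particular solution $v_\me^f\in H^1(0,\ell_\me)^{k_\me}$ depending linearly on $f$. Imposing $\gamma_\mv(u)\in Y_\mv$ at every vertex amounts to a linear system $B(\lambda)c=b(\lambda,f)$ in $c=(c_\me)_{\me\in\mE}\in\C^{k}$; the number of scalar constraints is $\sum_{\mv}\dim Y_\mv^\perp=2k-\sum_{\mv}\dim Y_\mv=k$ by the hand-shaking lemma and \eqref{eq:basis}, so $B(\lambda)$ is a \emph{square} $k\times k$ matrix. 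Its kernel corresponds to the case $f=0$, whose solutions are exactly $\ker(\lambda-\mathcal A)=\{0\}$ for $\lambda>\omega$; since each $\Phi_\me$ is invertible this forces $c=0$, so $B(\lambda)$ is injective, hence invertible. Thus $\lambda-\mathcal A$ is surjective, and the same argument applies to $-\mathcal A$. By Lumer--Phillips both $\pm\mathcal A$ are quasi-$m$-dissipative and generate $C_0$-semigroups, and since both do, $\mathcal A$ generates a $C_0$-group. For the resolvent, $M_\me u_\me'=(\lambda-N_\me)u_\me-f_\me$ together with boundedness of $M_\me^{-1},N_\me$ yields $\|u\|_{H^1}\le C\|f\|$, so $R(\lambda)=(\lambda-\mathcal A)^{-1}$ maps $\bL^2(\mathcal G)$ boundedly into $D_{\max}$; composing with the compact embedding $D_{\max}\hookrightarrow\bL^2(\mathcal G)$ of \autoref{lem:cr} makes $R(\lambda)$ compact, so $\mathcal A$ has pure point spectrum.

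The step I expect to be the real obstacle is the range condition. Its crux is recognizing that the reduction to a finite-dimensional system is \emph{exactly square} --- this is exactly what the dimension bookkeeping behind \autoref{lem:basis} and \eqref{eq:basis}, combined with the hand-shaking lemma, delivers --- so that the injectivity already granted by quasi-dissipativity upgrades for free to invertibility of $B(\lambda)$, and hence to surjectivity. By contrast, dissipativity and compactness are little more than a direct reading of \eqref{eq:Adiss} and \autoref{lem:cr}.
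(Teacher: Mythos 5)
Your proposal is correct, and its skeleton coincides with the paper's: Lumer--Phillips, quasi-dissipativity read off from \eqref{eq:Adiss} with the boundary term killed by total isotropy of the $Y_\mv$, reduction of the range condition to a square $k\times k$ linear system by solving the ODE edgewise, and compact resolvent via \autoref{lem:cr}. The maximality step, however, is genuinely different, and the difference matters. The paper first removes $N$ by the Bounded Perturbation Theorem and then solves $\mathcal A u=\mf$ \emph{at} $\lambda=0$ by direct integration, $u_\me=K_\me+\int_0^x M_\me^{-1}\mf_\me$; at $\lambda=0$ dissipativity gives no injectivity, so unique solvability of the resulting system in $(K_\me)_{\me\in\mE}$ must come from the basis property of \autoref{lem:basis}, i.e.\ from the \emph{full} strength of \eqref{eq:basis}, including the linear independence \eqref{eq:independent-Y} of the spaces $\widetilde{Y_\mv^\perp}$ (and, strictly speaking, surjectivity at $\lambda=0$ is not yet the Lumer--Phillips range condition when the quasi-dissipativity bound $\omega$ is positive; the paper implicitly bridges this via the compact resolvent and discreteness of the spectrum). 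You instead keep $N$, work directly at $\lambda>\omega$ with a fundamental matrix $\Phi_\me$, and get injectivity of $B(\lambda)$ \emph{for free} from quasi-dissipativity, since $\ker B(\lambda)$ parametrizes $\ker(\lambda-\mathcal A)=\{0\}$; squareness only needs the dimension count $\sum_\mv\dim Y_\mv=k$ together with the hand-shaking lemma. So your argument needs no transfer of invertibility from $0$ to large $\lambda$, and in fact proves the statement under a hypothesis weaker than \eqref{eq:basis} (the count alone) --- very much in the spirit of \autoref{thm:new-for-dirac}, which the paper proves separately precisely to cover examples, such as the Dirac operator of \autoref{sec:dirac}, where the independence half of \eqref{eq:basis} fails. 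What the paper's route buys in exchange is an explicit inverse at $\lambda=0$ and a transparent link between well-posedness and the combinatorial condition of \autoref{lem:basis}. Two small points you should still make explicit: density of $D(\mathcal A)$ (immediate, since $\bigoplus_{\me\in\mE}\mathcal D(0,\ell_\me)^{k_\me}\subset D(\mathcal A)$), and the bootstrap showing the variation-of-constants solution indeed lies in $H^1$ on each edge.
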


\begin{proof}
Under \autoref{assum}.(2), $u\mapsto Nu$ is a bounded perturbation of $\mathcal A$. {By the Bounded Perturbation Theorem (cf.~\cite[Thm.~III.1.3]{EngNag00}), we may
without loss of generality in this proof assume} that $N=0$. Under this assumption and 
in view of~\eqref{eq:Adiss} and the definition of $D(\mathcal A)$, we see that 
\beq\label{eq:Adiss3}
\Re\left(\pm\mathcal A u, u\right)
=
\mp \frac{1}{2}  \sum_{\me\in\mE}\int_0^{\ell_\me}
  \left( Q_\me M_\me  \right)' u_\me \cdot \bar u_\me  \,dx
\eeq
for all $u\in D(\mathcal A)$. By the assumptions made on the matrices $Q_\me,M_\me$, we deduce that there exists a positive constant $C$ such that
\be\label{eq:Adiss2}
|\Re\left(\mathcal A u, u\right)|
\leq  \frac12  
\left(\max_{\me\in\mE} \sup_{x\in (0,\ell_\me)} \| \left(Q_\me M_\me  \right)'(x)\|_2\right)
\sum_{\me\in\mE}\int_0^{\ell_\me}
\|u_\me(x)\|_2^2\, dx \leq C\|u\|^2,
\ee
for all $u\in D(\mathcal A)$
which means that   $\pm \mathcal A$ with domain $D(\mathcal A)$ are both quasi-dissipative. 
By the Lumer--Phillips Theorem, it remains to check their maximality. To this aim, for any $\mf\in \bL^2(\mathcal G)$, we first look for a solution $u\in D(\mathcal A)$ of
\be\label{eq:sergepb1}
M_\me(x)u'_\me(x)=\mf_\me(x)\quad \hbox{for $x\in (0,\ell_\me)$ and all } \me\in \mE.
\ee
Such a solution is given by
\be\label{eq:varpar}
u_\me(x)=K_\me+\int_0^xM^{-1}_\me(y)\mf_\me(y)\,dy,
\hbox{for all } x\in [0,\ell_\me],  \hbox{for all } \me\in \mE,
\ee
with $K_\me\in \C^{k_\me}$.
It then remains to fix the vectors $K_\me$ in order to enforce the condition $u_\me\in D(\mathcal A)$. Since \eqref{eq:mainbc} is in our situation a $k_\me\times k_\me$ linear system 
in $(K_\me)_{\me\in \mE}$, the existence of this vector is equivalent to its uniqueness. By the previous considerations, this means that it suffices to show that system
\eqref{eq:mainbcequiv2} has the sole solution $K_\me=0$, which holds due to our assumption \eqref{eq:basis}  in \autoref{lem:basis}.
This shows that the operator
$\mathcal A$  
is an isomorphism from $D(\mathcal A)$ into $\bL^2(\mathcal G)$ and proves that $\pm\mathcal{A}$ is maximal.

To conclude, we observe that  \autoref{lem:cr} directly implies that $\mathcal A$ has compact resolvent, since $D(\mathcal A)$ is continuously embedded in $D_{\max}$.
\end{proof}

\begin{exa}
Imposing Dirichlet conditions on \textit{all} endpoints is a possibility allowed for by our formalism, taking 
\[
\gamma_\mv (u)\in \left\{( 0, \dots, 0)^\top\right\}=:Y_\mv,\qquad \mv\in\mV.
\]
However, our dimension condition rules it out, as in this case $\dim Y_\mv=0$ for all $\mv$, hence~\eqref{eq:basis} is not satisfied.
\end{exa}

By the Lumer--Phillips Theorem and~\eqref{eq:ibpequiv}, the semigroup generated by $\mathcal A$ is isometric if and only if
\begin{equation}\label{eq:isometry}
\Re\sum_{\me\in\mE}\int_0^{\ell_\me}
\left (Q_\me  N_\me -\frac{1}{2}   ( Q_\me M_\me)'  \right) u_\me \cdot \bar u_\me  \,dx=0\qquad\hbox{for all }u\in D(\mathcal A).
\end{equation}
As the next result shows, this condition is  easy to characterize using Lemma \ref{l:appendixB}.

\begin{cor}\label{cor:main1}
Under the assumptions of \autoref{prop:main1},
\be\label{cond:iso}
 Q_\me  N_\me+(Q_\me  N_\me)^\ast=  
\left(Q_\me  M_\me  \right)'
  \ee
 if and only if the system \eqref{eq:max1} on $\mathcal G$ with transmission conditions~\eqref{eq:mainbc} is governed by  a unitary group on $\bL^2(\mathcal G)$;  in particular, the energy
\begin{equation}\label{eq:energydef}
\mathcal E(t):=\frac12 \sum_{\me\in\mE}\int_0^{\ell_\me} Q_\me  u(t)\cdot \bar u(t)\; dx,\qquad t\in \R,
\end{equation}
is conserved.
\end{cor}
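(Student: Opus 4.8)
The plan is to characterise unitarity of the group through the vanishing of $\Re(\mathcal A u,u)$ and then to convert the resulting integral identity into the pointwise algebraic condition \eqref{cond:iso} via Lemma~\ref{l:appendixB}. First I would recall that, under the standing hypotheses, \autoref{prop:main1} already yields a $C_0$-group $(e^{t\mathcal A})_{t\in\R}$ on $\bL^2(\mathcal G)$. A $C_0$-group is unitary exactly when it consists of isometries, and for $u\in D(\mathcal A)$ one has $\frac{d}{dt}\|e^{t\mathcal A}u\|^2=2\Re\bigl(\mathcal A e^{t\mathcal A}u,\,e^{t\mathcal A}u\bigr)$; evaluating this at every $t$ (and using density) on the one hand, and at $t=0$ on the other, shows that the group is unitary if and only if $\Re(\mathcal A u,u)=0$ for all $u\in D(\mathcal A)$, which is precisely the isometry condition \eqref{eq:isometry}.

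The next step is to rewrite \eqref{eq:isometry} in a form amenable to Lemma~\ref{l:appendixB}. Since $\gamma_\mv(u)\in Y_\mv$ and $Y_\mv$ is totally isotropic for $q_\mv$, the boundary sum $\sum_\mv T_\mv\gamma_\mv(u)\cdot\gamma_\mv(\bar u)$ in \eqref{eq:Adiss} vanishes on $D(\mathcal A)$, leaving only the bulk term. Using the elementary identity $2\Re\bigl(B\xi\cdot\bar\xi\bigr)=(B+B^\ast)\xi\cdot\bar\xi$ together with the fact that $(Q_\me M_\me)'$ is Hermitian (because $Q_\me M_\me$ is), condition \eqref{eq:isometry} becomes
\[
\sum_{\me\in\mE}\int_0^{\ell_\me} S_\me(x)\,u_\me(x)\cdot\bar u_\me(x)\,dx=0\qquad\text{for all }u\in D(\mathcal A),
\]
where $S_\me:=Q_\me N_\me+(Q_\me N_\me)^\ast-(Q_\me M_\me)'$ is an $L^\infty$, Hermitian-matrix-valued function. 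The algebraic condition \eqref{cond:iso} is exactly $S_\me\equiv 0$ for every $\me\in\mE$, so the corollary reduces to equivalence of these two statements.

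The implication \eqref{cond:iso} $\Rightarrow$ unitarity is immediate, since $S_\me\equiv 0$ makes the displayed integral vanish for every $u$. For the converse the key observation is that $D_{\min}=\bigoplus_{\me\in\mE}H^1_0(0,\ell_\me)^{k_\me}\subset D(\mathcal A)$, because any $u\in D_{\min}$ satisfies $\gamma_\mv(u)=0\in Y_\mv$; testing with functions supported on a single edge decouples the sum, so that Lemma~\ref{l:appendixB}, applied on each interval $(0,\ell_\me)$ to the Hermitian symbol $S_\me$, forces $S_\me=0$ a.e., i.e.\ \eqref{cond:iso}. I expect this passage from the integrated quadratic form to the pointwise identity to be the only real obstacle; it is dispatched by Lemma~\ref{l:appendixB} once one checks that $D(\mathcal A)$ contains the edgewise-localised test functions from $D_{\min}$.

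Finally, for energy conservation I would simply observe that, by the definition \eqref{eq:prod?} of the inner product, the quantity in \eqref{eq:energydef} is $\mathcal E(t)=\tfrac12\|u(t)\|^2$; unitarity gives $\|u(t)\|=\|u(0)\|$ for all $t\in\R$, whence $\mathcal E$ is constant along the flow.
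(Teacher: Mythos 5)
Your proposal is correct and follows essentially the same route as the paper's proof: reduce unitarity to the vanishing of $\Re(\mathcal A u,u)$ on $D(\mathcal A)$, rewrite this as the vanishing of $\int S_\me u_\me\cdot\bar u_\me\,dx$ with the Hermitian symbol $S_\me=Q_\me N_\me+(Q_\me N_\me)^\ast-(Q_\me M_\me)'$, obtain sufficiency trivially and necessity by testing with edgewise compactly supported functions (which lie in $D(\mathcal A)$) and invoking \autoref{l:appendixB}, and deduce energy conservation from isometry. The only cosmetic difference is that you justify the equivalence ``unitary $\Leftrightarrow$ $\Re(\mathcal A u,u)=0$'' by differentiating $\|e^{t\mathcal A}u\|^2$, where the paper cites the Lumer--Phillips theorem; both are sound.
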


\begin{proof}
Under the assumptions of \autoref{prop:main1}, the identity \eqref{eq:Adiss}
guarantees that
\[
\Re\left(\mathcal A u, u\right)=0  \qquad\hbox{for all }u\in D(\mathcal A)
\]
if and only if \eqref{eq:isometry} holds.
But simple calculations show that  \eqref{eq:isometry} is equivalent to
\begin{equation}\label{eq:isometryequiv}
\sum_{\me\in\mE}\int_0^{\ell_\me}
\left (Q_\me  N_\me+(Q_\me  N_\me)^\ast -    ( Q_\me M_\me)'  \right) u_\me \cdot \bar u_\me  \,dx=0\qquad\hbox{for all }u\in D(\mathcal A).
\end{equation}
This obviously shows that \eqref{cond:iso} is a sufficient condition for the unitarity property of the   semigroup generated by $\mathcal A$.
For the necessity,  let us observe that $Q_\me  N_\me+(Q_\me  N_\me)^\ast -    ( Q_\me M_\me)'$
is hermitian. Since the test functions vanishing at each endpoint satisfy all boundary conditions, $\bigoplus_{\me\in \mE}\mathcal{D}(0,\ell_\me)^{k_\me}$
is included in $D(\mathcal A)$ and we deduce that \eqref{eq:isometryequiv} implies that
\[
 \int_0^{\ell_\me}
\left (Q_\me  N_\me+(Q_\me  N_\me)^\ast -    ( Q_\me M_\me)'  \right) u_\me \cdot \bar u_\me  \,dx=0\qquad\hbox{for all }u_\me \in \mathcal{D}(0,\ell_\me)^{k_\me} \hbox{ and all }\me\in\mE.
\]
By \autoref{l:appendixB} we conclude that \eqref{cond:iso} holds.
Finally, because
\[
\frac{d}{dt}\mathcal E(t)=\Re \left(\mathcal A u(t), u(t)\right)
\]
holds along classical solutions $u$ of~\eqref{eq:max1}, the second assertion is valid as well.
\end{proof}

Note that condition~\eqref{cond:iso} is  satisfied in the special case when $Q_\me M_\me$ is spatially constant and $Q_\me N_\me$ has zero or purely imaginary entries.

We will see,  however, that the condition~\eqref{eq:basis} is not  satisfied in   some relevant applications (in \autoref{sec:dirac}, for example). Therefore, let us present 
an alternative  approach to prove well-posedness   based on the dissipativity of $\mathcal A$ and its adjoint $\mathcal A^\ast$  that is used in \cite[Appendix A]{BasCor16} for diagonal systems.   The first step is to characterize  the adjoint operator.

{
\begin{lemma}\label{lem:adjoint}
The adjoint of the operator $\mathcal A$ defined in  \eqref{eq:maxop}-\eqref{eq:maxop-dom} is given by
 \begin{equation}\label{eq:defAast}
\begin{split}
D({\mathcal A}^\ast)&=\{v\in D_{\max}: \gamma_\mv (v)\in T_\mv^{-1} Y_\mv^\perp\hbox{ for all }\mv\in\mV\},\\
({\mathcal A}^\ast v)_\me&=- M_\me  v'_\me  -Q_\me^{-1}\left( Q_\me M_\me  \right)'   v_\me
+Q_\me^{-1}N_\me^\ast Q_\me v_\me,\qquad \me\in\mE.
\end{split}
\end{equation}
\end{lemma}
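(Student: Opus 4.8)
The plan is to verify directly that the operator $\mathcal B$ defined by the right-hand sides of \eqref{eq:defAast} — namely $(\mathcal B v)_\me := -M_\me v'_\me - Q_\me^{-1}(Q_\me M_\me)' v_\me + Q_\me^{-1} N_\me^\ast Q_\me v_\me$ on the domain $D(\mathcal B) := \{v \in D_{\max} : \gamma_\mv(v) \in T_\mv^{-1} Y_\mv^\perp \text{ for all } \mv \in \mV\}$ — coincides with $\mathcal A^\ast$, by establishing the two inclusions $\mathcal B \subseteq \mathcal A^\ast$ and $\mathcal A^\ast \subseteq \mathcal B$ separately. The whole computation rests on the integration-by-parts identity \eqref{eq:big-int-by-p} together with the fact that, since $Q_\me$ is Hermitian and invertible, the weighted inner product \eqref{eq:prod?} and the Euclidean pairing are interchanged via $Q_\me u_\me \cdot \bar w = u_\me \cdot \overline{Q_\me w}$.

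First I would rewrite $(\mathcal A u, v)$ for $u, v \in D_{\max}$. Splitting off the zeroth-order term and applying \eqref{eq:big-int-by-p} to $\sum_\me \int_0^{\ell_\me} Q_\me M_\me u'_\me \cdot \bar v_\me\,dx$ moves the derivative onto $v$ and produces, besides a boundary contribution, the two volume terms $-\sum_\me \int u_\me \cdot \overline{Q_\me M_\me v'_\me}\,dx$ and $-\sum_\me \int u_\me \cdot \overline{(Q_\me M_\me)' v_\me}\,dx$. Using $Q_\me^{-1} Q_\me M_\me = M_\me$ and the Hermitian symmetry of $Q_\me$, each Euclidean pairing $u_\me \cdot \overline{(\cdot)}$ converts into a $Q_\me$-weighted pairing $Q_\me u_\me \cdot \overline{(\cdot)}$; doing the same with $Q_\me N_\me u_\me \cdot \bar v_\me = Q_\me u_\me \cdot \overline{Q_\me^{-1} N_\me^\ast Q_\me v_\me}$, I obtain precisely $(\mathcal A u, v) = (u, \mathcal B v) + \sum_\me [Q_\me M_\me u_\me \cdot \bar v_\me]\big|_0^{\ell_\me}$. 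Exactly as in the passage from \eqref{ibp} to \eqref{eq:ibpequiv}, the boundary sum reorganizes per vertex as $\sum_\mv T_\mv \gamma_\mv(u) \cdot \overline{\gamma_\mv(v)}$ with $T_\mv$ as in \eqref{eq:defTv}; here I would record that $T_\mv$ is Hermitian (each block $Q_\me M_\me \iota_{\mv\me}$ is Hermitian, $\iota_{\mv\me} = \pm 1$ being real) and invertible (as $Q_\me, M_\me$ are), so that $T_\mv^{-1} Y_\mv^\perp$ is well defined.

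The inclusion $\mathcal B \subseteq \mathcal A^\ast$ is then immediate: if $v \in D(\mathcal B)$ then for every $u \in D(\mathcal A)$ one has $\gamma_\mv(u) \in Y_\mv$ and $T_\mv \gamma_\mv(v) \in Y_\mv^\perp$, so $T_\mv \gamma_\mv(u) \cdot \overline{\gamma_\mv(v)} = \gamma_\mv(u) \cdot \overline{T_\mv \gamma_\mv(v)} = 0$ at each vertex; hence the boundary term vanishes and $(\mathcal A u, v) = (u, \mathcal B v)$ for all $u \in D(\mathcal A)$, i.e. $v \in D(\mathcal A^\ast)$ with $\mathcal A^\ast v = \mathcal B v$. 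For the reverse inclusion I would take $v \in D(\mathcal A^\ast)$ with $\mathcal A^\ast v = w$, so $(\mathcal A u, v) = (u, w)$ for all $u \in D(\mathcal A)$. Testing first against $u \in \bigoplus_{\me\in\mE} \mathcal D(0,\ell_\me)^{k_\me} \subseteq D(\mathcal A)$ kills the boundary term and identifies $w$ distributionally with the formal expression $\mathcal B v$; since $w \in \bL^2(\mathcal G)$ and $Q_\me M_\me$ is invertible and Lipschitz, this forces $M_\me v'_\me \in L^2$, i.e. $v \in D_{\max} = \bigoplus_{\me\in\mE} H^1(0,\ell_\me)^{k_\me}$ by \autoref{lem:cr}, and $w = \mathcal B v$. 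Feeding this back, the identity $(\mathcal A u, v) = (u, \mathcal B v)$ holds for all $u \in D(\mathcal A)$ with the boundary term present, whence $\sum_\mv T_\mv \gamma_\mv(u) \cdot \overline{\gamma_\mv(v)} = 0$ for all such $u$. Because every family $(\xi_\mv)_{\mv\in\mV} \in \prod_{\mv\in\mV} Y_\mv$ is realized as $(\gamma_\mv(u))_{\mv\in\mV}$ by some $u \in D(\mathcal A)$ — the endpoint values of the $H^1$ edges being freely and independently prescribable — the vanishing decouples vertexwise into $T_\mv \gamma_\mv(v) \perp Y_\mv$, i.e. $\gamma_\mv(v) \in T_\mv^{-1} Y_\mv^\perp$, so $v \in D(\mathcal B)$.

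The main obstacle is the regularity bootstrap in this last step: one must argue carefully that an a priori merely-$L^2$ element $v$ of the abstract domain $D(\mathcal A^\ast)$ actually lies in $D_{\max}$, so that the integration by parts \eqref{eq:big-int-by-p} and the traces $\gamma_\mv(v)$ are legitimate. This is the standard point that the adjoint of a first-order differential operator is again a differential operator; it is handled by extracting the weak derivative from the interior (compactly supported) test functions \emph{before} invoking the boundary pairing. The surjectivity of the trace onto $\prod_{\mv\in\mV} Y_\mv$ is the complementary ingredient that makes the resulting boundary identity sharp, turning the necessary condition $T_\mv\gamma_\mv(v)\perp Y_\mv$ into the exact description $\gamma_\mv(v)\in T_\mv^{-1}Y_\mv^\perp$.
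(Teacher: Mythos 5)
Your proof is correct and follows essentially the same route as the paper's: the Green-type identity $(\mathcal A u,v)=(u,\mathcal B v)+\sum_{\mv\in\mV}T_\mv\gamma_\mv(u)\cdot\gamma_\mv(\bar v)$, the distributional bootstrap showing $v\in D(\mathcal A^\ast)$ lies in $D_{\max}$, and the surjectivity of the traces onto $Y_\mv$ to extract $\gamma_\mv(v)\in T_\mv^{-1}Y_\mv^\perp$. The only (harmless) difference is organizational: you spell out both inclusions $\mathcal B\subseteq\mathcal A^\ast$ and $\mathcal A^\ast\subseteq\mathcal B$ explicitly, whereas the paper proves the second and leaves the first implicit in identity \eqref{eq:Auv}.
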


\begin{proof}
The identity~\eqref{eq:big-int-by-p} is equivalent to
\begin{equation}\label{eq:big-int-by-p-equiv}
\begin{split}
\sum_{\me\in\mE}\int_0^{\ell_\me}
Q_\me   M_\me u'_\me\cdot \bar v_\me \,dx
&=
-\sum_{\me\in\mE}\int_0^{\ell_\me}
  u_\me \cdot
\overline{Q_\me M_\me  v'_\me} \,dx -\sum_{\me\in\mE}\int_0^{\ell_\me}
  u_\me \cdot   
  \overline{\left( Q_\me M_\me  \right)'   v_\me}  \,dx
  \\
&\qquad +
\sum_{\mv\in\mV}  T_\mv \gamma_\mv(u)\cdot\gamma_\mv (\bar v).
\end{split}
\end{equation}

Since 
\[
\left(\mathcal A u, v\right)=
\sum_{\me\in\mE}\int_0^{\ell_\me}
Q_\me  \left( M_\me u'_\me+ N_\me  u_\me \right)\cdot \bar v_\me \,dx\qquad\hbox{for all }u,v\in D_{\max},
\]
we get

\begin{equation}\label{eq:Auv}
\begin{split}
\left(\mathcal A u, v\right)
&=
\sum_{\me\in\mE}\int_0^{\ell_\me}
  u_\me \cdot 
\overline{\left(-Q_\me M_\me  v'_\me  -\left( Q_\me M_\me  \right)'   v_\me
+N_\me^\ast Q_\me v_\me\right)}  \,dx
  \\
&\qquad +
\sum_{\mv\in\mV}  T_\mv \gamma_\mv(u)\cdot\gamma_\mv (\bar v)
 \qquad \hbox{for all } u, v \in D_{\max}.\end{split}
\end{equation}
Now, $v\in \bL^2(\mathcal G)$ belongs to $D({\mathcal A}^\ast)$ if and only if there exists 
$g\in \bL^2(\mathcal G)$ such that
\[
\left(\mathcal A u, v\right)=\left(u, g\right)\qquad  \hbox{for all } u\in D(\mathcal A)
\]
and in this case ${\mathcal A}^\ast v=g$.

First, by taking $u$ such that $u_\me\in \mathcal{D}(0,\ell_\me)$ in this identity 
we find
\begin{equation}\label{DEadjoint}
-Q_\me M_\me  v'_\me  -\left( Q_\me M_\me  \right)'   v_\me
+N_\me^\ast Q_\me v_\me=Q_\me g_\me 
\end{equation}
in the distributional sense. Since, under our \autoref{assum}.(1), $Q_\me M_\me$ is invertible (with a bounded inverse), we find
that $v_\me \in H^1(0,\ell_\me)$, hence $v$ belongs to $D_{\max}.$

Now we can apply  the  identity \eqref{eq:Auv} and get
\begin{equation}\label{eq:A*uv}
\begin{split}
\left(u, g\right)
&=
\sum_{\me\in\mE}\int_0^{\ell_\me}
  u_\me \cdot 
\overline{\left(-Q_\me M_\me  v'_\me  -\left( Q_\me M_\me  \right)'   v_\me
+N_\me^\ast Q_\me v_\me\right)}  \,dx
  \\
&\qquad +
\sum_{\mv\in\mV}  T_\mv \gamma_\mv(u)\cdot\gamma_\mv (\bar v)
 \qquad \hbox{for all }u\in D(\mathcal A)\end{split}
\end{equation}
and, by \eqref{DEadjoint}, we find
\[
\sum_{\mv\in\mV}  T_\mv \gamma_\mv(u)\cdot\gamma_\mv (\bar v)=0 \qquad \hbox{for all }u\in D(\mathcal A).
\]
Since the trace operator $\gamma_\mv: D_{\max}\to \C^{k_\mv}$ is surjective, so is its restriction from $D(\mathcal A)$ to $Y_\mv$. Hence we get
\[
y\cdot T_\mv \gamma_\mv (\bar v)=0\qquad \hbox{for all }y\in Y_\mv,
\]
and therefore 
\begin{equation}\label{eq:tvy}
T_\mv \gamma_\mv (v)\in Y_\mv^\perp \ .
\end{equation}
(Here, $Y_\mv^\perp$ denotes the orthogonal  complement of $Y_\mv$ in $\C^{k_\mv}$ for the Euclidean inner product.) Since $T_\mv$ is invertible,~\eqref{eq:tvy} is equivalent to
\[
  \gamma_\mv (v)\in T_\mv^{-1} Y_\mv^\perp.
\]
This concludes the proof.
 \end{proof}

\begin{theo}\label{thm:new-for-dirac}
For all $\mv\in \mV$, let both $Y_\mv$ and $T^{-1}_\mv Y_\mv$ be 
totally isotropic subspaces associated  with the quadratic form $q_\mv$ defined by~\eqref{eq:qv-def}.
Then both $\pm\mathcal A$,  defined on the domain
\[
\begin{split}
D(\mathcal A):=\left\{
u\in D_{\max}:  \gamma_\mv(u)\in Y_\mv\hbox{ for all }\mv\in\mV
\right\},
\end{split}
\]
are quasi-$m$-dissipative operators.
In particular, both $\pm \mathcal A$ generate a strongly continuous semigroup and hence a strongly continuous group in $\bL^2(\mathcal G)$. If, additionally, \eqref{cond:iso} holds, then the group is unitary.
\end{theo}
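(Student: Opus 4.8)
The plan is to re-use the Lumer--Phillips machinery of \autoref{prop:main1}, but to replace its maximality argument — which rested on the dimension count \eqref{eq:basis} — by a duality argument based on the dissipativity of the adjoint. Concretely, I would invoke the folklore criterion that a densely defined \emph{closed} operator $\mathcal B$ on a Hilbert space is quasi-$m$-dissipative as soon as both $\mathcal B$ and $\mathcal B^\ast$ are quasi-dissipative, and apply it to $\mathcal B=\pm\mathcal A$; since $(\pm\mathcal A)^\ast=\pm\mathcal A^\ast$, it then suffices to show that all four operators $\pm\mathcal A,\pm\mathcal A^\ast$ are quasi-dissipative. As in \autoref{prop:main1}, by \autoref{assum}.(2) and the Bounded Perturbation Theorem I may assume $N=0$ throughout. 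Density of $D(\mathcal A)$ is clear, as it contains $\bigoplus_{\me\in\mE}\mathcal D(0,\ell_\me)^{k_\me}$, and $\mathcal A$ is closed because each $\gamma_\mv$ is continuous on $D_{\max}=\bigoplus_{\me\in\mE}H^1(0,\ell_\me)^{k_\me}$ and each $Y_\mv$ is closed.

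The quasi-dissipativity of $\pm\mathcal A$ is obtained exactly as in \autoref{prop:main1}: for $u\in D(\mathcal A)$ one has $\gamma_\mv(u)\in Y_\mv$, so the boundary contribution $\tfrac12\sum_{\mv\in\mV}q_\mv(\gamma_\mv(u))$ in \eqref{eq:Adiss} vanishes because $Y_\mv$ is totally isotropic, and only the bulk term survives, which is bounded by $C\|u\|^2$ as in \eqref{eq:Adiss2}. The genuinely new ingredient is the quasi-dissipativity of $\pm\mathcal A^\ast$. Starting from the description of $\mathcal A^\ast$ in \autoref{lem:adjoint} — with $N=0$ this reads $(\mathcal A^\ast v)_\me=-M_\me v'_\me-Q_\me^{-1}(Q_\me M_\me)'v_\me$ on the domain $\{v\in D_{\max}:\gamma_\mv(v)\in T_\mv^{-1}Y_\mv^\perp\}$ — and integrating by parts by means of \eqref{eq:ibpequiv}, I would arrive at
\[
\Re(\mathcal A^\ast v,v)=-\tfrac12\sum_{\me\in\mE}\int_0^{\ell_\me}(Q_\me M_\me)'v_\me\cdot\bar v_\me\,dx-\tfrac12\sum_{\mv\in\mV}q_\mv(\gamma_\mv(v)),\qquad v\in D(\mathcal A^\ast).
\]
Again the bulk term is controlled by $C\|v\|^2$, so everything reduces to showing that the boundary form $\sum_{\mv\in\mV}q_\mv(\gamma_\mv(v))$ vanishes on $D(\mathcal A^\ast)$, i.e.\ that $q_\mv$ restricts to $0$ on the adjoint boundary space $T_\mv^{-1}Y_\mv^\perp$ (only then is the indefinite boundary form harmless for \emph{both} signs, since the boundary values are not controlled by $\|v\|$).

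This is exactly where the second hypothesis must be used, and it is the step I expect to be the main obstacle. Since $T_\mv$ is Hermitian and invertible, writing $\zeta=T_\mv^{-1}w$ with $w\in Y_\mv^\perp$ gives $q_\mv(\zeta)=T_\mv^{-1}w\cdot\bar w$, so that $q_\mv$ vanishes on $T_\mv^{-1}Y_\mv^\perp$ precisely when the $T_\mv^{-1}$-form $w\mapsto T_\mv^{-1}w\cdot\bar w$ vanishes on $Y_\mv^\perp$; the hypothesis ``$T_\mv^{-1}Y_\mv$ totally isotropic'' says the same $T_\mv^{-1}$-form vanishes on $Y_\mv$. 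The linear-algebra lemma I would isolate is that the needed vanishing on $T_\mv^{-1}Y_\mv^\perp$ is equivalent to the single inclusion $T_\mv Y_\mv\supseteq Y_\mv^\perp$, which, combined with the isotropy of $Y_\mv$ (i.e.\ $T_\mv Y_\mv\subseteq Y_\mv^\perp$), amounts to the equality $T_\mv Y_\mv=Y_\mv^\perp$, equivalently $\dim Y_\mv=k_\mv/2$. The natural mechanism making this work is that the sesquilinear form of $q_\mv$ pairs $Y_\mv$ with $T_\mv^{-1}Y_\mv$ through the Euclidean product — since $T_\mv y\cdot\overline{T_\mv^{-1}y'}=y\cdot\bar y'$ — so that the two hypotheses force $Y_\mv$ and $T_\mv^{-1}Y_\mv$ to be \emph{complementary} maximal isotropic subspaces, $\C^{k_\mv}=Y_\mv\oplus T_\mv^{-1}Y_\mv$; granting complementarity, the inclusions $T_\mv Y_\mv\subseteq Y_\mv^\perp$ and $T_\mv^{-1}Y_\mv\subseteq Y_\mv^\perp$ become equalities $T_\mv Y_\mv=T_\mv^{-1}Y_\mv=Y_\mv^\perp$, whence $T_\mv^2Y_\mv=Y_\mv$ and $T_\mv^{-1}Y_\mv^\perp=T_\mv^{-2}Y_\mv=Y_\mv$, so that $q_\mv\equiv0$ on $T_\mv^{-1}Y_\mv^\perp$. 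The delicate point I would scrutinize most carefully is precisely that the stated isotropy assumptions really deliver this dimensional balance (rather than merely the two isotropies), and I would expect to need to make the complementarity explicit.

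With all four operators quasi-dissipative, the criterion gives that $\pm\mathcal A$ are quasi-$m$-dissipative, hence by the Lumer--Phillips Theorem generate quasi-contraction $C_0$-semigroups; as both signs are covered, $\mathcal A$ generates a $C_0$-group on $\bL^2(\mathcal G)$. For the final unitarity statement I would argue exactly as in \autoref{cor:main1}: reinstating $N$ and imposing \eqref{cond:iso} makes the bulk term vanish as well, so that $\Re(\mathcal A u,u)=0=\Re(-\mathcal A u,u)$ for all $u\in D(\mathcal A)$; both $\pm\mathcal A$ are then $m$-dissipative with purely imaginary numerical range, and the resulting group therefore consists of isometries, i.e.\ is unitary.
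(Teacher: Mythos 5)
Your architecture is exactly the paper's: density, quasi-dissipativity of $\pm\mathcal A$ as in the first part of the proof of \autoref{prop:main1}, closedness of $\mathcal A$, the description of $\mathcal A^\ast$ from \autoref{lem:adjoint}, the identity for $\Re(\mathcal A^\ast v,v)$ (which is \eqref{eq:A*diss} with $N=0$), and the criterion of \cite[Cor.~II.3.17]{EngNag00}; the unitarity conclusion via the argument of \autoref{cor:main1} also matches. The genuine gap is precisely the linear-algebra bridge you flagged yourself: the two stated isotropy hypotheses do \emph{not} imply the complementarity $\C^{k_\mv}=Y_\mv\oplus T_\mv^{-1}Y_\mv$, nor the dimensional balance $\dim Y_\mv=k_\mv/2$. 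Take $Y_\mv=\{0\}$: then both $Y_\mv$ and $T_\mv^{-1}Y_\mv=\{0\}$ are trivially totally isotropic, yet $T_\mv^{-1}Y_\mv^\perp=\C^{k_\mv}$, on which $q_\mv$ cannot vanish (polarization would force $T_\mv=0$, contradicting invertibility). Moreover the conclusion of the theorem genuinely fails for this choice: it imposes Dirichlet conditions at every endpoint, so $D(\mathcal A^\ast)=D_{\max}$ carries no boundary conditions and $\pm\mathcal A^\ast$ is not quasi-dissipative, and already on a single edge with $M_\me=Q_\me=1$, $N_\me=0$, the problem $\lambda u-u'=f$, $u(0)=0$ determines $u$ uniquely, so the extra condition $u(\ell_\me)=0$ fails for generic $f$ and $\lambda-\mathcal A$ is not surjective. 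So no argument can close your step. What does survive is the direct-sum half of your claim: $Y_\mv\cap T_\mv^{-1}Y_\mv=\{0\}$ follows from isotropy of $Y_\mv$ alone (if $z$ and $T_\mv z$ both lie in $Y_\mv$, the polarized form gives $\|T_\mv z\|^2=0$, hence $z=0$); but that sum need not be all of $\C^{k_\mv}$.

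The resolution is that the hypothesis as printed contains a typo: the intended assumption is that $Y_\mv$ and $T_\mv^{-1}Y_\mv^{\perp}$ (not $T_\mv^{-1}Y_\mv$) be totally isotropic with respect to $q_\mv$. This is what the paper's own proof uses verbatim — it ends by observing that quasi-dissipativity of $\pm\mathcal A^\ast$ holds if $q_\mv$ vanishes on the adjoint boundary space $T_\mv^{-1}Y_\mv^\perp$, and treats this as the standing assumption — it is how the parallel second bullet of \autoref{prop:main2} is phrased (there with the nonnegative isotropic cone), and it is what is verified in the application in \autoref{sec:dirac}, where the chosen $Y_\mv$ satisfies $T_\mv^{-1}Y_\mv^\perp=Y_\mv$. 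Under the corrected hypothesis, the step you were laboring over \emph{is} the hypothesis, no further linear algebra is needed, and the remainder of your argument is correct and coincides with the paper's proof. Your own reduction in fact shows that the corrected hypothesis, combined with isotropy of $Y_\mv$, is equivalent to $T_\mv Y_\mv=Y_\mv^\perp$ and hence forces $\dim Y_\mv=k_\mv/2$: the picture of complementary maximal isotropic subspaces you were aiming for is a consequence of the right hypothesis, not a substitute for it.
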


\begin{proof}
We already know that $\mathcal A$ is densely defined.
By the first part of the proof of  \autoref{prop:main1} 
(that does not depend on condition~\eqref{eq:basis}), we see that both $\pm\mathcal A$ are quasi-dissipative.
{It is easy to verify that $\mathcal A$ is closed, see, e.g., the proof of \cite[(A.24)]{BasCor16}.}
In view of~\cite[Cor.~II.3.17]{EngNag00}, it  thus suffices to show
that $\pm\mathcal A^\ast$ {is quasi-}dissipative, too. 
Observe that~\eqref{eq:ibpequiv}, which is known to hold for all $u\in D_{\max}$, implies that 
\begin{equation}\label{eq:A*diss}
\begin{split}
\Re\left({\mathcal A}^\ast u, u\right)
&=
\Re\sum_{\me\in\mE}\int_0^{\ell_\me}
\left(-\left( Q_\me M_\me  \right)'   u_\me
+N_\me^\ast Q_\me u_\me\right)\cdot  \bar u_\me \,dx\\
&\quad +\frac{1}{2}  \sum_{\me\in\mE}\int_0^{\ell_\me}
  \left( Q_\me M_\me\right)' u_\me \cdot      \bar u_\me  \,dx
-\frac{1}{2}\sum_{\mv\in\mV}  T_\mv \gamma_\mv(u)\cdot\gamma_\mv(\bar u)
\end{split}
\end{equation}
for all $u\in D({\mathcal A}^\ast)$.
Hence quasi-dissipativity of both $\pm\mathcal A^\ast$ holds if
\begin{equation}\label{eq:A*dissalgebraiccond}
 q_\mv( \xi)= 0,\qquad \hbox{for all } \xi\in T_\mv^{-1} Y_\mv^\perp,
 \end{equation}
i.e., if $T_\mv^{-1} Y_\mv^\perp$ is a totally isotropic subspace associated with the quadratic form $q_\mv$.
\end{proof} 
}

In the remainder of this section, we are going to comment on the possibility of an alternative way of formulating transmission conditions. This is especially relevant in applications to models of theoretical physics when the stress is on unitary well-posedness, see e.g.\ Section~\ref{sec:dirac}, rather than accurate description of the network structure.

\subsection{Global boundary conditions}\label{rk:globalbc}

It is natural to choose transmission conditions that reflect the connectivity of the network, that is the reason of the local boundary condition ~\eqref{eq:mainbc}. However, nonlocal boundary conditions can be imposed as well by re-writing the term
$\sum_{\mv\in\mV}  T_\mv \gamma_\mv(u)\cdot\gamma_\mv(\bar u)$ in a global way as 
\[
T \gamma(u)\cdot\gamma(\bar u),
\]
where 
\[
\gamma (u):= \left(\underline{u(0)}, \underline{u (\ell)}\right)^\top:=
\left(\left(u_\me(0)\right)_{\me\in\mE}, \left(u_\me (\ell_\me)\right)_{\me\in\mE}\right)^\top
\]
and the $2k\times 2k$ matrix $T$ is given by
\begin{equation}\label{eq:Tdef}
T:= \begin{pmatrix}
-\diag \left(
Q_\me(0) M_\me(0)\right)_{\me\in\mE}& 0
\\
0& \diag \left(Q_\me(\ell_\me) M_\me(\ell_\me)\right)_{\me\in\mE}
\end{pmatrix},
\end{equation}
without any reference to the structure of the network.
With this notation,
\[
\begin{split}
\Re\left(\mathcal A u, u\right)&=
\Re\sum_{\me\in\mE}\int_0^{\ell_\me}
\left (\left( Q_\me  N_\me -\frac12 (Q_\me M_\me)'\right) u_\me \cdot \bar u_\me 
\right)\,dx +\frac{1}{2}T \gamma(u)\cdot\gamma(\bar u),
  \end{split}
  \]
and this suggests to replace \eqref{eq:mainbc} by
\begin{equation}\label{eq:mainbc_global}
\gamma(u) \in Y,
\end{equation}
i.e., to consider $\mathcal A$ with domain
\[
\begin{split}
D(\mathcal A):=\left\{
u\in D_{\max}:  \gamma(u)\in Y
\right\},
\end{split}
\]
where $Y\subset  \C^{2k}$   is a subspace of the null isotropic cone associated with the quadratic form 
\be\label{eq:quadglobal}
T \xi\cdot \bar \xi , \quad \hbox{for all } \xi \in \C^{2k}.
\ee
This corresponds to glue all vertices together, thus forming a so-called flower graph, and to impose general transmission conditions in the only vertex of such a flower.

\begin{figure}[H]
\begin{minipage}{5cm}
\begin{tikzpicture}[scale=0.8]
\foreach \x in {45,135,225,315}{
\foreach \y in {45,135,225,315}{
\draw (\x:2cm) -- (\y:2cm);
\draw[fill] (\x:2cm) circle (2pt);
}}
\end{tikzpicture}
\end{minipage}
\begin{minipage}{3cm}
\begin{tikzpicture}[scale=0.5]
  \begin{polaraxis}[grid=none, axis lines=none]
     \addplot[mark=none,domain=0:360,samples=300] { abs(cos(6*x/2))};
   \end{polaraxis}
\draw[fill] (3.43,3.43) circle (4pt);
 \end{tikzpicture}
\end{minipage} 
     \caption{Gluing all the vertices of a graph: from a complete graph on four vertices (left) to a flower graph on six edges (right).}
\end{figure}
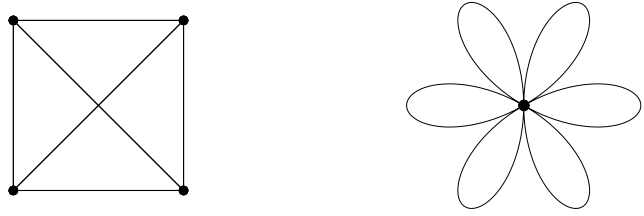

In this setting the well-posedness conditions  is different from \eqref{eq:basis}  and  sums up to
\begin{equation}\label{eq:basis_global}
\dim Y^\perp  = k=  \dim P_K Y^\perp,
\end{equation}
where $K$ is the $k$-dimensional subspace of  $\C^{2k}$
defined by
\[
K=\left\{\left(\left(K_\me\right)_{\me\in\mE}, \left(K_\me\right)_{\me\in\mE}\right)^\top:
K_\me\in \mathbb{C} \text{ for all }\me\in\mE\right\}
\]
and $P_K$ is the orthogonal projection on $K$ with respect to the Euclidean inner product of $\C^{2k}$. {To see this, we elaborate on the proof of \autoref{prop:main1} and
write   $I:=\{1,2,\ldots, \dim Y^\perp\}$. Once fixed a basis $\{\mW^{(i)}\}_{i\in I}$ of $Y^\perp\subset \C^{2k}$, using \eqref{eq:varpar} the condition \eqref{eq:mainbc_global} is equivalent to
\[
\left(\left(K_\me\right)_{\me\in\mE}, \left(K_\me\right)_{\me\in\mE}\right)^\top\cdot 
\overline{\mW^{(i)}}=\left(\left(0 \right)_{\me\in\mE}, 
\left(-\int_0^{\ell_\me}M^{-1}_\me(y)\mf_\me(y)\,dy\right)_{\me\in\mE}\right)^\top
\cdot 
\overline{\mW^{(i)}}\quad  \hbox{ for all } i\in I.
\]
Since the vector $\left(\left(K_\me\right)_{\me\in\mE}, \left(K_\me\right)_{\me\in\mE}\right)^\top$ belongs to $K$, we have
\[
\left(\left(K_\me\right)_{\me\in\mE}, \left(K_\me\right)_{\me\in\mE}\right)^\top\cdot 
\overline{\mW^{(i)}}=
\left(\left(K_\me\right)_{\me\in\mE}, \left(K_\me\right)_{\me\in\mE}\right)^\top\cdot 
\overline{P_K\mW^{(i)}},
\]
and therefore this last condition is equivalent to
\[
\left(\left(K_\me\right)_{\me\in\mE}, \left(K_\me\right)_{\me\in\mE}\right)^\top\cdot 
\overline{P_K\mW^{(i)}}=\left(\left(0 \right)_{\me\in\mE}, 
\left(-\int_0^{\ell_\me}M^{-1}_\me(y)\mf_\me(y)\,dy\right)_{\me\in\mE}\right)^\top
\cdot 
\overline{\mW^{(i)}} \quad \hbox{ for all } i\in I.
\]
Our assumptions \eqref{eq:basis_global} guarantee that we are dealing with a $k\times k$ linear system for which its associated matrix is invertible. This then proves the maximality condition.}

{
Note that the second condition in \eqref{eq:basis_global} admits different formulations stated in the next Lemma.
\begin{lemma}\label{lem:equivSerge} We have
\begin{equation}
\label{eq:equivSerge}
 \dim P_K Y^\perp = k \Leftrightarrow P_K Y^\perp=K \Leftrightarrow  Y\cap K=\{0\}.
 \end{equation}
\end{lemma}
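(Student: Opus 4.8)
The plan is to reduce both equivalences in~\eqref{eq:equivSerge} to a single dimension formula, namely
\[
\dim P_K Y^\perp = k - \dim(Y\cap K),
\]
from which everything falls out at once. The first equivalence is essentially free of content: since $P_K$ is the orthogonal projection onto $K$, its range is contained in $K$, so $P_K Y^\perp\subseteq K$; as $\dim K=k$, a subspace of $K$ has dimension $k$ exactly when it coincides with $K$. Thus $\dim P_K Y^\perp = k \Leftrightarrow P_K Y^\perp = K$ requires no computation.

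The core of the argument is the displayed formula. I would obtain it by applying the rank--nullity theorem to the restriction $P_K|_{Y^\perp}$, whose kernel is $Y^\perp\cap\ker P_K = Y^\perp\cap K^\perp$, and then invoking the two standard identities $Y^\perp\cap K^\perp = (Y+K)^\perp$ and Grassmann's formula $\dim(Y+K)=\dim Y+\dim K-\dim(Y\cap K)$. Concretely, starting from
\[
\dim P_K Y^\perp = \dim Y^\perp - \dim(Y^\perp\cap K^\perp),
\]
I would substitute $\dim Y^\perp = 2k-\dim Y$ and $\dim(Y^\perp\cap K^\perp)=2k-\dim(Y+K)$ to get $\dim P_K Y^\perp = \dim(Y+K)-\dim Y$. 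Applying Grassmann's formula then makes the $\dim Y$ terms cancel, leaving $\dim P_K Y^\perp = \dim K - \dim(Y\cap K) = k-\dim(Y\cap K)$, as claimed.

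With the formula in hand the remaining equivalence is immediate: $\dim P_K Y^\perp = k$ holds if and only if $\dim(Y\cap K)=0$, i.e. $Y\cap K=\{0\}$. I do not anticipate a genuine obstacle; the only point demanding a moment's care is consistently tracking the ambient dimension $2k$ (rather than $k$) throughout the orthogonal-complement bookkeeping, since $Y$ and $K$ both live in $\C^{2k}$ while the projected space $P_K Y^\perp$ sits inside the $k$-dimensional subspace $K$.
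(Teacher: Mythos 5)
Your proof is correct, but it follows a genuinely different route from the paper's. You reduce everything to the single quantitative identity $\dim P_K Y^\perp = k - \dim(Y\cap K)$, obtained by rank--nullity applied to $P_K|_{Y^\perp}$ (whose kernel is $Y^\perp\cap K^\perp=(Y+K)^\perp$) combined with Grassmann's formula; both equivalences then drop out at once. The paper instead proves the equivalence $P_K Y^\perp=K \Leftrightarrow Y\cap K=\{0\}$ by two separate implications, each a pure orthogonality argument: the key manipulation is that for $y\in K$ one has $y\cdot\bar z = y\cdot\overline{P_K z}$ (self-adjointness of $P_K$ together with $P_K y=y$), which lets one translate, for vectors of $K$, between orthogonality to $Y^\perp$ and orthogonality to $P_K Y^\perp$. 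Your approach buys a sharper statement -- the dimension formula measures exactly how far $P_K Y^\perp$ falls short of $K$, namely by $\dim(Y\cap K)$ -- and handles both directions in one computation. The paper's approach buys independence from any dimension bookkeeping in the ambient space $\C^{2k}$: it stays entirely at the level of inner-product identities, and the duality trick it exploits is the same one used in the surrounding discussion of the global boundary conditions, so the argument fits seamlessly into that context. Your only point of care, tracking that complements of $Y$ and $K$ are taken in $\C^{2k}$ while $P_K Y^\perp$ lives inside the $k$-dimensional space $K$, is handled correctly.
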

\begin{proof}
The first equivalence directly follows from the fact that $K$ is of dimension $k$.

Let us now prove the implication $P_K Y^\perp=K\Rightarrow  Y\cap K=\{0\}$. Indeed, each $y\in Y\cap K$ satisfies
\[
y\cdot \bar z=0\quad  \hbox{for all } z\in {Y^\perp}.
\]
Since $y$ is also in $K$, we have $y\cdot \bar z=y\cdot \overline{P_K z}$, thus the last property is equivalent to
\[
y\cdot \overline{P_K z}=0\quad  \hbox{for all } z\in  {Y^\perp}.
\]
By our assumption $P_K Y^\perp=K$ and we deduce that
\[
y\cdot \overline{k}=0\quad  \hbox{for all }  k\in K,
\]
which yields $y=0$ since $y$ is in $K$.

Let us show the converse implication. Take $k\in K$ orthogonal to $P_K Y^\perp$, i.e.,
\[
k\cdot \overline{P_Kz}=0\quad  \hbox{for all } z\in Y^\perp.
\]
As $k$ is in $K$, we get equivalently
\[
k\cdot \overline{z}=0\quad  \hbox{for all  } z\in Y^\perp.
\]
Therefore $k$ belongs to $Y$ and hence to $Y\cap K$. Thus $k=0$, due to the assumption $Y\cap K=\{0\}$,  which proves that $P_K Y^\perp=K$.
\end{proof}

Using $\dim Y =2k-\dim Y^\perp$   we can find different   equivalent conditions to 
\eqref{eq:basis_global}. One of them is
\begin{equation}\label{eq:basis_global_2}
\dim Y=  \dim P_K Y^\perp =k.
\end{equation}

This formalism also makes possible to compare  solutions of the same system under different transmission conditions in the vertices. We will come back to this in the next section.}

{
\begin{rem}[Local boundary conditions versus global ones]
Assume that $\mathcal{A}$ is defined by the local boundary conditions
\eqref{eq:mainbc}, then they  can be viewed as  global boundary conditions
with $Y$ defined here below. Namely for all $\mv\in \mV$, introduce the matrix operator $\mathcal{P}_\mv$
(from $\C^{2k}$ to $\C^{k_\mv}$) defined by
\[
\gamma_\mv(u)=\mathcal{P}_\mv \gamma(u).
\]
Then, given the basis   $\{\mw^{(\mv, i)}\}_{i\in I_\mv}$ of $Y_\mv^\perp\subset \C^{k_\mv}$, we define
\[
\mW^{(\mv, i)}=\mathcal{P}_\mv^\top \mw^{(\mv, i)}.
\]
We may notice that
\begin{equation}\label{eq:prop1locversusglo}
\gamma(u)\cdot \overline{\mW^{(\mv, i)}}=\gamma_\mv(u)\cdot \overline{\mw^{(\mv, i)}},
\end{equation}
hence $Y^\perp$
is simply the space spanned by $\mW^{(\mv, i)}$, for $i\in I_\mv$ and $\mv\in \mV$.
Let us also  remark  that 
\begin{equation}\label{eq:prop2locversusglo}
\mW^{(\mv, i)}\cdot \overline{\mW^{(\mv', i')}}=0, \hbox{ if } \mv\ne \mv'.
\end{equation}

These two properties imply that \eqref{eq:basis_global} is equivalent to \eqref{eq:basis}.
Indeed, by \eqref{eq:prop2locversusglo}, and as $\mathcal{P}_\mv$ being surjective,  $\mathcal{P}_\mv^\top$ is injective, 
we deduce that
\begin{equation}\label{eq:prop3locversusglo}
\dim Y^\perp=\sum_{\mv\in \mV} \dim Y_\mv^\perp,
\end{equation}
while the first one guarantees that
\begin{equation}\label{eq:prop4locversusglo}
\left(\left(K_\me\right)_{\me\in\mE}, \left(K_\me\right)_{\me\in\mE}\right)^\top\cdot 
\overline{\mW^{(\mv, i)}}=
\left(K_\me\right)_{\me\in\mE_\mv} \cdot \overline{\mw^{(\mv, i)}}
=\left(K_\me\right)_{\me\in\mE} \cdot \overline{\widetilde{\mw}^{(\mv, i)}}.
\end{equation}

Hence if \eqref{eq:basis}, then \eqref{eq:prop3locversusglo} direclty implies that
$\dim Y^\perp=k$. 
Further by \eqref{eq:prop4locversusglo},
one has
\begin{equation}\label{eq:prop5locversusglo}
\left(\left(K_\me\right)_{\me\in\mE}, \left(K_\me\right)_{\me\in\mE}\right)^\top
\cdot \overline{P_K\mW^{(\mv, i)}}=\left(K_\me\right)_{\me\in\mE} \cdot \overline{\widetilde{ \mw}^{(\mv, i)}},
\end{equation}
and since the vectors $\widetilde{ \mw}^{(\mv, i)}$ span the whole $\C^k$, if
the left-hand side is zero for all $i, \mv$, 
$\left(K_\me\right)_{\me\in\mE}$ will be zero, hence the vectors $P_K\mW^{(\mv, i)}$ span the whole $K$.

Conversely if \eqref{eq:basis_global} holds, then by \eqref{eq:prop3locversusglo}, we have 
 $\sum_{\mv\in \mV} \dim Y_\mv^\perp=k$, that is by the hand-shaking lemma equivalent to  $\sum_{\mv\in \mV} \dim Y_\mv=k$. As before, due to \eqref{eq:prop5locversusglo},
 if  the vectors $P_K\mW^{(\mv, i)}$ span the whole $K$, then the vectors $\widetilde{ \mw}^{(\mv, i)}$ span the whole $\C^k$.
\end{rem}
}

\begin{exa}\label{ex:transp-group}
The arguably easiest application of our theory is the model of flows on networks discussed e.g.\ in~\cite{KraSik05}. It consists of a system of $k=|\mE|$ scalar equations 
\[
\dot{u}_\me = c_\me u'_\me 
,\qquad \me\in\mE,
\]
where $c_\me$ are positive constants. Hence, $M_\me=c_\me, $ and we can take $Q_\me=1$, for all $\me\in \mE$.
The associated matrix $T_\mv$ 
is diagonal and takes the form 
\begin{equation*}
T_\mv= 
\diag \left(  c_\me(\mv) {\iota}_{\mv \me}
\right)_{\me\in\mE_\mv},\qquad \mv\in\mV,
\end{equation*}
where $ {\iota}_{\mv \me}$ are the entries of the incidence matrix of the graph, see \eqref{eq:defTv}.

However, in order to treat more general boundary conditions, we switch to the global setting and take
\[
T=\begin{pmatrix}
-\diag(c_\me)_{\me\in\mE}& 0
\\
0&\diag(c_\me)_{\me\in\mE}
\end{pmatrix}.
\]
A rather general way of writing the transmission conditions in the vertices is 
$V_0 \underline{u(0)}=V_\ell \underline{u(\ell)}$, where $V_0,V_\ell$ are $k\times k$ matrices,
see  \cite{Eng13}. They can be equivalently expressed in our formalism by
\[
\gamma(u)\in \ker \begin{pmatrix}
V_0 & -V_\ell
\end{pmatrix}=:Y.
\]
The relevant conditions in \autoref{prop:main1} are hence whether
\begin{enumerate}[(i)]
\item the space $\ker\begin{pmatrix}V_0 & -V_\ell \end{pmatrix}=\{(\psi,\theta)\in \C^{2k}:V_0\psi=V_\ell \theta\}$ has dimension $k$ and
\item $V_0 \psi =V_\ell \theta$ implies $\sum_{\me\in \mE} c_\me |\psi_\me|^2 = \sum_{\me\in \mE} c_\me |\theta_\me|^2$  for all $\psi,\theta\in \C^k$.
\end{enumerate}
Both conditions are e.g.\ satisfied if $V_0,V_\ell=\Id$, which corresponds to transport on $k$ disjoint loops, in which case \autoref{prop:main1} confirms one's intuition that $\mathcal A$ with 
\begin{equation}\label{eq:dmaxVV}
D(\mathcal A):=\{u\in D_{\max}:u_\me(0)=u_\me(\ell_\me),\ \me\in\mE\}
\end{equation}
generates a strongly continuous group on    ${\bf L}^2(\mathcal G)$. With $k=2$, condition (i) is also fulfilled for $V_\ell=\Id$ if e.g.\ $V_0= \begin{psmallmatrix}0 & 1\\ 1 & 0\end{psmallmatrix}$, or $V_0=\frac12 \begin{psmallmatrix}1 & 1\\ 1 & 1\end{psmallmatrix}$; condition (ii) is satisfied in the former case if $c_{\me_1}=c_{\me_2}$ (hence we have by~\autoref{prop:main1} a group generator on a network which can be regarded as a loop of length $\ell_{\me_1}+\ell_{\me_2}$), but not in the latter:
  it will follow from the results in the next section that this operator, which is a prototype of those considered in~\cite{KraSik05}, still generates a strongly continuous \textit{semi}group on ${\bf L}^2(\mathcal G)$. This example shows   that our conditions   on $Y_\mv$ are tailored for \textit{unitary} group generation, as Corollary~\ref{cor:main1} shows. Some remedies to avoid such a problem will be discussed below.

We stress that the second above condition implies 
the invertibility of both $V_0,V_\ell$, which is proved in~\cite[Cor.~3.8]{Eng13} to be equivalent to the assertion that $\mathcal A$ with domain
\begin{equation*}
D(\mathcal A):=\{u\in D_{\max}:\gamma(u)\in \ker \begin{pmatrix}
V_0 & -V_\ell
\end{pmatrix}\}
\end{equation*}
is a group generator.
\end{exa}

\section{Contractive well-posedness and qualitative properties\label{sec:contr}}

Let us now discuss the more general situation in which the solutions to \eqref{eq:max1} are given by semigroups that are merely contractive. In this case,  the above computations show that much more general boundary conditions can be studied.   
Furthermore, we are also able to describe qualitative behavior of these solutions. We refer to \autoref{sec:examples} for several illustrative examples. 

Recall that the \emph{nonpositive   (resp.~nonnegative) isotropic cone} associated with a quadratic form $q:\C^k \to \R$  is the set of vectors $\xi\in \C^{k}$ such that
$q(\xi)\leq 0$   (resp.~$q(\xi)\geq 0$), see \autoref{def:nonpositiveisocone}.

\begin{theo}\label{prop:main2}
For all $\mv\in \mV$, let $Y_\mv$ be  a
 subspace of the nonpositive  isotropic cone associated with   the quadratic form 
$q_\mv$ given in \eqref{eq:qv-def} 
and assume that one of the following conditions is satisfied:
\begin{itemize}
\item \eqref{eq:basis} holds or
\item for all $\mv\in \mV$, the space  $T^{-1}_\mv Y^\perp_\mv$ is a subspace of the nonnegative isotropic cone associated with $q_\mv$.
\end{itemize}
Then the following assertions hold.
\begin{enumerate}[(1)]
\item $\mathcal A$ with domain
\[
\begin{split}
D(\mathcal A):=\left\{
u\in D_{\max}:~\gamma_\mv(u) \in Y_\mv \hbox{ for all } \mv\in \mV\right\}
\end{split}
\]
  generates a strongly continuous {quasi-contractive} semigroup $(e^{t\mathcal A})_{t\ge 0}$  in $\bL^2(\mathcal G)$
and the system \eqref{eq:max1} on $\mathcal G$ with transmission conditions
\be\label{eq:mainbcdissipative}
\gamma_\mv(u) \in Y_\mv,\quad \mv\in \mV,
\ee
is well-posed.

\item This semigroup is contractive if, additionally, ${(Q_\me N_\me)(x)}+{(Q_\me N_\me)^\ast (x)}-(Q_\me M_\me)'(x)$ is negative semi-definite for all $\me\in\mE$ and a.e.\ $x\in [0,\ell_\me]$.

\item Under the assumptions of (2), the energy in~\eqref{eq:energydef} is monotonically decreasing; in this case, there exists a projector commuting with $(e^{t\mathcal A})_{t\ge 0}$ whose null space is the set of strong stability of the semigroup and whose range is the closure of the set of periodic vectors under $(e^{t\mathcal A})_{t\ge 0}$.
\end{enumerate}
\end{theo}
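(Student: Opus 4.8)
The plan is to run all three assertions off the integration-by-parts identity \eqref{eq:Adiss}, which already separates the ``volume'' contributions from the boundary form $\tfrac12\sum_{\mv}q_\mv(\gamma_\mv(u))$. For assertion (1) I would first record quasi-dissipativity of $\mathcal A$: in \eqref{eq:Adiss} the two volume integrals are dominated by $\omega\|u\|^2$ exactly as in the estimate \eqref{eq:Adiss2}, while for $u\in D(\mathcal A)$ the boundary term $\tfrac12\sum_{\mv}q_\mv(\gamma_\mv(u))$ is nonpositive because each $\gamma_\mv(u)\in Y_\mv$ lies in the nonpositive isotropic cone of $q_\mv$. Hence $\Re(\mathcal A u,u)\le\omega\|u\|^2$, i.e.\ $\mathcal A-\omega$ is dissipative. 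In contrast to \autoref{prop:main1}, only $\mathcal A$ and not $-\mathcal A$ is dissipative, which is precisely why one obtains a semigroup rather than a group.

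For maximality I would split along the two alternatives. If \eqref{eq:basis} holds, I argue as in the maximality part of \autoref{prop:main1}: for $\lambda>\omega$ the resolvent equation $(\lambda-\mathcal A)u=f$ is solved, after variation of parameters as in \eqref{eq:varpar}, up to the free constants $(K_\me)_{\me\in\mE}$, and the conditions $\gamma_\mv(u)\in Y_\mv$ become a square linear system in these constants; its homogeneous version has only the trivial solution, since $(\lambda-\mathcal A)u=0$ forces $\lambda\|u\|^2=\Re(\mathcal A u,u)\le\omega\|u\|^2$ and hence $u=0$, so by \autoref{lem:basis} the system is invertible and $\Ran(\lambda-\mathcal A)=\bL^2(\mathcal G)$; Lumer--Phillips then applies. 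In the second alternative I instead use the adjoint: by \autoref{lem:adjoint} the domain $D(\mathcal A^\ast)$ is cut out by $\gamma_\mv(v)\in T_\mv^{-1}Y_\mv^\perp$, and \eqref{eq:A*diss} expresses $\Re(\mathcal A^\ast v,v)$ as a volume term bounded by $\omega\|v\|^2$ minus $\tfrac12\sum_{\mv}q_\mv(\gamma_\mv(v))$; since $T_\mv^{-1}Y_\mv^\perp$ lies in the nonnegative cone, this boundary contribution is $\le0$, so $\mathcal A^\ast-\omega$ is dissipative as well. With $\mathcal A$ densely defined and closed (closedness being verified as for \cite[(A.24)]{BasCor16}), \cite[Cor.~II.3.17]{EngNag00} applied to $\mathcal A-\omega$ yields generation of a quasi-contractive semigroup; well-posedness of \eqref{eq:max1} under \eqref{eq:mainbcdissipative} is then the standard reformulation.

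Assertion (2) is immediate from the same identity: writing $\Re\int Q_\me N_\me u_\me\cdot\bar u_\me=\tfrac12\int\bigl(Q_\me N_\me+(Q_\me N_\me)^\ast\bigr)u_\me\cdot\bar u_\me$ and using that $(Q_\me M_\me)'$ is Hermitian, \eqref{eq:Adiss} becomes $\Re(\mathcal A u,u)=\tfrac12\sum_{\me\in\mE}\int_0^{\ell_\me}\bigl(Q_\me N_\me+(Q_\me N_\me)^\ast-(Q_\me M_\me)'\bigr)u_\me\cdot\bar u_\me\,dx+\tfrac12\sum_{\mv\in\mV}q_\mv(\gamma_\mv(u))$. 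Under the stated negative semidefiniteness both summands are $\le0$, so $\mathcal A$ is genuinely dissipative and the semigroup is contractive. For the first claim of (3), differentiating along classical solutions gives $\frac{d}{dt}\mathcal E(t)=\Re(\mathcal A u(t),u(t))\le0$ exactly as in the proof of \autoref{cor:main1}, whence the energy in \eqref{eq:energydef} is monotonically decreasing.

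The substantial point is the spectral splitting in (3). Since $D(\mathcal A)\subset D_{\max}$ embeds compactly into $\bL^2(\mathcal G)$ by \autoref{lem:cr}, $\mathcal A$ has compact resolvent, so $\sigma(\mathcal A)$ is a discrete set of eigenvalues, contained in $\{\Re\lambda\le0\}$ by contractivity; moreover the semigroup is bounded on the reflexive space $\bL^2(\mathcal G)$ and therefore has relatively weakly compact orbits. The plan is to apply the Jacobs--de Leeuw--Glicksberg splitting theorem (see \cite[Sec.~V.2]{EngNag00}), which produces a projection $P$ commuting with $(e^{t\mathcal A})_{t\ge0}$ together with a decomposition $\bL^2(\mathcal G)=\Ran P\oplus\Ker P$ into a reversible and a flight subspace. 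It then remains to identify the two summands. On the reversible part the semigroup restricts to a bounded group and $\Ran P$ is the closed linear span of the eigenvectors of $\mathcal A$ with purely imaginary eigenvalue; as these eigenvalues form a discrete set of semisimple points (semisimplicity forced by boundedness), such eigenvectors are periodic or fixed and their closed span is exactly the closure of the periodic vectors. On the flight part the generator has no purely imaginary eigenvalue, and compactness of the resolvent makes $\sigma(\mathcal A)\cap i\R$ countable, so the Arendt--Batty--Lyubich--V\~{u} theorem upgrades the flight condition to genuine strong convergence, whence $\Ker P$ equals the strong-stability set. Carrying out these two identifications --- in particular securing the countability and absence-of-boundary-point-spectrum hypotheses that feed the Arendt--Batty--Lyubich--V\~{u} theorem --- is the main obstacle of the argument, and is exactly where the compact resolvent from \autoref{lem:cr} and the Hilbert-space contractivity are genuinely used.
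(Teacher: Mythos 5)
Your proposal is correct and follows essentially the same route as the paper, whose proof of assertions (1) and (2) consists precisely of your argument: quasi-dissipativity and the contractivity criterion are read off \eqref{eq:Adiss}, and maximality is obtained either as in \autoref{prop:main1} (under \eqref{eq:basis}) or as in \autoref{thm:new-for-dirac} (via \autoref{lem:adjoint}, dissipativity of $\mathcal A^\ast$ on $T_\mv^{-1}Y_\mv^\perp$, and \cite[Cor.~II.3.17]{EngNag00}). Two small deviations are worth recording. First, in the maximality step under \eqref{eq:basis} you invert $\lambda-\mathcal A$ for $\lambda>\omega$, whereas the paper (with $N=0$) inverts $\mathcal A$ itself, i.e.\ works at $\lambda=0$ via \eqref{eq:varpar}; in your variant \eqref{eq:basis} only guarantees that the boundary conditions yield a \emph{square} $k\times k$ system in the integration constants, while invertibility comes from the injectivity of $\lambda-\mathcal A$ forced by quasi-dissipativity --- so your clause ``by \autoref{lem:basis} the system is invertible'' misattributes the roles, harmlessly; if anything, your version interfaces with Lumer--Phillips more directly, since the range condition is verified at a point $\lambda>\omega$. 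Second, for assertion (3) the paper does no work at all: it cites \cite[Cor.~V.2.15]{EngNag00} together with the compact resolvent from \autoref{lem:cr}, and that corollary is exactly the statement you set out to re-prove. Your reconstruction via the Jacobs--de~Leeuw--Glicksberg splitting plus Arendt--Batty--Lyubich--V\~u is sound, but heavier than necessary: boundedness of the (here contractive) semigroup together with compactness of the resolvent makes every orbit relatively compact in norm, so for a flight vector $x$ one gets $e^{t_n\mathcal A}x\to 0$ in norm along some sequence, and then $\|e^{(t_n+s)\mathcal A}x\|\le\|e^{t_n\mathcal A}x\|$ for all $s\ge 0$ yields strong stability with no spectral input; this elementary argument is what the cited corollary encapsulates, so the countability and empty-boundary-point-spectrum hypotheses you identify as the main obstacle never need to be checked.
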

 
\begin{proof}
The proof  of the assertion leading to well-posedness is exactly the same as the ones of \autoref{prop:main1}, \autoref{cor:main1},  and \autoref{thm:new-for-dirac} for operator $\mathcal A$; the sufficient condition for contractivity of the semigroup can be read off {\eqref{eq:Adiss}.}
The {third} assertion is a direct consequence of~\cite[Cor.~V.2.15]{EngNag00} and \autoref{lem:cr}.
\end{proof}

\begin{rem}
{In the previous Theorem we may replace local boundary condition by global ones and then replacing either   \eqref{eq:basis} by \eqref{eq:basis_global}
or the second local condition by a global version. Hence}
Theorem~\ref{prop:main2} especially applies to the setting of~\cite[Appendix~A]{BasCor16},  since their boundary condition (A.8) can be written as $\gamma(u)\in \ker \mathcal K$ for some  $k\times 2k$-matrix $\mathcal K$. The proof of \cite[Thm.~A.1]{BasCor16}  shows that $\ker \mathcal K$ is contained in the nonpositive isotropic cone of the quadratic form \eqref{eq:quadglobal}.  Also the boundary condition~(2.2) in~\cite{Eng13} can be written as $\gamma(u)\in \ker \mathcal K$, and \eqref{eq:basis} is easily seen to be satisfied. Hence, the generation result in~\cite[Cor.~2.2]{Eng13} follows from Theorem~\ref{prop:main2}; this shows in particular that the well-posedness results for the transport equations in~\cite{KraSik05,MatSik07} are special cases of our general theory. Let us further stress that, as in 
\cite[Cor.~3.8]{Eng13}, we can chose  different  $Q_\me$ for $M_\me$ and -$M_\me$
to obtain generation of a group. We will review further, more advanced examples in Section~\ref{sec:examples}.
\end{rem}

\begin{rem} {(Global boundary conditions revisited)
  It turns out that the global condition \eqref{eq:basis_global} 
may fail, see \autoref{ex:posit} below.  But if all matrices $M_\me$ are diagonal or spatially constant and symmetric positive definite, 
we can replace the surjectivity of $\mathcal{A}$ by the surjectivity of
$\lambda \mathbb{I}-\mathcal{A}$ for $\lambda>0$ large enough. Indeed, using the same approach as in \autoref{prop:main1}, 
for any $\mf\in \bL^2(\mathcal G)$, we  look for a solution $u\in D(\mathcal A)$ of
\be\label{eq:sergepb1lambda}
\lambda u'_\me-M_\me(x)u'_\me(x)=\mf_\me(x)\quad \hbox{for $x\in (0,\ell_\me)$ and all } \me\in \mE.
\ee
By our assumptions, such a solution is given by
\[
u_\me(x)=e^{\lambda P_\ell(x)}K_\me-\int_0^x e^{\lambda (P_\ell(x)-P_\ell(y))}M^{-1}_\me(y)\mf_\me(y)\,dy,
\hbox{for all } x\in [0,\ell_\me],  \hbox{for all } \me\in \mE,
\]
with $K_\me\in \C^{k_\me}$ to be fixed, where 
\[
P_\ell(x):=\int_0^xM^{-1}_\me(y)\,dy,\quad x\in [0,\ell_\me], \ \me\in \mE.
\]
Obviously,  $P_\ell(0)=0$
and $P_\ell(\ell_\me)=\int_0^{\ell_\me}M^{-1}_\me(y)\,dy$ is symmetric positive definite. Hence, using the notation from \autoref{rk:globalbc},
the boundary condition
\eqref{eq:mainbc_global}
is equivalent to
\be\label{eq:sergewplambda1bis}
 \left(K, E_\lambda K\right)^\top\cdot \overline{\mW^{(i)}}=
  \left(\left(0 \right)_{\me\in\mE}, 
\left(\int_0^{\ell_\me} e^{\lambda (P_\ell({\ell_\me})-P_\ell(y))}M^{-1}_\me(y)\mf_\me(y)\,dy\right)_{\me\in\mE}\right)^\top
\cdot 
\overline{\mW^{(i)}}\quad  \hbox{for all } i\in I.
\ee
where $K=(K_\me)_{\me\in \mE}$ and
$E_\lambda=\diag \left(e^{\lambda P_\ell(\ell_\me)}\right)_{\me\in\mE}$. Setting $L=E_\lambda K$, we have equivalently
\be\label{eq:sergewplambda1ter}
 \left( E_{-\lambda} L, L\right)^\top\cdot \overline{\mW^{(i)}}=
  \left(\left(0 \right)_{\me\in\mE}, 
\left(\int_0^{\ell_\me} e^{\lambda (P_\ell(\ell_\me)-P_\ell(y)}M^{-1}_\me(y)\mf_\me(y)\,dy\right)_{\me\in\mE}\right)^\top
\cdot 
\overline{\mW^{(i)}}\quad \hbox{for all } i\in I.
\ee
Hence if $\dim Y^\perp = k$, we find a $k\times k$ linear system and a unique solution $L$ and then $K$ exists if and only if the determinant of the associated matrix $A_\lambda$ is different from zero.
But if for any vectors $(y, z)^\top\in \mathbb{C}^{2k}$, with $y, z \in \mathbb{C}^{k}$
we set 
\[
 {\Pi_0}(y, z)^\top=y\quad \hbox{and}\quad  {\Pi_\ell}(y, z)^\top=z,
\]
we see that 
\[
 \left( E_{-\lambda} L, L\right)^\top\cdot \overline{\mW^{(i)}}=
 E_{-\lambda} L\cdot \overline{{\Pi_0}\mW^{(i)}}+
L\cdot \overline{{\Pi_\ell}\mW^{(i)}}=L\cdot \overline{ E_{-\lambda}^\star {\Pi_0}\mW^{(i)}+ {\Pi_\ell}\mW^{(i)}},
\]
where $E_{-\lambda}^\star=\diag \left(e^{-\lambda {P_\ell}(\ell_\me)^\star}\right)_{\me\in\mE}$.
From this expression we see that $A_\lambda$ can be split up as
\[
A_\lambda= R_\lambda+A_\infty,
\]
where  $A_\infty$ is the limit of $A_\lambda$ as $\lambda$ goes to infinity and is simply the matrix  whose $i$th row is $({\Pi_\ell}\mW^{(i)})^\top$.
Further, since ${P_\ell}(\ell_\me)^\star$ are symmetric positive definite, one can easily see that
\[
\lim_{\lambda\to \infty} \|R_\lambda\|=0.
\]
Hence, if we assume that $A_\infty$ is invertible, $A_\lambda$ will be invertible as well for $\lambda$ large enough, which allows to conclude the surjectivity of $\lambda \mathbb{I}-\mathcal{A}$ for $\lambda>0$ large enough. In conclusion, as $A_\infty$  is invertible if and only if $\dim  \Pi_\ell Y^\perp = k$,
 maximality holds  if 
\begin{equation}
\label{eq:basis_global_infty}
\dim Y^\perp = k =  \dim  \Pi_\ell Y^\perp.
\end{equation}
This actually means that maximality holds as soon as maximality holds if we replace the boundary condition 
\eqref{eq:mainbc_global} by the one
\[
\underline{u (\ell)}\in \Pi_\ell Y^\perp,
\]
at the endpoint $\ell$ only; in particular, dropping any condition at $0$. 
A local version of this approach can be stated, we let the details to the reader.
}
\end{rem}
 
\begin{rem}\label{rem:control}
Our well-posedness results can be also regarded as a first step towards applications to control theory. For example, \cite[Prop. VI.8.5]{EngNag00}  prevents the control system 
\begin{equation}\tag{$\Sigma(\mathcal A,\mathcal B)$}
\left\{
\begin{split}
\dot{u}_\me(t) &= M_\me u'_\me(t) +N_\me u_\me(t) + B_\me y_\me(t),\qquad  t\ge 0, \me\in\mE,\\
u_\me(0)&=f_\me
\end{split}
\right.
\end{equation} from being exactly 2-controllable for any control operator $\mathcal B=(B_\me)_{\me\in\mE}$ mapping the vertex space $\C^{{|\mV|}}$ to $L^2(\mathcal G)$, for any finite time horizon $t<\infty$. However, \cite[Cor. VI.8.11]{EngNag00} can be invoked to show that, provided the assumptions of Theorem~\ref{prop:main2}.(2) are satisfied and hence $\mathcal A$ generates a contractive semigroup, the control system $(\Sigma(\mathcal A,\mathcal B))$ will be approximately 2-controllable if and only if $\bigcap_{n\in\mathbb N} \ker \mathcal B^\ast (\Id-\mathcal A^\ast)^n = {0}$, where $\mathcal A^*$ is the adjoint of $\mathcal A$ given in Lemma~\ref{lem:adjoint} and $\mathcal B:\C^{{|\mV|}} \to L^2(\mathcal G)$ is any control operator.

Studying processes on a metric graph, boundary control problems with control imposed in the vertices are of special interest. A semigroup approach to such problems that enables an explicit description of the associated (even exact or positive) reachability space as presented in \cite{EKNS08,EKKNS10,EK17} could be adapted for the present situation. 

Further, as in \cite{Nic:2017} in case of generation of a contraction semigroup, some stabilization results can be considered using different approaches, like the use of observability estimates \cite{dagerzuazua,AMbook}, frequency domain approach \cite{pruss:84,borichev:10}, or port-Hamiltonian techniques \cite{JacZwa12}.
\end{rem}

Let us continue by studying qualitative properties of the semigroup generated by $\mathcal A$. In particular, let $C$ be a closed and convex subset of $\C$ and write {
\begin{equation}\label{eq:lpc}
\bL^2(\mathcal G;C):=\{u\in \bL^2(\mathcal G): u_\me(x)\in C^{k_\me}\ \hbox{ for a.e. }x\in (0,\ell_\me)\hbox{ and all }\me \in\mE\}
\end{equation}
A semigroup $(T(t))_{t\ge 0}$ on $L^2(\mathcal G)$  is called \emph{real} (resp., \emph{positive})
if each operator $T(t)$ leaves $\bL^2(\mathcal G;\R)$ (resp., $\bL^2(\mathcal G;\R_+)$)
invariant.} Moreover, for  a closed and convex subset $K$ of a Hilbert space $H$
 {with inner product $ \langle \cdot, \cdot \rangle_H$ and associated norm $\|\cdot\|_H$},  the \emph{minimizing projector  $P_K$ onto $K$} assigns to each $u\in H$ the unique element $P_K u\in K$ satisfying 
 \[ \| u - P_K u \|_{ H} = \min \{\| u - w \|_{H} : w\in K\}\]
 or, equivalently (see \cite[Thm. 5.2]{Bre10}), $P_K u=z\in K$ is the unique element in $K$ such that
 \begin{equation}\label{eq:P_K}
\Re \langle w-z, u-z \rangle_{H}  \le 0\qquad \hbox{for all }w\in K.
 \end{equation}

We will use a generalization of Brezis' classical result for invariance 
 under the semigroup generated by a subdifferential. In the linear case,~\cite[Thm.~2.4]{Yok01} can be formulated as follows.
\begin{lemma}\label{lem:yok}
Let $H$ be a complex Hilbert space {with inner product $ \langle \cdot, \cdot \rangle_H$ and associated norm $\|\cdot\|_H$}, $K$  a closed and convex subset of $H$ and $P_K$ the minimizing projector onto $K$.
Let $A$ be an $\omega$-quasi-m-dissipative operator on $H$ for some $\omega\in \mathbb R$ and $(T(t))_{t\ge 0}$ the strongly continuous semigroup generated by $A$. Then $K$ is invariant under $(T(t))_{t\ge 0}$ if and only if 
\begin{equation}\label{eq:yok}
\Re \langle Au,u-P_K u\rangle_{H}  \le \omega \|u-P_K u\|^2_{ H} \qquad \hbox{for all }u\in D(A).
\end{equation}

If in particular $\omega=0$, i.e., $A$ is dissipative, and $P_K u\in D(A)$ for all $u\in D(A)$, then the invariance of $K$ under $(T(t))_{t\ge 0}$ is equivalent to
\[
\Re \langle A P_K u,u-P_K u\rangle_{H}  \le 0\qquad \hbox{for all }u\in D(A).
\]
\end{lemma}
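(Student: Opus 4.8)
The plan is to work entirely with the \emph{squared distance function} $d_K(v)^2:=\|v-P_Kv\|_H^2$ and to monitor its evolution along the orbits of the semigroup, thereby reducing the claimed equivalence to a Gronwall-type differential inequality. Throughout I regard $H$ as a real Hilbert space with scalar product $\Re\langle\cdot,\cdot\rangle_H$, so that the relevant ``gradient'' computations make sense. The only structural facts I will use are the variational inequality \eqref{eq:P_K}, i.e.\ $\Re\langle w-P_Ku,u-P_Ku\rangle_H\le 0$ for all $w\in K$; the quasi-contractivity estimate $\|T(t)\|\le e^{\omega t}$ (which is merely a restatement of $\omega$-quasi-dissipativity of $A$); and the standard semigroup fact that $t\mapsto T(t)u$ is differentiable with $\frac{d}{dt}T(t)u=AT(t)u$ for $u\in D(A)$.

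First I would establish the key \emph{squeeze identity}: for every $u\in D(A)$,
\[
\Re\big\langle Au,u-P_Ku\big\rangle_H=\lim_{t\to0^+}\frac{1}{2t}\Big(d_K(T(t)u)^2-d_K(u)^2\Big).
\]
Writing $z:=P_Ku$ and $z_t:=P_K(T(t)u)$ and expanding $\|T(t)u-z_t\|^2$, the variational inequality \eqref{eq:P_K} applied once at $u$ with the competitor $z_t\in K$ yields the lower bound $\tfrac12 d_K(T(t)u)^2-\tfrac12 d_K(u)^2\ge \Re\langle u-z,T(t)u-u\rangle_H$, while the trivial competitor $z\in K$ for $z_t$ gives the matching upper bound $\tfrac12 d_K(T(t)u)^2\le\tfrac12\|T(t)u-z\|^2$. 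Dividing by $t$ and letting $t\to0^+$ (using $\tfrac1t(T(t)u-u)\to Au$ and $\|T(t)u-u\|^2/t\to0$) squeezes the difference quotient to $\Re\langle Au,u-P_Ku\rangle_H$. This step is the technical heart: it reproves, \emph{along the orbit only}, that $\tfrac12 d_K^2$ behaves like a $C^1$ function with gradient $\mathrm{Id}-P_K$, without any appeal to smoothness of $P_K$, which is merely Lipschitz.

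Granting this, the equivalence involving \eqref{eq:yok} splits into two steps. For \emph{condition $\Rightarrow$ invariance}, I apply the squeeze identity at $v=T(t)u$ and invoke \eqref{eq:yok} at $v$ to obtain $\frac{d^+}{dt}\varphi(t)\le 2\omega\,\varphi(t)$ for the continuous function $\varphi(t):=d_K(T(t)u)^2$; Gronwall's lemma for right Dini derivatives then gives $d_K(T(t)u)\le e^{\omega t}d_K(u)$. For $u\in D(A)\cap K$ the right-hand side vanishes, so $T(t)u\in K$, and a density argument (approximating $u\in K$ by $u_n\in D(A)$ and using the $1$-Lipschitz continuity of $d_K$ with strong continuity of the semigroup) extends invariance to all of $K$. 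For \emph{invariance $\Rightarrow$ condition}, invariance and quasi-contractivity give $d_K(T(t)u)\le\|T(t)(u-P_Ku)\|\le e^{\omega t}d_K(u)$, whence $\tfrac{1}{2t}(d_K(T(t)u)^2-d_K(u)^2)\le\tfrac{1}{2t}(e^{2\omega t}-1)d_K(u)^2\to\omega\,d_K(u)^2$; combined with the squeeze identity this is precisely \eqref{eq:yok}.

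Finally, for the special case $\omega=0$ with $P_Ku\in D(A)$, I would deduce the alternative formulation from the first part. Since $A$ is dissipative and $u-P_Ku\in D(A)$, the inequality $\Re\langle A(u-P_Ku),u-P_Ku\rangle_H\le0$ rearranges to $\Re\langle Au,u-P_Ku\rangle_H\le\Re\langle AP_Ku,u-P_Ku\rangle_H$, so $\Re\langle AP_Ku,u-P_Ku\rangle_H\le0$ implies \eqref{eq:yok} with $\omega=0$, hence invariance. Conversely, if $K$ is invariant then $T(t)P_Ku\in K$, and \eqref{eq:P_K} with competitor $w=T(t)P_Ku$ gives $\Re\langle T(t)P_Ku-P_Ku,u-P_Ku\rangle_H\le0$; dividing by $t$ and letting $t\to0^+$ (legitimate because $P_Ku\in D(A)$) produces $\Re\langle AP_Ku,u-P_Ku\rangle_H\le0$. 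The main obstacle is the squeeze identity of the second paragraph: everything else is routine bookkeeping, whereas there one must extract the correct one-sided derivative of a nonsmooth composite using nothing beyond the variational inequality for $P_K$.
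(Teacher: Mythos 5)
Your proof is correct, but note that the paper itself never proves this lemma: it is stated as the linear-case reformulation of Yokota's invariance theorem \cite[Thm.~2.4]{Yok01} (a generalization of Brezis' classical criterion for semigroups generated by subdifferentials), and the authors rely on that citation for both assertions. What you have done is supply the missing self-contained argument for the linear setting, and it holds up. Writing $d_K(v):=\|v-P_Kv\|_H$, your squeeze identity is correctly derived: the lower bound uses only the variational inequality \eqref{eq:P_K} at $u$ with competitor $P_K(T(t)u)$ together with the elementary estimate $\|a\|_H^2+\|c\|_H^2+2\Re\langle a,c\rangle_H=\|a+c\|_H^2\ge 0$, while the upper bound is the trivial competitor estimate; the Dini--Gronwall step (applied to $\varphi(t)=d_K(T(t)u)^2$, using that $T(t)u\in D(A)$ for $u\in D(A)$) and the density argument extending $d_K(T(t)v)\le e^{\omega t}d_K(v)$ from $v\in D(A)$ to all of $H$ are standard; the converse direction correctly combines invariance with the Lumer--Phillips bound $\|T(t)\|\le e^{\omega t}$, which is indeed what $\omega$-quasi-$m$-dissipativity delivers; and both directions of the special case $\omega=0$, $P_Ku\in D(A)$, check out (dissipativity applied to $u-P_Ku\in D(A)$ in one direction, differentiating the variational inequality with competitor $T(t)P_Ku$ in the other). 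The trade-off between the two routes: the paper's citation covers the general nonlinear framework in which Yokota's theorem lives and keeps the exposition short, whereas your argument makes the linear case entirely elementary and transparent -- it needs nothing beyond the variational characterization of the metric projection, quasi-contractivity, and a Gronwall lemma for right Dini derivatives -- and would render the paper self-contained at this point.
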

We can thus describe further properties of the semigroup generated by $\mathcal A$ in terms of the matrices $Q_\me,M_\me,N_\me$, and the boundary conditions. 
We are interested in the convex subsets of the form  $K=\bL^2(\mathcal G;C)$ where  $C\subset \C$ is a closed interval, e.g., $C=\R$ or $C=\R_+$. 
Let us first  relate the minimizing projector $P_K^Q$ with respect to the inner product \eqref{eq:prod?} on ${\bf L}^2(\mathcal G)$ to the minimizing projector $P_K$ with respect to the standard inner product
\begin{equation}\label{eq:st-prod}
\langle u,v\rangle:=\sum_{\me\in\mE}\int_0^{\ell_\me}
 u_\me (x)\cdot \bar v_\me(x)\,dx, \qquad 
u,v\in \bL^2(\mathcal G).
\end{equation}

\begin{lemma}\label{lem:project}
Assume $Q_\me^{\frac{1}{2}}(x)$ to be bijective for all $\me\in\mE$ and all $x\in [0,\ell_\me]$  as a map on $C$, i.e., $Q_\me^{\frac{1}{2}}(x)(C)=C$.
Then
the  minimizing projector   $P^Q_K$ with respect to the inner product \eqref{eq:prod?} onto $K=\bL^2(\mathcal G;C)$
is given by
\begin{equation}\label{eq:PQK}
P^Q_K =Q^{-\frac{1}{2}} P_K Q^{\frac{1}{2}} 
\end{equation}
where $Q:=\diag(Q_\me)_{\me\in \mE}$ is a block diagonal matrix and $P_K$ is the minimizing projector with respect to the standard inner product  \eqref{eq:st-prod}.
\end{lemma}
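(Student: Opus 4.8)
The plan is to reduce the weighted minimization to the standard one via the isometry induced by $Q^{1/2}$. Since each $Q_\me(x)$ is Hermitian and uniformly positive definite with Lipschitz dependence on $x$, it admits a Hermitian positive definite square root $Q_\me^{\frac12}(x)$ depending continuously on $x$; hence the block-diagonal multiplication operator $Q^{\frac12}=\diag(Q_\me^{\frac12})_{\me\in\mE}$ and its inverse $Q^{-\frac12}$ are bounded on $\bL^2(\mathcal G)$, with bounds coming from uniform positive definiteness. First I would record that $Q^{\frac12}$ is an isometry from $(\bL^2(\mathcal G),(\cdot,\cdot))$ onto $\bL^2(\mathcal G)$ equipped with the standard inner product \eqref{eq:st-prod}: by the Hermitian property of $Q_\me$ one checks that the integrand of $\langle Q^{\frac12}u,Q^{\frac12}v\rangle$ equals $u_\me\cdot\overline{Q_\me v_\me}$, which is exactly the integrand of $(u,v)$, so $(u,v)=\langle Q^{\frac12}u,Q^{\frac12}v\rangle$ for all $u,v\in\bL^2(\mathcal G)$.

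Next I would translate the pointwise hypothesis into a statement about $K=\bL^2(\mathcal G;C)$. For $u\in K$ one has $u_\me(x)\in C^{k_\me}$ a.e., whence $Q_\me^{\frac12}(x)u_\me(x)\in Q_\me^{\frac12}(x)(C^{k_\me})=C^{k_\me}$ by assumption, so $Q^{\frac12}u\in K$; since $Q_\me^{\frac12}(x)$ is bijective on $C^{k_\me}$, its inverse preserves $C^{k_\me}$ as well, and therefore $Q^{\frac12}(K)=K$ and likewise $Q^{-\frac12}(K)=K$. In particular the map $w\mapsto Q^{\frac12}w$ is a bijection of $K$ onto itself.

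The core step is then a variational verification using \eqref{eq:P_K}. Fix $u\in\bL^2(\mathcal G)$ and set $z:=Q^{-\frac12}P_KQ^{\frac12}u$; since $P_KQ^{\frac12}u\in K$ and $Q^{-\frac12}(K)=K$, we have $z\in K$. It suffices to show $\Re(w-z,u-z)\le 0$ for all $w\in K$. Writing $w':=Q^{\frac12}w$, which ranges over $Q^{\frac12}(K)=K$ as $w$ ranges over $K$, and using $Q^{\frac12}z=P_KQ^{\frac12}u$, the isometry identity gives
\[
(w-z,u-z)=\langle Q^{\frac12}(w-z),Q^{\frac12}(u-z)\rangle=\langle w'-P_KQ^{\frac12}u,\,Q^{\frac12}u-P_KQ^{\frac12}u\rangle.
\]
The real part of the right-hand side is nonpositive by the characterization \eqref{eq:P_K} of the standard projector $P_K$ applied to the vector $Q^{\frac12}u$ with test element $w'\in K$. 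Hence $z$ satisfies \eqref{eq:P_K} for the weighted inner product, so $P^Q_Ku=z=Q^{-\frac12}P_KQ^{\frac12}u$, which is \eqref{eq:PQK}.

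The only genuine subtlety, and the point I would treat most carefully, is the correct reading of the hypothesis $Q_\me^{\frac12}(x)(C)=C$ at the level of the convex set $K$: one must extract from it both $Q^{\frac12}(K)=K$ and $Q^{-\frac12}(K)=K$, as the argument hinges on $w\mapsto Q^{\frac12}w$ being a bijection of $K$. Measurability of $x\mapsto Q_\me^{\frac12}(x)$ and the uniform bounds on $Q^{\pm\frac12}$ are routine but should be noted to guarantee $z\in\bL^2(\mathcal G)$ and that the isometry is well defined.
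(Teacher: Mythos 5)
Your proof is correct and follows essentially the same route as the paper's: both reduce the weighted variational characterization \eqref{eq:P_K} to the standard one via the Hermitian square root $Q^{\frac12}$, using its bijectivity on $C$ to identify $Q^{\frac12}(K)$ with $K$; the paper runs the equivalence starting from $z=P^Q_Ku$, while you verify the candidate $z=Q^{-\frac12}P_KQ^{\frac12}u$ directly, which is the same argument in the opposite direction.
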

\begin{proof} 	
By \eqref{eq:P_K}, $P^Q_K u=:z$ is the unique element in $K$ such that
\begin{equation}\label{eq:sergepos1}
\sum_{\me\in\mE}\int_0^{\ell_\me}
Q_\me(x) (w_\me (x)-z_\me(x))\cdot (\bar u_\me (x)- \bar z_\me(x))\,dx\leq 0 \qquad\hbox{for all }
w\in K.
\end{equation}
As $Q_\me(x)$ is symmetric positive definite, it admits a square root $Q_\me(x)^{\frac{1}{2}}$ which is still 
symmetric positive definite (and is of class $H^1$ as a function of $x$). Hence~\eqref{eq:sergepos1} is equivalent to
\begin{equation}\label{eq:sergepos2}
\sum_{\me\in\mE}\int_0^{\ell_\me}
Q_\me(x)^{\frac{1}{2}} (w_\me (x)-z_\me(x))\cdot Q_\me(x)^{\frac{1}{2}}(\bar u_\me (x)-\bar z_\me(x))\,dx\leq 0 \qquad 
\hbox{for all } w\in K.
\end{equation}
Since $Q_\me^{\frac{1}{2}}$ is bijective on $C$, by setting $v_\me:=Q_\me^{\frac{1}{2}}u_\me$ and  $\tilde w_\me:=Q_\me^{\frac{1}{2}}w_\me$, 
we may equivalently re-write~\eqref{eq:sergepos2} as
\[ \Re \langle \tilde w-Q^{\frac{1}{2}} z, v-Q^{\frac{1}{2}} z \rangle \le 0\qquad \hbox{for all }\tilde w\in K.
\]
By \eqref{eq:P_K} we obtain
$Q^{\frac{1}{2}} z = P_K v$ and \eqref{eq:PQK} follows.
\end{proof}

In many applications $Q_\me$, and thus also $ Q_\me^{\frac{1}{2}} $ and $Q_\me^{-\frac{1}{2}}$, are  real-valued. 
In such a  case, \eqref{eq:PQK} for  $C=\R$ and hence $K=L^2(\mathcal G;\R)$ yields
\begin{equation}\label{eq:qrr}
P^Q_{K} u=Q^{-\frac{1}{2}} \Re\left( Q^{\frac{1}{2}} u\right) = \Re u.
\end{equation}

\begin{prop}\label{prop:real}
Under the assumptions of \autoref{prop:main2}, let
\begin{equation}\label{eq:realcond-0}
\Re \xi\in  \bigoplus_{\mv\in\mV} Y_\mv \hbox{ for all } \xi\in  \bigoplus_{\mv\in\mV
} Y_\mv
\end{equation}
and the matrix-valued mapping $Q_\me $ be real-valued for all $\me\in \mE$. Then  the semigroup generated by $\mathcal A$ is real if and only if 
the matrix-valued mappings $M_\me,N_\me$ are real-valued for all $\me\in\mE$.
\end{prop}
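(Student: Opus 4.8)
The plan is to read off reality of the semigroup from the abstract invariance criterion of \autoref{lem:yok}, applied to the closed convex set $K:=\bL^2(\mathcal G;\R)$, and then to reduce the resulting inequality to an explicit, boundary-free quadratic expression in $\Re u$ and $\Im u$. Since every $Q_\me$ is real-valued, its square root $Q_\me^{\frac12}$ is real and maps $\R^{k_\me}$ bijectively onto itself, so the hypotheses of \autoref{lem:project} hold with $C=\R$ and \eqref{eq:qrr} identifies the minimizing projector onto $K$ in the inner product \eqref{eq:prod?} as $P_K^Q u=\Re u$. Because $\mathcal A$ is $\omega$-quasi-$m$-dissipative by \autoref{prop:main2}, \autoref{lem:yok} tells us that the semigroup is real if and only if, writing $g:=\Im u$ so that $u-P_K^Q u=\imath g$,
\[
\Re(\mathcal A u,\imath g)\le \omega\|g\|^2\qquad\text{for all }u\in D(\mathcal A),
\]
where $(\cdot,\cdot)$ and $\|\cdot\|$ are the inner product and norm of \eqref{eq:prod?}.

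The computational core is to expand this left-hand side. Writing $u=f+\imath g$ with $f:=\Re u$ and $g:=\Im u$ real-valued, splitting $M_\me=M_\me^{(1)}+\imath M_\me^{(2)}$ and $N_\me=N_\me^{(1)}+\imath N_\me^{(2)}$ into real and imaginary parts, and using that $Q_\me,f,g$ are real while $\overline{\imath g}=-\imath g$, a direct calculation gives
\[
\Re(\mathcal A u,\imath g)=\sum_{\me\in\mE}\int_0^{\ell_\me} Q_\me\bigl(M_\me^{(2)}f'_\me+N_\me^{(2)}f_\me\bigr)\cdot g_\me\,dx+\sum_{\me\in\mE}\int_0^{\ell_\me} Q_\me\bigl(M_\me^{(1)}g'_\me+N_\me^{(1)}g_\me\bigr)\cdot g_\me\,dx,
\]
where the second sum is exactly $\Re(\mathcal A g,g)$ evaluated with the real parts of the coefficients.

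For sufficiency, if all $M_\me,N_\me$ are real then $M_\me^{(2)}=N_\me^{(2)}=0$, the first sum vanishes, and the expression collapses to $\Re(\mathcal A g,g)$. Here hypothesis \eqref{eq:realcond-0} enters: each $Y_\mv$ being a complex subspace, $\Re\xi\in Y_\mv$ forces $\Im\xi=\Re(-\imath\xi)\in Y_\mv$ as well, so $\gamma_\mv(g)=\Im\gamma_\mv(u)\in Y_\mv$ and therefore $g\in D(\mathcal A)$. Quasi-dissipativity of $\mathcal A$ then yields $\Re(\mathcal A g,g)\le\omega\|g\|^2$, which is precisely the invariance criterion, and $K$ is invariant.

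For necessity, assume the displayed inequality holds for all $u\in D(\mathcal A)$, and test with $u=tf+\imath g$ for $t\in\R$ and $f,g\in\bigoplus_{\me\in\mE}\mathcal D(0,\ell_\me)^{k_\me}$; these lie in $D(\mathcal A)$ since their traces vanish and hence belong to every $Y_\mv$, while $\Im u=g$ is independent of $t$. The left-hand side then becomes affine in $t$, equal to $t\alpha+\beta$ with $\alpha=\sum_{\me\in\mE}\int_0^{\ell_\me}Q_\me\bigl(M_\me^{(2)}f'_\me+N_\me^{(2)}f_\me\bigr)\cdot g_\me\,dx$; since $t\alpha+\beta\le\omega\|g\|^2$ must hold for all $t$, necessarily $\alpha=0$. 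Varying $g$ and inverting the positive definite $Q_\me$ gives $M_\me^{(2)}f'_\me+N_\me^{(2)}f_\me=0$ a.e.\ for every test $f$; choosing $f$ locally constant equal to an arbitrary vector on a subinterval annihilates the $f'$-term and forces $N_\me^{(2)}=0$, after which $M_\me^{(2)}f'_\me=0$ for all test $f$ forces $M_\me^{(2)}=0$, so $M_\me,N_\me$ are real. I expect this necessity step to be the main obstacle: one must isolate the cross term $\alpha$ (the affine-in-$t$ trick) and then separate the two conclusions $N_\me^{(2)}=0$ and $M_\me^{(2)}=0$ out of a single vanishing integral, bearing in mind that $N_\me$ is merely $L^\infty$ and admits no pointwise values, so the localization through plateau and bump test functions is what makes the argument rigorous.
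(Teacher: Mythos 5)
Your proof is correct, and it follows the paper's general strategy (invariance via \autoref{lem:yok} applied to $K=\bL^2(\mathcal G;\R)$, with $P^Q_Ku=\Re u$ from \autoref{lem:project} and \eqref{eq:qrr}), but the execution differs in two genuine ways. First, you invoke the general $\omega$-criterion of \autoref{lem:yok}, testing $\Re(\mathcal A u,\imath\,\Im u)\le\omega\|\Im u\|^2$, whereas the paper first normalizes $\omega=0$ and uses the second criterion involving $\mathcal A P_Ku$, which requires $\Re u\in D(\mathcal A)$; interestingly, the hypothesis \eqref{eq:realcond-0} enters your argument dually, to ensure $\Im u\in D(\mathcal A)$ so that quasi-dissipativity can be applied to the second summand in your expansion, rather than to legitimize $\Re u$ as a test element. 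Second, your treatment of necessity is different: the paper flips signs \`a la Ouhabaz to upgrade the inequality to the reality condition \eqref{eq:cond-real-ima}, and then delegates the extraction of ``$M_\me,N_\me$ real'' to the separate \autoref{lem:cond4.4}, whose proof uses rescaled bumps $\varphi(n(x-x_0))e_i$ and a limit $n\to\infty$ to recover first $Q_\me M_\me$ and then $Q_\me N_\me$; you instead isolate the cross term by an affine-in-$t$ argument with $u=tf+\imath g$, kill $N^{(2)}_\me$ with plateau (locally constant) test functions, and only then conclude $M^{(2)}_\me=0$. Your route is self-contained and arguably cleaner on the regularity issue (it never evaluates the merely $L^\infty$ coefficient $N_\me$ at a point, while the paper's \autoref{lem:cond4.4} does so and implicitly needs a Lebesgue-point justification there); what the paper's route buys is the reusable intermediate statement \autoref{lem:cond4.4}, which is invoked again independently (e.g.\ for the Dirac equation in \autoref{sec:dirac}), and a structure that runs parallel to the positivity proof of \autoref{prop:posit} and \autoref{lem:serge-diagon}.
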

\begin{proof}
We use \autoref{lem:yok} for the convex subset of real-valued functions $K=\bL^2(\mathcal G,\R)$ and the projector $P^Q_K u = \Re u$ obtained in \eqref{eq:qrr}. Also observe that we can without loss of generality assume 
$\omega=0$, since reality of a semigroup is invariant under scalar perturbations of its generator. Now, it follows from~\eqref{eq:realcond-0}  that $P^Q_K u\in D(\mathcal A)$ whenever $u\in D(\mathcal A)$.  We deduce that reality of the semigroup is equivalent to
\[
\Re ( \mathcal A \Re u, \imath \Im u)\le 0\quad \hbox{for all }u\in D(\mathcal A).
\]
By applying the same trick as in the proof of~\cite[Prop.~2.5]{Ouh05}, that is by plugging $-\Re u + \imath \Im u$ into the above inequality,  we obtain
\[
 ( \mathcal A \Re u, \Im u)=\sum_{\me\in\mE}\int_0^{\ell_\me}
\left( \left( Q_\me M_\me \right)\frac{d\Re u_\me }{d x}+  \left( Q_\me N_\me \right) \Re u_\me \right)\cdot  \Im u_\me \,dx \in \R \quad \text{for all }u\in D(\mathcal A),
\]
which by a simple localization argument is in turn equivalent to
\begin{equation}\label{eq:cond-real-ima}
(Q_\me M_\me) \frac{du_\me}{dx} + (Q_\me N_\me)  u_\me\hbox{ is real-valued  for all real-valued }u\in D(\mathcal A) \hbox{ and all }\me\in\mE.
\end{equation}
The conclusion then follows from \autoref{lem:cond4.4} below that yields an equivalent, but easier to check, formulation of \eqref{eq:cond-real-ima}.
\end{proof}

\begin{lemma}\label{lem:cond4.4}
Assume  the matrix-valued mapping
$Q_\me$ to be real-valued for all $\me\in \mE$. Then \eqref{eq:cond-real-ima} holds if and only if the matrix-valued mappings $M_\me,N_\me$ are real-valued for all $\me\in\mE$.
\end{lemma}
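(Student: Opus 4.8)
The plan is to prove the two implications separately, the forward one being immediate and the converse resting on a judicious choice of real test functions. Writing $M_\me=\Re M_\me+\imath\,\Im M_\me$ and $N_\me=\Re N_\me+\imath\,\Im N_\me$ with real matrix-valued entries, the ``if'' part is clear: if both $\Im M_\me$ and $\Im N_\me$ vanish, then $Q_\me M_\me$ and $Q_\me N_\me$ are real-valued (as $Q_\me$ is real-valued), so $(Q_\me M_\me)u_\me'+(Q_\me N_\me)u_\me$ is real whenever $u_\me$ is. For the converse I would first rewrite \eqref{eq:cond-real-ima} in pointwise form: since $u_\me$ is real-valued, the imaginary part of $(Q_\me M_\me)u_\me'+(Q_\me N_\me)u_\me$ equals $Q_\me\bigl((\Im M_\me)u_\me'+(\Im N_\me)u_\me\bigr)$, and because $Q_\me(x)$ is invertible for every $x$ (it is uniformly positive definite), \eqref{eq:cond-real-ima} is equivalent to
\[
(\Im M_\me)(x)\,u_\me'(x)+(\Im N_\me)(x)\,u_\me(x)=0\qquad\text{for a.e.\ }x,\ \text{for all real-valued }u_\me\in D(\mathcal A).
\]

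Then I would exploit the freedom in the test functions. The key observation is that $\bigoplus_{\me\in\mE}\mathcal{D}(0,\ell_\me)^{k_\me}\subset D(\mathcal A)$, since a function supported away from the endpoints satisfies $\gamma_\mv(u)=0\in Y_\mv$ for every $\mv$, and such functions leave $u_\me$ and $u_\me'$ essentially free. Concretely, I would kill $\Im N_\me$ first: choosing, around any interior point $x_0$, a real test function that is identically equal to a constant vector $c$ on a small interval $I\ni x_0$ forces $u_\me'\equiv 0$ there, so the displayed identity reduces to $(\Im N_\me)(x)\,c=0$ for a.e.\ $x\in I$. Letting $c$ range over a basis of $\R^{k_\me}$ and covering $(0,\ell_\me)$ by such intervals yields $\Im N_\me=0$ a.e., i.e.\ $N_\me$ is real-valued. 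Once this is known, the identity becomes $(\Im M_\me)(x)\,u_\me'(x)=0$ a.e.; now choosing, near $x_0$, a real test function that coincides with the affine map $x\mapsto(x-x_0)c$ on a neighbourhood makes $u_\me'\equiv c$ there, whence $(\Im M_\me)(x)\,c=0$ for a.e.\ $x$ in that neighbourhood. As $M_\me$ is Lipschitz continuous by \autoref{assum}.(1), $\Im M_\me$ is continuous, so this a.e.\ vanishing upgrades to $(\Im M_\me)(x_0)\,c=0$; varying $x_0$ and $c$ gives $\Im M_\me=0$, i.e.\ $M_\me$ is real-valued.

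The main obstacle is precisely the mismatch of regularity between the two coefficients and the pointwise-versus-a.e.\ issue: $N_\me$ is only of class $L^\infty$, so one cannot evaluate $(\Im M_\me)u_\me'+(\Im N_\me)u_\me$ at a single point and naively prescribe $u_\me(x_0)$ and $u_\me'(x_0)$ independently there. The device above circumvents this by producing the two vanishing statements on whole subintervals rather than at isolated points — the locally constant test function isolates $\Im N_\me$ and the locally affine one isolates $\Im M_\me$ — after which the continuity of $M_\me$ (but not of $N_\me$, which is not needed) promotes the a.e.\ conclusion for $\Im M_\me$ to an everywhere one. Care must only be taken that the cut-off regions do not interfere, which is arranged by working on a small interval strictly inside the region where $u_\me$ is prescribed.
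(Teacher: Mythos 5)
Your proof is correct, and it takes a genuinely different route from the paper's. The paper uses a single one-parameter family of rescaled bumps $u_{n,\me}=\varphi(n(x-x_0))e_i$ and a limiting argument: the derivative term scales like $n$, so dividing by $n$ and letting $n\to\infty$ isolates $Q_\me M_\me$ first; once $Q_\me M_\me$ is known to be real, the leftover condition forces $Q_\me N_\me$ to be real. You instead factor out $Q_\me$ at the start (using its pointwise invertibility) to reduce to $(\Im M_\me)u_\me'+(\Im N_\me)u_\me=0$ a.e., and then decouple the two terms with two tailored families of test functions in the opposite order: locally constant ones kill the derivative term and give $\Im N_\me=0$ a.e.\ on whole subintervals, after which locally affine ones give $(\Im M_\me)c=0$ a.e., upgraded to everywhere by the Lipschitz continuity of $M_\me$. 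What your approach buys is the avoidance of any asymptotic argument and, more substantively, of the paper's slightly delicate step of ``evaluating at $x_0$'' an identity that, because $N_\me$ is merely $L^\infty$, only holds almost everywhere (the paper's step can be repaired via Lebesgue points or a continuity argument on $Q_\me M_\me$, but you sidestep the issue entirely by producing vanishing statements on intervals rather than at points). What the paper's approach buys is economy -- one family of test functions does both jobs -- and consistency with the concentration-and-rescaling device it reuses in the proof of \autoref{lem:serge-diagon}. Both arguments rely on the same structural facts: $\bigoplus_{\me\in\mE}\mathcal{D}(0,\ell_\me)^{k_\me}\subset D(\mathcal A)$ and the regularity mismatch between the Lipschitz $M_\me$ and the $L^\infty$ coefficient $N_\me$.
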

\begin{proof}
As all $Q_\me $ are real-valued, it suffices to show that~\eqref{eq:cond-real-ima} holds if and only if 
the matrices $Q_\me M_\me$ and $Q_\me N_\me$ are real for all $\me\in\mE$.
As this second property is clearly sufficient for~\eqref{eq:cond-real-ima} to hold,
it suffices to prove the converse implication.
For that purpose, fix a real-valued function $\varphi\in \mathcal{D}(\mathbb{R})$
with a support included into $[-1,1]$ and such that
$\varphi'(0)=1$. 
Now fix one edge $\me\in\mE$, one point $x_0\in (0,\ell_\me)$ and one $i\in\{1,\dots,  k_\me\}$. Then
 for all $n$ large enough, define $u_n$ as follows: 
 $u_{n,\me'}=0$, for all edges ${\me'}\ne \me$
 and
 $u_{n,\me}= \varphi(n(x-x_0)) e_i$,
 where $e_i=(\delta_{ij})_{j=1}^{k_\me}$ is the $i$-th vector of the canonical basis of $\R^{k_\me}$.
 The parameter $n$ is chosen large enough so that the support of $u_{n,\me}$ is included into 
 $(0,\ell_\me)$ so that 
  $u_n\in D(\mathcal A)$ (and is real valued). Taking this function in~\eqref{eq:cond-real-ima}, we find
 \[
n \varphi'(n(x-x_0))  (Q_\me M_\me)(x) e_i + \varphi(n(x-x_0)) (Q_\me N_\me)(x) e_i  \in \R^{k_\me}\quad \hbox{for all } x\in (0,\ell_\me).
\]
By evaluating this expression at $x_0$, dividing by $n$ and  and letting $n$ goes to infinitiy, we find that
 \[
 (Q_\me M_\me)(x_0) e_i  \in \R^{k_\me}.
\]
In other words, $Q_\me M_\me$ is real-valued. Once this property holds, ~\eqref{eq:cond-real-ima}  reduces to
\[
 (Q_\me N_\me)  u_\me\hbox{ is real-valued  for all real-valued }u\in D(\mathcal A) \hbox{ and all }\me\in\mE.
\]
This directly implies that  $Q_\me N_\me$ is real-valued 
since $\bigoplus_{\me\in\mE}(\mathcal{D}(0,\ell_\me))^{k_\me}$ is included into $D(\mathcal A)$.
\end{proof}

Before going on let us mention that  condition \eqref{eq:realcond-0}   can be simplified in the following way.
\begin{lemma}\label{conj:noreal}
The condition \eqref{eq:realcond-0} holds if and only if $Y_\mv$, for each $\mv\in\mV$, is spanned by vectors with real entries only.
\end{lemma}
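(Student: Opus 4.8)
The plan is to reduce the statement to an elementary fact about a single complex subspace and then prove that fact by a complexification argument. Since the real part acts coordinatewise and $\bigoplus_{\mv\in\mV}Y_\mv$ is a genuine direct sum, condition \eqref{eq:realcond-0} holds if and only if, for each individual $\mv\in\mV$, one has $\Re\xi\in Y_\mv$ for every $\xi\in Y_\mv$: testing with vectors supported on a single vertex (all other components set to $0$) shows that the per-vertex requirements are mutually independent and that together they are equivalent to \eqref{eq:realcond-0}. Thus it suffices to prove the following linear-algebra claim: a complex subspace $Y\subseteq\C^{n}$ satisfies $\Re\xi\in Y$ for all $\xi\in Y$ if and only if $Y$ admits a basis consisting of vectors with real entries.

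The easy direction is ($\Leftarrow$): if $Y=\lin\{v_1,\dots,v_m\}$ with each $v_j\in\R^{n}$, then for any $\xi=\sum_j c_jv_j$ with $c_j\in\C$ the reality of the $v_j$ gives $\Re\xi=\sum_j(\Re c_j)v_j\in Y$, as $\Re c_j\in\R$.

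For ($\Rightarrow$), I would first observe that closure under real parts forces closure under imaginary parts: since $Y$ is a complex subspace, $\imath\xi\in Y$, whence $\Im\xi=-\Re(\imath\xi)\in Y$. Setting $Y_\R:=Y\cap\R^{n}$, every $\xi\in Y$ then decomposes as $\xi=\Re\xi+\imath\,\Im\xi$ with both $\Re\xi,\Im\xi\in Y_\R$, so $Y$ coincides with the $\C$-span of the real subspace $Y_\R$, i.e.\ $Y$ is the complexification of $Y_\R$. Choosing a basis $v_1,\dots,v_m$ of $Y_\R$ (a real vector space) yields a $\C$-spanning set of $Y$ made of real vectors; the one point that needs a line of verification is that these stay linearly independent over $\C$, which follows by separating real and imaginary parts in a vanishing complex combination $\sum_j c_jv_j=0$ and invoking the $\R$-independence of the $v_j$.

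I do not expect a serious obstacle here, as the whole argument is routine. The only two places that deserve a moment of care are the reduction step—namely confirming that the direct-sum structure genuinely lets one handle each $Y_\mv$ in isolation—and the final $\C$-independence check of a real basis of $Y\cap\R^{n}$. I would present the single-subspace claim as the heart of the proof and then apply it vertexwise to conclude.
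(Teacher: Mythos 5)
Your proof is correct and follows essentially the same route as the paper's: the reverse direction is the identical computation $\Re\xi=\sum_j(\Re c_j)v_j$, and the forward direction rests on the same key observation that closure under real parts plus the complex-subspace structure gives $\Im\xi=-\Re(\imath\xi)\in Y$, so that every element splits as $\Re\xi+\imath\,\Im\xi$ with both parts real vectors in $Y$ (the paper applies this to a basis $y_1,\dots,y_n$ and uses $\{\Re y_i,\Im y_i\}$ as a real spanning set, while you package it as $Y$ being the complexification of $Y\cap\R^n$). Your explicit vertexwise reduction and the $\C$-independence check of the real basis are fine but inessential extras, since the lemma only asks for a real spanning set.
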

\begin{proof}
Let $Y_\mv$ be spanned by entry-wise real vectors $y_1,\ldots,y_n$ and let $y\in Y_\mv$. Then there exist $\alpha_1,\ldots,\alpha_n\in \C$ such that $y=\sum_{i=1}^n \alpha_i y_i$. Because all entries of each $y_i$ are real, it follows that $\Re y$ is again a linear combination of these basis vectors,
\[
\Re y=\sum_{i=1}^n (\Re \alpha_i)y_i \in Y_\mv.
\]
For the converse first note that \eqref{eq:realcond-0}  implies that for any $y\in Y_\mv$ also $\Re y, \Im y \in Y_\mv$. If now $y_1,\ldots,y_n$  is any basis of $Y_\mv$, then the entry-wise real vectors  $\Re y_i, \Im y_i$, $ i=1,\dots n,$ span $Y_\mv$.
\end{proof}

Let us now continue with the study of positivity of the semigroup.
Here, we will without loss of generality restrict ourselves to real Hilbert space $L^2(\mathcal G, \mathbb R)$.
We first notice that  each $Q_\me(x)^{\frac{1}{2}}$ is a lattice isomorphism  if and only if $Q_\me(x)^{\frac{1}{2}}$ and hence $Q_\me(x)$ are diagonal.
\begin{lemma}\label{lem:diagonal}
Let $P$ be a real $k\times k$ ($k\geq 1$) matrix that is symmetric and positive definite.
 Then $P$ is a lattice isomorphism, i.e., $P(\R_+^k)=\R_+^k$,
 if and only if $P$ is diagonal.
\end{lemma}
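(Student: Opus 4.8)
The plan is to prove the two implications separately, with all the genuine content residing in the ``only if'' direction; no appeal to the general theory of monomial (generalized permutation) matrices will be needed. For the ``if'' direction I would simply note that a diagonal symmetric positive definite matrix $P$ has strictly positive diagonal entries $d_1,\dots,d_k$, and that $P\xi=(d_1\xi_1,\dots,d_k\xi_k)^\top$ visibly maps $\R_+^k$ bijectively onto itself; hence $P(\R_+^k)=\R_+^k$.

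For the converse, assume $P(\R_+^k)=\R_+^k$ and write $P=(p_{ij})$. First I would extract \emph{entrywise} nonnegativity of both $P$ and $P^{-1}$. The inclusion $P(\R_+^k)\subseteq\R_+^k$, applied to the canonical basis vectors $e_j\in\R_+^k$, shows that each column $Pe_j$ is nonnegative, i.e.\ $p_{ij}\ge 0$ for all $i,j$. Since $P$ is positive definite it is invertible, and the reverse inclusion $\R_+^k\subseteq P(\R_+^k)$ says that for every $y\in\R_+^k$ its preimage $x=P^{-1}y$ lies in $\R_+^k$; thus $P^{-1}(\R_+^k)\subseteq\R_+^k$, and writing $P^{-1}=(q_{ij})$ the same basis-vector argument yields $q_{ij}\ge 0$ for all $i,j$.

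The key step is then a one-line computation from $PP^{-1}=I$. For $i\ne j$,
\[
0=(PP^{-1})_{ij}=\sum_{m}p_{im}q_{mj},
\]
and since every summand is nonnegative, each term vanishes; in particular the term $m=j$ gives $p_{ij}q_{jj}=0$. Because $P^{-1}$ is again symmetric positive definite, its diagonal entries are strictly positive, $q_{jj}=e_j^\top P^{-1}e_j>0$, and therefore $p_{ij}=0$. As $i\ne j$ was arbitrary, $P$ is diagonal, which completes the proof.

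The step I expect to require the most care is the translation of the \emph{surjectivity} half of $P(\R_+^k)=\R_+^k$ into the entrywise nonnegativity of $P^{-1}$: the inclusion $P(\R_+^k)\subseteq\R_+^k$ alone only controls $P$, and it is the equality that forces $P^{-1}\ge 0$. Once both $P$ and $P^{-1}$ are known to be nonnegative matrices, positive definiteness merely supplies the strictly positive diagonal entries $q_{jj}$ that make the product identity collapse to diagonality, so the remainder is routine.
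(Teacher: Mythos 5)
Your proposal is correct and follows essentially the same route as the paper's proof: extract entrywise nonnegativity of both $P$ and $P^{-1}$ from the hypothesis $P(\R_+^k)=\R_+^k$, then use $(PP^{-1})_{ij}=0$ for $i\ne j$ together with nonnegativity of all summands and strict positivity of the diagonal entries of $P^{-1}$ to conclude $p_{ij}=0$. The only difference is that you spell out more explicitly how surjectivity yields $P^{-1}\ge 0$ entrywise, a step the paper states without elaboration.
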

\begin{proof}
The diagonal character of $P$ added with its positive definiteness trivially imply that $P(\R_+^k)=\R_+^k$. Hence, we only need to prove the converse implication.
For that purpose denote by
$p_{ij}, 1\leq i,j\leq k$ (resp. $q_{ij}, 1\leq i,j\leq k$) the entries of $P$ (resp. $P^{-1}$).
Now notice that from our assumption directly follows $p_{ij}, q_{ij}\geq 0$, for all $i,j$. 
Moreover, for all $i,j$ with $i\ne j$, we have
\[
\sum_{\ell=1}^k p_{i\ell} q_{\ell j}=0,
\]
or, equivalently,
\[
p_{i\ell} q_{\ell j}=0 \quad \text{for all } \ell=1, \dots, k.
\]
Taking $\ell=j$, we find that
$p_{ij}=0$ since the diagonal entries of $P^{-1}$ are strictly positive.
This shows that $P$ is diagonal as requested.
\end{proof}
  
We continue by applying \autoref{lem:yok} to the convex subset of positive-valued functions $\bL^2(\mathcal G,\R_+)$.
In the following we will adopt the notation
\[
{\bf 1}_{\{u\ge 0\}}:=({\bf 1}_{\{u_{\me}\ge 0\}})_{\me\in\mE},\quad {\bf 1}_{\{u\le 0\}}:=({\bf 1}_{\{u_{\me}\le 0\}})_{\me\in\mE},
\]
for the vector-valued characteristic function of the nonnegative and nonpositive support of $u$, respectively. Here each ${\bf 1}_{\{u_{\me}\ge 0\}}$ is the diagonal $k_\me\times k_\me$ matrix associated to $u_{\me}=\left(u_\me^{(1)}, \cdots, u_\me^{(k_\me)}\right)^\top$,
\[
{\bf 1}_{\{u_{\me}\ge 0\}}:=
\diag\left({\bf 1}_{\{u_\me^{(1)}\ge 0\}}, \cdots, {\bf 1}_{\{u_\me^{(k_\me)}\ge 0\}}\right).
\]
Recall also,  that for any $f\in H^1$ one can write
\[
f^+ =  {\bf 1}_{\{f\ge 0\}} f,\quad  {-}f^- = {\bf 1}_{\{f\le 0\}} f,\quad\text{ and }\quad 
(f^+)'= {\bf 1}_{\{f\ge 0\}}  f'.
\]

\begin{prop}\label{prop:posit}
In addition to the assumptions of \autoref{prop:main2},  let $M_\me, N_\me, Q_\me $  be real-valued for all $\me\in\mE$, $Q_\me(x)$ be diagonal 
for $\me\in\mE$ and all $x\in [0,\ell_\me]$, and
\begin{equation}\label{eq:cond-positive-0}
\xi^+ \in  {\bigoplus_{\mv\in\mV} Y_\mv } \hbox{ for all } \xi\in  {\bigoplus_{\mv\in\mV} Y_\mv }.
\end{equation}
Then the semigroup generated  by $\mathcal A$  on $L^2(\mathcal G, \mathbb R)$ is positive if and only if
for all $\me\in\mE$ the matrices 
$M_\me (x)$ 
are diagonal for all $x\in [0,\ell_\me]$, and all off-diagonal entries of the matrices
$N_\me (x)$ 
are nonnegative for a.e.\ $x\in [0,\ell_\me]$.
\end{prop}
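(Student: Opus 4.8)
The plan is to apply \autoref{lem:yok} to the closed convex cone $K=\bL^2(\mathcal G,\R_+)$, whose invariance under $(e^{t\mathcal A})_{t\ge0}$ is precisely positivity. First I would identify the minimizing projector $P^Q_K$ with respect to the inner product~\eqref{eq:prod?}. Since each $Q_\me$ is real, diagonal and positive definite, so is $Q_\me^{\frac12}$, which by \autoref{lem:diagonal} is a lattice isomorphism; hence $Q_\me^{\frac12}(\R_+)=\R_+$ and \autoref{lem:project} applies with $C=\R_+$. Because a diagonal positive matrix commutes with taking the componentwise positive part, $(Q^{\frac12}u)^+=Q^{\frac12}u^+$, so $P^Q_K u=Q^{-\frac12}(Q^{\frac12}u)^+=u^+$, in analogy with~\eqref{eq:qrr}. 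Condition~\eqref{eq:cond-positive-0} together with $\gamma_\mv(u^+)=(\gamma_\mv(u))^+$ then guarantees $u^+\in D(\mathcal A)$ for every $u\in D(\mathcal A)$ (recall $D_{\max}=\bigoplus_\me H^1$ by \autoref{lem:cr}, and the positive part of an $H^1$ function is again $H^1$). Since positivity is unaffected by a scalar perturbation $\mathcal A\mapsto\mathcal A-\omega$ of the generator, and since $(u^+,u^-)=0$ (because $Q_\me$ is diagonal) removes the $\omega$-term, I may invoke the dissipative form of \autoref{lem:yok}: writing $u-P^Q_Ku=-u^-$, the semigroup is positive if and only if
\[
\left(\mathcal A u^+,\,u^-\right)\ge 0\qquad\text{for all }u\in D(\mathcal A).
\]

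Next I would expand this inner product \emph{without} integrating by parts. Writing $A_\me:=Q_\me M_\me$ (real symmetric) and $B_\me:=Q_\me N_\me$, and using $((u_\me^{(j)})^+)'=\mathbf 1_{\{u_\me^{(j)}\ge0\}}(u_\me^{(j)})'$ together with the disjointness of the supports of the positive and negative parts of each scalar component, every diagonal ($i=j$) contribution vanishes and
\[
\left(\mathcal A u^+,u^-\right)=\sum_{\me\in\mE}\sum_{i\ne j}\int_0^{\ell_\me}\Big(a^{(\me)}_{ij}\,((u_\me^{(j)})^+)'\,(u_\me^{(i)})^- + b^{(\me)}_{ij}\,(u_\me^{(j)})^+\,(u_\me^{(i)})^-\Big)\,dx.
\]
The sufficiency direction is now immediate: if every $M_\me$ is diagonal then $a^{(\me)}_{ij}=0$ for $i\ne j$, and if the off-diagonal entries of $N_\me$ are nonnegative then so are those of $B_\me=Q_\me N_\me$ (as $Q_\me$ is diagonal with positive diagonal), so each surviving integrand is a product of nonnegative quantities and the whole expression is $\ge0$.

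For the necessity direction I would localize. Fix an edge $\me$, indices $i\ne j$ and a point $x_0\in(0,\ell_\me)$, and choose the test function with only its $i$-th and $j$-th components nonzero, $u_\me^{(j)}=\phi$, $u_\me^{(i)}=-\psi$ with $\phi,\psi\ge0$ compactly supported in $(0,\ell_\me)$ (so that $u,u^+\in D(\mathcal A)$, their traces being $0\in Y_\mv$). Only the $(i,j)$ terms survive, and the criterion reduces to $\int_0^{\ell_\me}\big(a^{(\me)}_{ij}\phi'\psi+b^{(\me)}_{ij}\phi\psi\big)\,dx\ge0$. Taking $\phi(x)=\alpha((x-x_0)/\delta)$ and $\psi(x)=\beta((x-x_0)/\delta)$ for fixed nonnegative profiles $\alpha,\beta$, the first-order term scales like $\int a^{(\me)}_{ij}(x_0+\delta s)\alpha'(s)\beta(s)\,ds$ while the zeroth-order term carries an extra factor $\delta$; letting $\delta\to0$ and using continuity of $a^{(\me)}_{ij}$ leaves $a^{(\me)}_{ij}(x_0)\int\alpha'\beta\,ds\ge0$. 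Since $\int\alpha'\beta\,ds=-\int\alpha\beta'\,ds$ can be made of either sign with $\alpha,\beta\ge0$, this forces $a^{(\me)}_{ij}(x_0)=0$; as $x_0,i,j,\me$ are arbitrary, $Q_\me M_\me$, and hence $M_\me$, is diagonal. With the first sum gone, the same test functions give $\int_0^{\ell_\me} b^{(\me)}_{ij}\phi\psi\,dx\ge0$ for all nonnegative $\phi,\psi$, which (e.g.\ taking $\phi=\psi$ and applying the Lebesgue differentiation theorem) yields $b^{(\me)}_{ij}\ge0$ a.e., equivalently $(N_\me)_{ij}\ge0$ a.e.\ since $(Q_\me)_{ii}>0$.

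The main obstacle is this necessity direction, and specifically the separation of the first-order (transport) contribution from the zeroth-order one: one must show that an off-diagonal entry of $M_\me$ produces a genuinely sign-indefinite leading term in the dissipativity criterion. This is exactly the content of the scaling limit above, and it is the step where the freedom to choose the two \emph{independent} nonnegative profiles $\alpha,\beta$ is essential.
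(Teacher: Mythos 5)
Your proof is correct and follows essentially the same route as the paper: Yokota's invariance criterion with $P^Q_K u = u^+$, sufficiency by sign inspection of the off-diagonal terms, and necessity via compactly supported two-component test functions whose scaling limit isolates the first-order coefficient $(Q_\me M_\me)_{ij}$ --- your $\delta\to 0$ argument with two independently positioned profiles $\alpha,\beta$ is exactly the paper's $n\to\infty$ argument with the hat functions $\varphi,\psi_\pm$, there packaged as a separate lemma. The only cosmetic difference is that you conclude $(Q_\me N_\me)_{ij}\ge 0$ by taking $\phi=\psi$ and Lebesgue differentiation, where the paper invokes its appendix version of the fundamental lemma of the calculus of variations.
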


\begin{proof} 
By \autoref{lem:diagonal} we may apply \autoref{lem:yok} to the closed convex subset $K=\bL^2(\mathcal G,\R_+)$. Therefore,  by diagonality and positivity of $Q_\me(x)$, the minimizing projector $P_K $ given by \eqref{eq:PQK}  takes the simpler form
\begin{equation}\label{eq:defPK}
P^Q_K u =Q^{-\frac{1}{2}}  \left( Q^{\frac{1}{2}} u \right)^+  = u^+.
\end{equation}
Since real scalar perturbations of the generator do not affect positivity of the semigroup, we may assume that $\omega = 0$. As \eqref{eq:cond-positive-0}  yields $P_K u\in D(\mathcal A)$ for all $u\in D(\mathcal A)$, the semigroup is thus positive if and only if
 \[( {\mathcal A}u^+,(u-u^+)) \le 0\qquad \hbox{for all }u\in D(\mathcal A).
\]
By applying \eqref{identityA} and~\eqref{eq:defPK} we obtain the equivalent condition 
\begin{equation}\label{eq:cond-positive}
\sum_{\me\in\mE}\int_0^{\ell_\me}
Q_\me \left(  M_\me  (u_\me^+)' +  N_\me u_\me^+ \right)\cdot 
( u_\me- u_\me^+) \,dx \le 0\quad\hbox{for all }u\in D(\mathcal A),
\end{equation}
which we rewrite using the characteristic functions as 
\begin{equation}\label{eq:cond-positive-main}
\sum_{\me\in\mE}\int_0^{\ell_\me}
\left( Q_\me  M_\me  {\bf 1}_{\{u_\me\ge 0\}}  u'_\me + Q_\me  N_\me  {\bf 1}_{\{u_\me\ge 0\}}  u_\me \right)\cdot 
(  {\bf 1}_{\{u_\me\le 0\}} u_\me) \,dx   \le  0\quad\hbox{for all }u\in D(\mathcal A).
\end{equation}
The conclusion then follows from \autoref{lem:serge-diagon} below that furnishes an equivalent, but easier to check, formulation of \eqref{eq:cond-positive-main}.
\end{proof}

\begin{rem} In the same way as in \autoref{conj:noreal}, by replacing the real and imaginary by the positive and negative part, respectively,  we can see that \eqref{eq:cond-positive-0} holds if and only if, for each $\mv\in\mV$, $Y_\mv$ is spanned by vectors with positive entries only.
\end{rem}

Let us prove the last step that is still missing in the proof of \autoref{prop:posit}.

\begin{lemma}\label{lem:serge-diagon}
Let   $Q_\me(x)$ be diagonal 
for all $\me\in\mE$ and  for all $x\in [0,\ell_\me]$.
Then \eqref{eq:cond-positive-main} holds if and only if 
for all $\me\in\mE $ the matrices 
$M_\me (x)$ 
are diagonal for all $x\in [0,\ell_\me]$, and all off-diagonal entries of the matrices
$N_\me (x)$ 
are nonnegative for a.e.\ $x\in [0,\ell_\me]$.
\end{lemma}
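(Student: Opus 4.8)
The plan is to first rewrite the bilinear condition \eqref{eq:cond-positive-main} in coordinates, and then establish the two implications separately: sufficiency is a direct sign count, while necessity rests on a localization-and-rescaling argument in the spirit of the proof of \autoref{lem:cond4.4}. Writing $u_\me=(u_\me^{(1)},\dots,u_\me^{(k_\me)})^\top$ and using the identities ${\bf 1}_{\{u_\me^{(j)}\ge 0\}}(u_\me^{(j)})'=((u_\me^{(j)})^+)'$ and ${\bf 1}_{\{u_\me^{(i)}\le 0\}}u_\me^{(i)}=-(u_\me^{(i)})^-$, I would expand the scalar products entrywise; since $(u^{(i)})^+(u^{(i)})^-=0$ and $((u^{(i)})^+)'(u^{(i)})^-=0$ pointwise, the diagonal terms $i=j$ drop out and \eqref{eq:cond-positive-main} becomes equivalent to
\[
\sum_{\me\in\mE}\int_0^{\ell_\me}\sum_{i\ne j}\Big[(Q_\me M_\me)_{ij}\,((u_\me^{(j)})^+)'+(Q_\me N_\me)_{ij}\,(u_\me^{(j)})^+\Big](u_\me^{(i)})^-\,dx\ge 0
\]
for all $u\in D(\mathcal A)$. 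As $Q_\me=\diag(q_\me^{(1)},\dots,q_\me^{(k_\me)})$ with $q_\me^{(i)}>0$, one has $(Q_\me M_\me)_{ij}=q_\me^{(i)}(M_\me)_{ij}$ and $(Q_\me N_\me)_{ij}=q_\me^{(i)}(N_\me)_{ij}$, so "$M_\me$ diagonal" is the same as "$Q_\me M_\me$ diagonal", and the sign of the off-diagonal entries of $N_\me$ matches that of $Q_\me N_\me$.

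For sufficiency I would assume $M_\me$ diagonal and the off-diagonal entries of $N_\me$ nonnegative. Then $(Q_\me M_\me)_{ij}=0$ for $i\ne j$, so only the $N$-terms survive, and each summand $q_\me^{(i)}(N_\me)_{ij}(u_\me^{(j)})^+(u_\me^{(i)})^-$ is a product of nonnegative factors; hence the integrand is nonnegative and the displayed inequality holds.

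For necessity I would fix an edge $\me$, an interior point $x_0\in(0,\ell_\me)$ and indices $i_0\ne j_0$, and test with functions supported near $x_0$ whose only nonzero components are $u_\me^{(j_0)}=a\ge 0$ and $u_\me^{(i_0)}=-b$ with $b\ge 0$; these lie in $D(\mathcal A)$ because $\bigoplus_{\me}\mathcal D(0,\ell_\me)^{k_\me}\subset D(\mathcal A)$. Only the $(i,j)=(i_0,j_0)$ term survives, reducing the inequality to $\int_0^{\ell_\me}\big[(Q_\me M_\me)_{i_0 j_0}\,a'+(Q_\me N_\me)_{i_0 j_0}\,a\big]b\,dx\ge 0$. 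Taking $a(x)=\alpha(n(x-x_0))$ and $b(x)=\beta(n(x-x_0))$ for fixed nonnegative profiles $\alpha,\beta\in\mathcal D(\R)$ and letting $n\to\infty$, the $N$-contribution is $O(1/n)$ and vanishes (only $N_\me\in L^\infty$ is used here), while the $M$-contribution tends to $(Q_\me M_\me)_{i_0 j_0}(x_0)\int_\R\alpha'\beta\,dy$ by Lipschitz continuity of $Q_\me M_\me$. Choosing $\alpha,\beta$ with staggered supports so that $\int_\R\alpha'\beta$ is first positive and then negative forces $(Q_\me M_\me)_{i_0 j_0}(x_0)=0$; since $x_0$ and $i_0\ne j_0$ are arbitrary and $Q_\me M_\me$ is continuous, $M_\me$ is diagonal on all of $[0,\ell_\me]$. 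The inequality then collapses to $\int_0^{\ell_\me}(Q_\me N_\me)_{i_0 j_0}\,ab\,dx\ge 0$ for all nonnegative $a,b\in\mathcal D(0,\ell_\me)$; taking $a=b$ sharply peaked at a Lebesgue point of $(N_\me)_{i_0 j_0}$ yields $(Q_\me N_\me)_{i_0 j_0}\ge 0$, hence $(N_\me)_{i_0 j_0}\ge 0$, a.e.

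I expect the main obstacle to be the necessity part, and specifically the \emph{separation} of the two pointwise conditions. Because the first-order term in $M_\me$ and the zeroth-order term in $N_\me$ enter the integrand together, one must first annihilate the $M$-contribution in order to read off diagonality of $M_\me$, and only afterwards isolate the sign of $N_\me$. The rescaling makes the derivative term dominate so that $M_\me$ can be handled on its own; the delicate point, in contrast to the reality computation of \autoref{lem:cond4.4}, is that here the test components are sign-constrained ($a,b\ge 0$), so the freedom to make $\int_\R\alpha'\beta$ of either sign must be obtained by shifting the supports of $\alpha$ and $\beta$ relative to one another rather than by flipping their signs.
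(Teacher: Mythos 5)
Your proposal is correct and follows essentially the same route as the paper's proof: sufficiency by entrywise sign inspection, and necessity via test functions supported near an interior point with a rescaling that makes the derivative term dominate (the paper uses explicit staggered tent functions $\varphi,\psi_\pm$ where you use generic staggered bump profiles), followed by a separate fundamental-lemma-type argument for the off-diagonal entries of $N_\me$ (the paper invokes its appendix \autoref{l:appendixB1} where you use Lebesgue points). The only cosmetic improvements on your side are the explicit observation that the diagonal terms drop out and the cleaner bookkeeping of the $O(1/n)$ scaling.
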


\begin{proof}
First observe that since $Q_\me(x) $ are all diagonal matrices with strictly positive diagonal elements, $M_\me (x)$ is diagonal if and only if  $Q_\me (x)M_\me (x)$ is diagonal and $N_\me (x)$  has  {nonnegative} off-diagonal entries if and only if  the same holds for $Q_\me(x)N_\me (x)$.
Obviously, these properties imply that
\[
\begin{split}
\sum_{\me\in\mE}\int_0^{\ell_\me}
 \left( Q_\me M_\me  {\bf 1}_{\{u_\me\ge 0\}}  u'_\me  \right)\cdot 
(  {\bf 1}_{\{u_\me\le 0\}} u_\me) \,dx =& 0\quad \hbox{and}\\
\sum_{\me\in\mE}\int_0^{\ell_\me}
 \left( Q_\me N_\me  {\bf 1}_{\{u_\me\ge 0\}}  u_\me \right)\cdot 
(  {\bf 1}_{\{u_\me\le 0\}} u_\me) \,dx   \le  & 0.
\end{split}
\]
yielding  \eqref{eq:cond-positive-main}.
 
 Conversely, assume that  \eqref{eq:cond-positive-main} holds.
Introduce function
 $\varphi\colon \R\to\R_+$,
 \[
 \varphi(y) :=
 \begin{cases}
 y+1 & \hbox{ if } y\in [-1,0],\\
 1- y & \hbox{ if } y\in [0,1],\\
 0& \hbox{ else, }
 \end{cases}
 \]
 and define 
  \[
  \psi_-:\R\to \R_+, y\mapsto \varphi(2y+1), \quad \psi_+:\R\to \R_+, y\mapsto \varphi(2y-1).
  \]
  Notice that 
  \begin{equation}\label{eq:serge/8/10:1}
  \int_{-1}^1 \varphi'(y) \psi_-(y)\,dy>0, \quad  \int_{-1}^1 \varphi'(y) \psi_+(y)\,dy<0.
  \end{equation}
 Now fix one edge $\me\in\mE$, one point $x_0\in (0,\ell_\me)$ and $1\leq i,j\leq k_\me$, $i \ne j$. Then
 for all $n$ large enough, define $v_{n,\pm}$ as follows: 
 $v_{n,\pm,\me'}=0$, for all edges ${\me'}\ne \me$
 and
 $v_{n,\pm, \me}= (v_n^{(1)}, \cdots, v_n^{(k_\me)})^\top$
with only two nonzero entries: $v^{(i)}, v^{(j)}$ defined by
\begin{eqnarray*}
v_n^{(i)}(x)=\varphi(n(x-x_0)) \hbox{ and } v_n^{(j)}(x)=-\psi_\pm(n(x-x_0)).
\end{eqnarray*}
 The parameter $n$ is chosen large enough so that the support of $v_n^{(i)}, v_n^{(j)}$ is included into 
 $(0,\ell_\me)$ and
$u_{n,\pm} \in D(\mathcal A)$, as it vanishes in a neighborhood of each vertex. Plugging this test-function into \eqref{eq:cond-positive-main}, dividing the expression by $n$ and letting $n$ goes to infinitiy, we find that
 \[
 \left( Q_\me (x_0) M_\me (x_0) \right)_{ji} \int_{-1}^1 \varphi'(y) \psi_\pm(y)\,dy\geq 0.
 \]
 With the help of \eqref{eq:serge/8/10:1}, we deduce that $Q_\me(x_0) M_\me(x_0)$
 is diagonal.
 
Taking this into account,   \eqref{eq:cond-positive-main} reduces to
 \begin{equation}\label{eq:cond-positive-main-reduced}
\sum_{\me\in\mE}\int_0^{\ell_\me}
 \left( Q_\me N_\me  {\bf 1}_{\{u_\me\ge 0\}}  u_\me \right)\cdot 
(  {\bf 1}_{\{u_\me\le 0\}} u_\me) \,dx    \le  0.
\end{equation}
Now fix one edge $\me\in\mE$, and
$1\leq i<j\leq k_\me$. Define $v$ as follows: 
 $v_{\me'}=0$, for all edges ${\me'}\ne \me$
 and
 $v_{\me}= (v^{(1)}, \cdots, v^{(k_\me)})^\top\in  \mathcal{D}(0,\ell_\me)^{k_\me}$
with only two nonzero entries: $v_n^{(i)}, v_n^{(j)}$ defined by
\begin{eqnarray*}
v^{(i)}(x)= - |\chi(x)| \hbox{ and } v^{(j)}(x)=|\chi(x)|,
\end{eqnarray*}
with $\chi\in  \mathcal{D}(0,\ell_\me)$.
As before this function belongs to $D(\mathcal A)$, and with this choice of $v$ in 
\eqref{eq:cond-positive-main-reduced} we find that
\[
\int_0^{\ell_\me}
(Q_\me N_\me)_{ij} \chi^2 \,dx    \ge   0.
 \]
 Since this holds for all $\chi\in  \mathcal{D}(0,\ell_\me)$, by {\autoref{l:appendixB1}}, we conclude
 that $(Q_\me (x)N_\me (x))_{ij}   \ge   0$ for all $i\ne j$ and all $x \in [0,\ell_\me]$.
\end{proof}

\begin{rem}
As a simple corollary, if $Q_\me(x)$, $M_\me(x)$, and $N_\me(x)$ are all diagonal, \eqref{eq:cond-positive-main} is automatically satisfied and we are left with~\eqref{eq:cond-positive-0}, a condition depending on the boundary conditions only, a result reminiscent of~\cite[Prop.~5.1]{CarMug09} in the case of \textit{parabolic} systems. 
\end{rem}

 \begin{exa}\label{ex:Dirichlet}
If Dirichlet conditions are imposed on all endpoints $\ell_\me$ (resp., on all endpoints 0) and $Q_\me(0) M_\me(0)$ is positive semidefinite (resp. $Q_\me(\ell) M_\me(\ell)$ is negative semidefinite) for all $\me$, then the corresponding nonpositive isotropic subspaces $Y_\mv$  are  isomorphic to a direct product of $k_\mv$ blocks of size ${k_\me}$ that are either zero or $\C^{k_\me}$. The same holds for subspaces $Y_\mv^\perp$ but with the opposite pattern. Since each edge $\me$ has exactly one initial endpoint 0 and one terminal endpoint $\ell_\me$, we get exactly one corresponding nonzero block ($\equiv \C^{k_\me}$)  in all the spaces $ \widetilde {Y_{\mv}^\perp}$, $\mv\in \mV$. Hence,
\[ \dim \sum_{\mv\in \mV} \widetilde {Y_{\mv}^\perp}= \sum_{\mv\in \mV}  \dim Y_\mv = \sum_{\me\in\mE} k_\me = k.\] 
Since also $\gamma_\mv(u) \in Y_\mv$ for all $ \mv\in \mV$, by \autoref{prop:main2}, $\mathcal A$ generates a strongly continuous quasi-contractive semigroup.
 \end{exa}

\begin{exa}\label{ex:posit}
The boundary conditions considered in \autoref{ex:Dirichlet} do \textit{de facto} turn the network into a collection of decoupled intervals. 
Instead, we now use the setting described  in \autoref{rk:globalbc} 
and  impose conservation of mass conditions, which result in a global boundary condition $u(\ell)=Wu(0)$ for some column stochastic block  diagonal matrix $W$ with $|\mE |$ blocks of sizes $k_\me\times k_\me$. We have
\[
\begin{split}
Y&=\left\{\left(\alpha_{\me_1},\alpha_{\me_2},\ldots,\alpha_{\me_{|\mE|}},\sum_{\me\in\mE}w_{1\me}\alpha_{\me},\ldots,\sum_{\me\in\mE}w_{|\mE|\me}\alpha_{\me}\right)^\top: \alpha_{\me_i}\in \C^{k_\me}, \ \me_i\in\mE \right\}\\
&=\left\{\left(\alpha,W\alpha\right)^\top: \alpha\in \C^k\right\}  = \ker \begin{pmatrix}-W & I\\ \end{pmatrix},
\end{split}
\] 
hence condition \eqref{eq:basis_global} {is not satisfied: indeed, 1 is an eigenvalue of $W$ and therefore its associated eigenvector $\alpha_0$ yields a non trivial element
$\left(\alpha_0,\alpha_0\right)^\top$ in $Y\cap K$, hence \eqref{eq:basis_global} cannot hold due to \autoref{lem:equivSerge}. But if $M$ is diagonal or spatially constant and positive definite, then we can use condition \eqref{eq:basis_global_infty} that in the case of our choice of $Y$ reduces to the invertibility of the matrix $W$, {which agrees with \cite[Prop. 2.1]{KraPuc20}.}} 
Then, {under these additional assumptions,} $\mathcal A$  generates a contractive semigroup provided
\[
\left(-Q(0)M(0) +  W^\top Q(\ell)M(\ell)W\right)\alpha\cdot \overline{\alpha}\le 0\qquad \hbox{for all }\alpha\in \C^k.
\]
{where $Q(0):=\diag \left(
Q_\me(0) \right)_{\me\in\mE}$ (resp. $Q(\ell):=\diag \left(
Q_\me(\ell_\me) \right)_{\me\in\mE}$) and $M(0):=\diag \left(
M_\me(0)\right)_{\me\in\mE}$ (resp. $M(\ell):=\diag \left(
M_\me(\ell_\me) \right)_{\me\in\mE}$).}

If the matrices $Q_\me,M_\me$ are diagonal and real-valued, and if moreover  the diagonal and off-diagonal entries of $N_\me(x)$ are for all $\me\in\mE$ and a.e.\ $x\in [0,\ell_\me]$ real and nonpositive, respectively, then  the semigroup generated by $\mathcal A$ is positive. The case of spatially constant,  and diagonal matrices {with positive entries} on $L^1(\mathcal G)$ corresponds to flows in networks as studied in \cite{KraSik05}, see also  \cite{KraPuc20} and the references there. There, $W$ is the generalized adjacency matrix of the line graph.
\end{exa}

{
We conclude this section by elaborating on a comparison principle between semigroups. Let $A_1,A_2$ be two operators, each generating a positive semigroup -- say, $(T_1(t))_{t\ge 0},(T_2(t))_{t\ge 0}$. Then $(T_2(t))_{t\ge 0}$ is said to \emph{dominate} $(T_1(t))_{t\ge 0}$ if 
\[
T_1(t)f\le T_2(t)f\qquad \hbox{for all }f\ge 0 \hbox{ and all }t\ge 0.
\]

We can now formulate the following.
\begin{prop}
Let two operators $A_1,A_2$ with domain
\[
\begin{split}
D(A_1)&:=\{u\in D_{\max}:\gamma(u)\in Y_1\},\\
D(A_2)&:=\{u\in D_{\max}:\gamma(u)\in Y_2\}
\end{split}
\]
satisfy our standing assumptions as well as the assumptions of \autoref{prop:main2}. Let the semigroups generated by $A_1,A_2$,  say, $(T_1(t))_{t\ge 0},(T_2(t))_{t\ge 0}$, be both real and positive. Then $(T_2(t))_{t\ge 0}$ dominates $(T_1(t))_{t\ge 0}$ if 
$Y_1$ is an \textit{ideal} of $Y_2$, i.e., $Y_1\subset Y_2$ and
\[
0\le \xi \le \chi \hbox{ with }\xi\in Y_2\hbox{ and }\chi \in Y_1 \hbox{ implies }\xi\in Y_1.
\]
and additionally 
\[
( A_1 u,v) \le ( A_2 u,v)\quad \hbox{for all }u,v\in D(A_1)\hbox{ s.t. }u,v\ge 0.
\]
\end{prop}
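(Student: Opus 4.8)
The plan is to recast \emph{domination} as an \emph{invariance} property on a product space and then invoke the invariance criterion of \autoref{lem:yok}. Work in the real Hilbert space $H:=\bL^2(\mathcal G;\R)\oplus\bL^2(\mathcal G;\R)$, equipped with the direct sum of the two $Q$-weighted inner products, and consider the product operator $\mathbf A:=A_1\oplus A_2$, which is $\omega$-quasi-$m$-dissipative (with $\omega$ the larger of the two individual constants) and generates $\mathbf T(t):=T_1(t)\oplus T_2(t)$. Since domination is unchanged if both generators are shifted by the same scalar $-\omega$, I may assume $\omega=0$, i.e.\ that $\mathbf A$ is dissipative. The first step is the elementary observation that, because $T_1$ and $T_2$ are both positive, $(T_2(t))$ dominates $(T_1(t))$ if and only if the closed convex cone
\[
\mathcal C:=\{(f,g)\in H:\ 0\le f\le g\}
\]
is invariant under $(\mathbf T(t))$: one direction follows by taking $f=g$, the other by combining positivity of $T_1$ with the domination inequality.

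The second step is to identify the minimizing projector $P_{\mathcal C}$ onto $\mathcal C$. Since each $Q_\me$ is diagonal and positive definite, the argument of \autoref{lem:project} shows that $P_{\mathcal C}$ decouples over edges and over inner coordinates and acts, in each scalar slot $(a,b)=(f_\me^{(r)}(x),g_\me^{(r)}(x))$, as the (possibly weighted) Euclidean projection onto the planar cone $\{0\le a\le b\}$. This projection is an explicit, globally Lipschitz, piecewise-linear map; writing its two outputs as $\big(\min(a,\tfrac{a+b}{2})\big)^+$ and $\max\big(b,\tfrac{a+b}{2},0\big)$ (with the obvious weighted analogues), one sees via the chain rule for $H^1$-functions that $P_{\mathcal C}$ maps $D_{\max}\oplus D_{\max}$ into itself.

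The third step — the first genuinely delicate point — is to check that $P_{\mathcal C}$ preserves $D(\mathbf A)$, i.e.\ that $P_{\mathcal C}u\in D(A_1)\oplus D(A_2)$ for all $u\in D(\mathbf A)$. Since the projected functions are continuous up to the vertices, their traces are exactly the coordinatewise cone-projections of $(\gamma(u_1),\gamma(u_2))$, so I must show that whenever $\gamma(u_1)\in Y_1$ and $\gamma(u_2)\in Y_2$ the trace of the first component lies in $Y_1$ and that of the second in $Y_2$. This is where the \emph{ideal} hypothesis enters: using that $Y_1\subset Y_2$, that each $Y_i$ is spanned by entrywise nonnegative vectors (a consequence of positivity, cf.\ the remark following \autoref{prop:posit}), and the ideal property $0\le\xi\le\chi,\ \xi\in Y_2,\ \chi\in Y_1\Rightarrow\xi\in Y_1$, one deduces the required memberships from the pointwise ordering $0\le \text{(first trace)}\le\text{(second trace)}$ produced by the cone projection.

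The last step applies the dissipative form of \autoref{lem:yok}: with $(f,g):=P_{\mathcal C}u\in D(\mathbf A)$ it remains to verify
\[
(A_1 f,\,u_1-f)+(A_2 g,\,u_2-g)\le 0\qquad\text{for all }u\in D(\mathbf A),
\]
the inner products being those of the product space. I would split the space-coordinate domain into the four regions on which the cone projection is linear. On the region where the two outputs coincide ($f=g=:h\ge0$, occurring where $u_1\ge u_2$) the integrand reduces to a multiple of $\big((A_1-A_2)h,\,u_1-u_2\big)$ with $h\ge0$ and $u_1-u_2\ge0$, which is $\le0$ precisely by the additional hypothesis $(A_1w,v)\le(A_2w,v)$ for $w,v\ge0$. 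On the remaining regions the contribution is controlled by positivity of each semigroup separately: invoking \autoref{prop:posit} and \autoref{lem:serge-diagon}, positivity forces every $M_\me$ to be diagonal and the off-diagonal entries of every $N_\me$ to be nonnegative, and with the chain-rule expressions for $f',g'$ these sign conditions render the corresponding bulk and boundary terms nonpositive, exactly as in the proof of \autoref{prop:posit}. The main obstacle is this last step: the cone projection couples the two components, so its derivative must be handled carefully across the region interfaces, and one must see that the cross-terms produced by the off-diagonal structure of $N_\me$ together with the derivative terms combine with the ordering hypothesis to yield a global sign — this bookkeeping, rather than any single inequality, is the technical heart of the proof.
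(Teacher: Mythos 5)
Your overall strategy --- recasting domination of $(T_1(t))$ by $(T_2(t))$ as invariance of the closed convex cone $\mathcal C=\{(f,g):0\le f\le g\}$ under the product semigroup $T_1\oplus T_2$ on $\bL^2(\mathcal G;\R)\oplus\bL^2(\mathcal G;\R)$, followed by an application of \autoref{lem:yok} --- is exactly the strategy behind the paper's proof: the paper performs the same rescaling reduction to the dissipative case and then cites \autoref{lem:yok} together with the domination theorem \cite[Thm.~2.24]{Ouh05}, whose proof is precisely this product-space invariance argument. So you are on the paper's route; the problem is that the two steps you yourself flag as delicate are genuine gaps in your write-up, not mere bookkeeping.

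First, the domain-preservation step (your third step) is unjustified, and the justification you offer is incorrect. Positivity of the semigroup does \emph{not} yield condition \eqref{eq:cond-positive-0}: in \autoref{prop:posit} that condition is a \emph{hypothesis}, not a conclusion, and the remark you cite relates \eqref{eq:cond-positive-0} to the span condition, not to positivity of the semigroup. Worse, even granting that both $Y_i$ are spanned by entrywise nonnegative vectors, that $Y_1\subset Y_2$, and that $Y_1$ is an ideal of $Y_2$, the projected traces need not lie in $Y_1\times Y_2$. Concretely, take $Y_2=\lin\{(1,1,0)^\top,(0,1,1)^\top\}$ and $Y_1=\lin\{(1,1,0)^\top\}$; one checks directly that $Y_1$ is an ideal of $Y_2$ and both are spanned by nonnegative vectors. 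For boundary values $\xi_1=(2,2,0)^\top\in Y_1$ and $\xi_2=(0,-1,-1)^\top\in Y_2$ (attainable by functions in $D(A_1)\times D(A_2)$, since the trace maps are surjective), your slotwise projection formulas give first and second projected traces both equal to $(1,\tfrac12,0)^\top$, which lies in neither $Y_1$ nor $Y_2$. What your argument actually needs is that $Y_1$ and $Y_2$ are closed under taking positive parts (hence under all lattice operations); then $\xi_1\wedge\tfrac{\xi_1+\xi_2}{2}$ and its positive part stay in $Y_2$, the estimate $0\le(\xi_1\wedge\tfrac{\xi_1+\xi_2}{2})^+\le\xi_1^+\in Y_1$ holds, and the ideal property pushes the first trace into $Y_1$. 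But this closure property is an \emph{additional} assumption; it is neither stated in the proposition nor a consequence of positivity as you claim. (Alternatively one can avoid domain preservation entirely by using the general quasi-dissipative criterion \eqref{eq:yok}, which does not require $P_{\mathcal C}u\in D(\mathbf A)$ --- but then your last step must be redone with $\mathbf A u$ in place of $\mathbf A P_{\mathcal C}u$.)

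Second, in your final step you apply the hypothesis $(A_1w,v)\le(A_2w,v)$ --- assumed only for $w,v\in D(A_1)$ with $w,v\ge0$ --- to region-localized pieces such as $h\,{\bf 1}_{\{u_1\ge u_2\}}$ and $(u_1-u_2){\bf 1}_{\{u_1\ge u_2\}}$. These truncated functions are in general not elements of $D(A_1)$ (they satisfy neither the boundary conditions nor, after multiplication by indicators, the required regularity), so the global operator inequality cannot be invoked integrand-by-integrand on each region where the projection is linear. To make this step work one must produce globally defined nonnegative elements of $D(A_1)$ (suitable lattice combinations of $u_1,u_2,f,g$) to which the hypothesis applies --- which again requires precisely the sublattice/ideal structure missing in the previous step. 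Since you explicitly defer this point as unresolved "bookkeeping", both pillars of the proof remain unproven as written.
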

\begin{proof}
First of all, observe that domination is not affected if $A_1,A_2$ are rescaled by the same real scalar $\omega$; hence, we can without loss of generality assume both $A_1,A_2$ to be dissipative. The proof can then be performed combining Lemma~\ref{lem:yok} and~\cite[Thm.~2.24]{Ouh05}.
\end{proof}
An interesting case arises when $A_1,A_2$ are defined by means of the same matrices $M,Q,N$, and only the boundary conditions are different, i.e., $Y_1\ne Y_2$. This is e.g.\ the case if $Y_1$ is the space spanned by $\Pi v_1,\ldots, \Pi v_m$, where $v_1,\ldots,v_m$ are the vectors spanning $Y_2$ and the projector $\Pi$ is a block diagonal operator matrix whose diagonal blocks are a zero matrix (of any size $\ge 1$) and an identity matrix  (of any size $\le 2|\mE|$). In the case of the transport equation on a network studied in~\cite{KraSik05}, this corresponds to comparing a given network with a new network with additional Dirichlet conditions in some vertices.

We may discuss in a similar way the issue of $L^\infty$-contractivity, i.e., the invariance of ${\bf L}^2(\mathcal G;C)$ for $C:=[-1,1]$ under the semigroup generated by $\mathcal A$. We omit the details.

\section{Examples}\label{sec:examples}

\subsection{Linearized Saint-Venant models}

Here we study a system where
the  linearized Saint-Venant  model   \eqref{systSaint-Venant} is considered on  all the $J$ edges of a network, and hence the unknown $(h,u)^\top$ is replaced by $(h_\me,u_\me)^\top_{\me\in\mE}$.
 While our approach applies to arbitrary networks
 and  variable  functions $H, V$ that  may differ across the edges, for the sake of simplicity  we here restrict to the case of constant (real) coefficients $H, V$
 that are independent of the edges, to $H>0$ (which is physically reasonable), and  to a star-shaped network { with $J$ edges, for some integer $J\geq 2$, see \autoref{fig:star}. More precisely, we let $\mE:=\{\me_1,\dots, \me_J\}$, and identify each edge $\me_j$ with $(0,\ell_j)$ (the parametrization of $\me_j$ is determined by the arrow in \autoref{fig:star}):} $\mv_0$ will correspond to the endpoint 
$\ell_1$ for $\me_1$ and to the   initial point $0$ for all other edges.
The external vertex of $\me_j$ will be denoted by $\mv_j$, $j=1,\dots,J$.

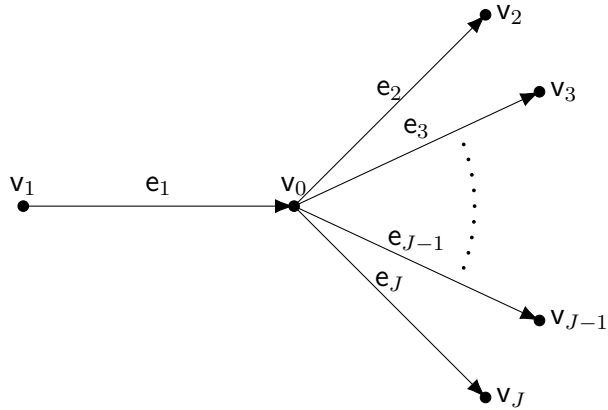
\begin{figure}[H]
\begin{center}
\begin{tikzpicture}
\foreach \x in {25,45,-25,-45}{
\draw[-{Latex[scale=1.5]}]  (\x:0cm) -- (\x:3.6cm);
\draw[fill] (\x:3.6cm) circle (2pt);
}
\draw[-{Latex[scale=1.5]}]  (180:3.6cm) -- (180:0cm);
\draw[fill] (180:3.6cm) circle (2pt);
\draw[fill] (0:0cm) circle (2pt) node[above]{$\mv_0$};
\node at (180:3.6) [anchor=south] {$\mv_1$};
\node at (180:1.8) [anchor=south] {$\me_1$};
\node at (45:3.6) [anchor=west] {$\mv_2$};
\node at (45:1.8) [anchor=south] {$\me_2$};
\node at (25:3.6) [anchor=west] {$\mv_3$};
\node at (25:1.8) [anchor=south] {$\me_3$};
\node at (-25:3.6) [anchor=west] {$\mv_{J-1}$};
\node at (-25:1.8) [anchor=south] {$\me_{J-1}$};
\node at (-45:3.6) [anchor=west] {$\mv_J$};
\node at (-45:1.8) [anchor=south] {$\me_J$};
\foreach \y in {20,15,10,5,0,-5,-10,-15,-20}{
\draw[fill] (\y:2.4cm) circle (.5pt);
}
\end{tikzpicture}
\caption{A star-shaped network with one incoming and $J-1$ outgoing edges.}\label{fig:star}
\end{center}
\end{figure}
Now we need to fix the boundary conditions at all vertices. According to our approach they are related to the operators $T_\mv$ defined in \eqref{eq:defTv}. In our case we have
(see \eqref{aq:defMQ-Saint-Venant})
\[
Q_\me  M_\me =
-
\left(\begin{array}{ll}
gV  & gH  \\ gH & HV 
\end{array}
\right)
\equiv: - B.
\]
Because the matrix $B$ is independent of $\me$,  and also  symmetric and invertible due to  condition \eqref{sn:15/4:1}, we thus have 
\begin{equation*}\label{eq:star-ext}
T_{\mv_1}=  B, \hbox{ and } T_{\mv_i}= - B \text{ for all } i\geq 2
\end{equation*}
at the external vertices $\mv_i$, $i\geq 1$, 
while at the interior vertex $\mv_0$  $T_{\mv_0}$ is a block diagonal matrix:
\begin{equation*}\label{eq:star-int}
T_{\mv_0}=\diag(B, -B, \dots, -B).
\end{equation*}

Now, let us notice that
the two eigenvalues $\lambda_\pm$ of $B$
satisfy 
\[
\lambda_+ \lambda_-=gH(V^2-gH).
\]
Hence, under the subcritical flow condition $gH-V^2>0$ (see \cite[p. 14]{BasCor16}), 
$\lambda_+$ and  $\lambda_-$ are of opposite sign. On the contrary,
under the supercritical flow condition $gH-V^2<0$, $\lambda_+$ and  $\lambda_-$ have the same sign;
but as $\lambda_++ \lambda_-= (g+H)V>0$, 
in this case they are both strictly positive.

Now, we distinguish between these two flow conditions.
\begin{enumerate}[1)]
\item If $gH-V^2<0$, 
then at the external vertices
the only choice for a totally isotropic subspace $Y_{\mv_i}$ associated with $T_{\mv_i}$ is $\langle (0,0)^\top\rangle$. This already yields $2J$ boundary conditions and there is no more freedom to manage
the internal vertex $\mv_0$. In other words, under this choice of boundary conditions the associated operator $\mathcal{A}$ cannot generate a group.

But we may hope for the generation of a semigroup.
Hence  as $T_{\mv_1}=  B$ has two positive eigenvalues we surely need to  impose that $Y_{\mv_1}=\{ (0,0)^\top\}$,   i.e.,
\begin{equation}\label{eq:stvn-bcv1}
h(\mv_1)=u(\mv_1)=0,
\end{equation}
while we are free to impose 
boundary conditions  or not at $\mv_j$, for all $j\geq 2$.
Since $T_{\mv_0}$ has 2 positive eigenvalues and $2(J-1)$ negative eigenvalues,
by \autoref{l:nonpositiveiso}, the maximal dimension of a subspace  $Y_{\mv_0}$ of the nonpositive isotropic cone
associated with $T_{\mv_0}$ is $2(J-1)$. 
Choosing $Y_{\mv_0}$ as the subspace associated with the negative eigenvalues leads to a decoupled system and is of {less} interest.
Letting instead
\begin{equation*}\label{eq:condtransmissionnatural}
Y_{\mv_0}=\lin\{(1,0,1,0, \dots, 1,0)^\top, (0,1,0,1, \dots, 0, 1)^\top\},
\end{equation*}
we observe that
\[
T_{\mv_0}\xi \cdot \bar \xi=(2-J) B(x,y)^\top\cdot (\bar x, \bar y)^\top\leq 0,
\]
for all $\xi\in Y_{\mv_0}$, i.e., $\xi=x(1,0,1,0, \dots, 1,0)^\top+y (0,1,0,1, \dots, 0, 1)^\top$ for some $x,y\in \mathbb{C}$, since $B$ is positive definite.
This choice   corresponds to the 
continuity of the water depth and the velocity at $\mv_0$, namely
\[
h_j(0)=h_1(\ell_1), \quad v_j(0)=v_1(\ell_1)\quad \text{for all }j\geq 2
\]
and yields $2(J-1)$ boundary conditions. They are complemented by the two conditions~\eqref{eq:stvn-bcv1} at $\mv_1$,   and by no conditions at $\mv_i$ for $i\ge 2$. This leads to $k =2J$ boundary conditions for which 
\[
T_\mv \xi\cdot \bar \xi\leq 0, \quad \text{for all }\xi\in Y_\mv,  \mv\in \mV.
\]
To conclude the generation of a semigroup
by \autoref{prop:main2} it suffices to notice that 
\eqref{eq:basis}  is valid because  $\widetilde {Y_{\mv_1}^\perp} = \mathbb{C}^2\times \{(0,0)\}^{J-1}$ is clearly independent of  $\widetilde {Y_{\mv_0}^\perp}$, 
while $\widetilde {Y_{\mv_j}^\perp}$ are trivial for all $j\ge 2$, so \eqref{eq:independent-Y} holds, and
 \[
\dim Y_{\mv_0}= 2,\quad
 \dim Y_{\mv_1}= 0,\quad  \hbox{ and }\quad  \dim Y_{\mv_j}= 2 \text{ for all }j\geq 2.
\]
Furthermore, by \autoref{lem:cond4.4} and \autoref{prop:real}, the semigroup is real.
Finally, by \autoref{lem:serge-diagon} condition  \eqref{eq:cond-positive-main} does not hold, hence the semigroup is not positive.
\item If we are in the subcritical case $gH-V^2>0$, then the eigenvalue $\lambda_+$
 (resp. $\lambda_-$ ) is positive (resp. negative). 
 
 Let us first analyze the possibility to have a group.
 In that case, by \autoref{l:iso}, 
 at any external vertex $\mv$ a totally isotropic subspace of $q_\mv$
 is of dimension at most one, while at the interior vertex $\mv_0$
 it is at most $J$. Let us present the following example.
 If $U_\pm$ is the  normalized eigenvector of $B$ associated with $\lambda_\pm$, then according to \eqref{eq:isotropic-vec},
 \[
 U=U_-+\imath\sqrt{\frac{\lambda_-}{\lambda_+}}U_+
 \]
 is an isotropic vector of the sesquilinear form associated with $B$.
 Therefore for all $j=1,\dots, J$, we take
 $Y_{\mv_j}$ as the vector space spanned by   $U$.
 We proceed similarly at $\mv_0$ by fixing
 $J$ isotropic vectors  constructed in the proof of \autoref{l:iso}.
 To have a coupling system, one possibility is the following one.
 We notice that
 the eigenvectors of $T_{\mv_0}$ associated with positive eigenvalues are
 \[
 U_1^+=(U_+^\top, 0, \dots, 0)^\top
 \]
 for the eigenvalue $\lambda_+$
 and
 \[
 U_2^+=(0, U_-^\top, 0, \dots, 0)^\top,\dots, U_J^+=(0, 0, \dots, 0, U_-^\top)^\top
 \]
 for the eigenvalue $-\lambda_-$. Similarly, the eigenvectors of $T_{\mv_0}$ associated with negative eigenvalues are
 \[
U_1^-=(U_-^\top, 0, \dots, 0)^\top
 \]
 for the eigenvalue $\lambda_-$
 and
 \[
U_2^-=(0, U_+^\top, 0, \dots, 0)^\top,\dots, U_J^-=(0, 0, \dots, 0, U_+^\top)^\top
 \]
 for the eigenvalue $-\lambda_+$.
We can now take  $Y_{\mv_0}$ as the vector space spanned by   
 $U_J^-+U_{1}^+$
 and by vectors
 $U_j^-+U_{j+1}^+$  for all $j=1,\cdots, J-1$.
 Then   $\widetilde{Y_{\mv_0}^\perp}$ is clearly independent of $\left\{\widetilde{Y_{\mv_i}^\perp}\;:\; i = 1,\dots,J\right\}$
which is also an independent set itself, and  \eqref{eq:basis}  holds since
  $\sum_{\mv\in\mV} \dim Y_\mv= 2J$.   Furthermore, each $Y_\mv$ is by construction a totally isotropic subspace associated with the quadratic form $q_\mv$, hence we are in the position to apply \autoref{prop:main1} and deduce that  the associated operator $\mathcal{A}$   generates a group, which is by \autoref{cor:main1} is unitary if and only if $V=0$.
 As before  the (forward) semigroup is real  and does not preserve positivity.

 In the subcritical case $gH-V^2>0$,  examples of boundary conditions leading to a semigroup can be
 easily built as before.
 \end{enumerate} 
 
\subsection{Wave type equations\label{exa:waves}}

Wave-type equations on  graphs have been intensively studied in the literature, let us mention \cite{Ali94,Klo12,LagLeuSch94,LeuSch89,Mug14,NicVal07}.  Here we focus on extending these results to rather general elastic systems modeled as 
\begin{equation}\label{eq:waves}
\ddot{u}_\me(t,x)=u_\me''(t,x)+\alpha_\me \dot{u}_\me'(t,x)+\beta_\me \dot{u}_\me(t,x)+\gamma_\me u'_\me(t,x),\qquad t\ge 0,\ x\in (0,\ell_\me),
\end{equation}
where $\alpha_\me \in C^1([0,\ell_\me])$
and $\beta_\me, \gamma_\me\in L^\infty (0,\ell_\me)$  are real-valued functions.  For the sake of simplicity, we hence restrict to stars as in Figure~\ref{fig:star}, which can be regarded as building blocks of more general networks.
It turns out that \eqref{eq:waves}  is equivalent to
\[
\dot{U}_\me= M_\me U_\me'+ N_\me U_\me,
\]
for the vector function $U_\me=(u_\me', \dot{u}_\me)^\top
$, where
\[
M_\me=\begin{pmatrix}
0 & 1 \\ 1 & \alpha_\me
\end{pmatrix},\qquad
N_\me=\begin{pmatrix}
 0 & 0\\
 \gamma_\me &  \beta_\me \\
\end{pmatrix}.
\]
As  $M_\me$ is symmetric,  \autoref{assum} are automatically satisfied by choosing  $Q_\me$ as the identity matrix.
As usual,  the boundary conditions at the vertices are related to the values of $M_\me$
at
the endpoints of the edge $\me$, that generically are given by
\[
M_\me(\mv)=\begin{pmatrix}
0& 1 \\ 1 & \alpha_\me(\mv)
\end{pmatrix},
\]
when $\mv$ is one of the endpoints of $\me$; hence $M_\me(\mv)$ has two real eigenvalues of opposite sign,  
\[
\lambda_\pm=\frac{1}{2}\left(\alpha_\me(\mv)\pm\sqrt{\alpha_\me(\mv)^2+4}\right).
\]

We are thus in the same situation as in the Saint-Venant model with the subcritical condition: we thus do not give any further details about the choice of boundary conditions at the interior vertices, since all ideas presented there carry over to the present case.
 Note that for an exterior vertex $\mv$, 
 Neumann condition
 \[
u_{\me}'(\mv)=0,
 \]
 can be equivalently described by means of the totally isotropic subspace
 $Y_\mv$ spanned by    $(0,1)^\top$. 
 In this case, $(0,1)^\top$ will be an isotropic vector if and only if $\alpha_\me(\mv)=0$.
 On the contrary, Dirichlet boundary condition
 at  an exterior vertex $\mv$ 
 \[
u_{\me}(\mv)=0,
 \]
 leads to
 $\dot{u}_{\me}(\mv)=0,$
and corresponds to the choice $Y_\mv$ spanned by   $(1,0)^\top$.
Our approach  also allows us to discuss absorbing boundary condition
(see~\cite{Chen:79} for instance))
\[
u_\me'(\ell_\me)=-\kappa \dot{u}_\me(\ell_\me)
\]
with $\kappa\in (0,\infty)$: indeed it then corresponds to the space spanned by
$U=(-\kappa, 1)^\top$,
hence
\[
M_\me(\mv) U \cdot \bar U=\alpha_\me(\mv)-  2\kappa \leq 0,
\]
that will be nonpositive as soon as $\kappa\geq \alpha_\me(\mv)/ 2$. 

 Let us finally notice that provided the boundary conditions  are nice enough
  to generate a group, it will be   unitary group if and only if $\gamma_\me=0$ 
  and $\beta_\me=\frac{\alpha'_{\me}}{2}$, for every edge $\me$.
  In case of a generation of a semigroup, it will be real provided \eqref{eq:realcond-0} holds,
  while  if all $\gamma_\me$ are nonpositive, it  will be never positive (this is in particular the case for the wave equation, as $\alpha_\me=\beta_\me=\gamma_\me=0$).

\subsection{Hybrid transport/string equations}

Network-like systems described by equations which are partially of diffusive and partially of transport type have been studied by Hussein and one of the  authors in~\cite{HusMug13}; characterization of the right transmission conditions leading to well-posedness has proved a difficult task. In the following we turn to the different but related task of connecting transport and wave equations; this can suggest natural ways of coupling first and second order differential operators.

{The simplest  toy model is to consider a  scalar wave equation set in an interval  $(0,\ell_2)$
and a scalar transport equation in $(0,\ell_1)$ coupled via their common endpoint that is assumed to be $0$.
This means that we consider the system
\begin{equation}
\label{eq:systwavetransport}
\left\{
\begin{array}{rcll}
\ddot{u}(t,x)&=& u''(t,x),\qquad &t\ge 0,\ x\in (0,\ell_2),\\
\dot{p}(t,x)&=& -p'(t,x),\qquad &t\ge 0,\ x\in (0,\ell_1),
\end{array}
\right.
\end{equation}
with Dirichlet boundary condition at $\ell_2$
\begin{equation}
\label{eq:Dirbc}
u(t,\ell_2)=0,
\end{equation}
and the 
transmission condition at $0$
\begin{equation}
\label{eq:transmissionbcwavetransport}
u'(t,0)=\alpha p(t,0), \quad \dot{u}(t,0)=\beta p(t,0),
\end{equation}
where $\alpha, \beta$ are two real numbers fixed below in order to guarantee well-posedness of our system.

As in \autoref{exa:waves}, by identifying the interval $(0,\ell_i)$ with $\me_i$, $i=1, 2$,
 introducing $u_{\me_1}=(u', \dot{u})^\top=(u_{\me_1, 1}, u_{\me_1, 2})^\top$, and setting $u_{\me_2}=p$,
 we can transform our system into a first order system   of the form \eqref{eq:max1}
 with 
\[
M_{\me_1}=\begin{pmatrix}
0 & 1\\
1 & 0
\end{pmatrix}\quad\hbox{and therefore}\quad 
Q_{\me_1}=\begin{pmatrix}
1 & 0\\
0 & 1
\end{pmatrix}
\]
and $M_{\me_2}=-1$ (hence $Q_{\me_2}=1$). 

 Notice that our network possesses three vertices, 
$\mv_0$,} corresponding to $0$ (with $E_{\mv_0}=\{\me_1, \me_2\}$),  $\mv_1$,}corresponding to the endpoint $\ell_1$ of $\me_1$  (with $E_{\mv_1}=\{\me_1\}$), and  $\mv_2$, corresponding to the endpoint $\ell_2$ of $\me_2$  (with $E_{\mv_2}=\{\me_2\}$).
Since there is no boundary condition at $\mv_1$, we  set
${Y_{\mv_1} = \C^2,}$
to take into account  \eqref{eq:Dirbc}, we set
$Y_{\mv_2} = \C\times \{0\},$
and
  \eqref{eq:transmissionbcwavetransport} requires to take
\[
Y_{\mv_0} = \lin\left\{ (\alpha,\beta,1)^\top \right\}.
\]
With this notation, we see that
\[
\begin{split}
q_{\mv_1}(x)&=-|x|^2,\qquad  x\in \mathbb{C}^2,
\\
q_{\mv_2}(\xi)&=0, \qquad \xi\in Y_{\mv_2},
\end{split}
\]
while 
\[
q_{\mv_0}(\xi)=(1-2\alpha\beta) |x_3|^2\quad \text{for all }\xi=(x_1,x_2, x_3)^\top \in Y_{\mv_0}.
\]
This means that $Y_{\mv_0}$ is a subspace of the nonpositive isotropic cone associated with 
$q_{\mv_0}$ if and only if $2\alpha\beta \geq 1$.

Finally, as 
\[
\widetilde{Y_{\mv_1}^\perp}= \{0\}\times  \{0\}\times  \{0\},\quad
\widetilde{Y_{\mv_2}^\perp}=  \{0\}\times \C \times  \{0\},\quad  
 \widetilde{Y_{\mv_0}^\perp}=\lin \{(1,0,-\alpha)^\top, (0,1,-\beta)^\top\},
\]
condition \eqref{eq:basis} is fulfilled
if $\beta\ne 0$. Therefore, system \eqref{eq:systwavetransport} with the boundary/transmission conditions
\eqref{eq:Dirbc}-\eqref{eq:transmissionbcwavetransport} is governed by a strongly continuous semigroup
if $2\alpha\beta \geq 1$.  The semigroup is real but not positive.
 
 \medskip
A toy model of a transport process sandwiched between two diffusive ones, all three taking place on intervals of length 1, was proposed in~\cite[\S~2]{HusMug13} (we refer to that paper for an interpretation of such a model in terms of delayed equations and for a possible biologic motivation). If diffusion is replaced by a wave equation, the corresponding hyperbolic system satisfies the \autoref{assum} with
\[
M:=\begin{pmatrix}
0 & 1 & 0 & 0 & 0\\
1 & 0 & 0 & 0 & 0 \\
0 & 0 & -1 & 0 & 0\\
0 & 0 & 0 & 0 & 1\\
0 & 0 & 0 & 1 & 0
\end{pmatrix}\quad\hbox{and }N\equiv 0,\hbox{ hence}\quad 
Q:=\begin{pmatrix}
1 & 0 & 0 & 0 & 0\\
0 & 1 & 0 & 0 & 0\\
0 & 0 & 1 & 0 & 0\\
0 & 0 & 0 & 1 & 0 \\
0 & 0 & 0 & 0 & 1
\end{pmatrix}:
\]
where the unknown is 
\[
\mmu:=(u', \dot{u}, p, v', \dot{v})^\top.
\]
Here we have  two interior vertices $\mv_1,\mv_2$: the former corresponds to the endpoint 0 of both $(0,\ell_1)$
and $(0,\ell_2)$, while the latter corresponds to the endpoint 0 of $(0,\ell_3)$ and the endpoint $\ell_1$ of $(0,\ell_1)$.
Hence imposing conditions \eqref{eq:Dirbc}-\eqref{eq:transmissionbcwavetransport} as before along with
 Dirichlet boundary condition at $\ell_3$
\begin{equation}
\label{eq:Dirbcone3}
v(t,\ell_3)=0,
\end{equation}
and the 
transmission condition at $\mv_2$
\begin{equation}
\label{eq:transmissionbcwavetransport3}
v'(t,0)=\gamma p(t,\ell_1), \quad \dot{v}(t,0)=\delta p(t,\ell_1),
\end{equation}
with   two real numbers  $\gamma, \delta$, one can show well-posedness of this problem
if  $2\alpha\beta \geq 1$, $2\gamma\delta \geq -1$ and $\gamma\ne 0$.   The semigroup is, again, real but not positive.

\subsection{Hybrid string/beam equations}
Ammari et al.~\cite{AmmMeh11,AmmMerReg12} have proposed models that consist of several combinations of strings and beams. 
In particular,  in~\cite{AmmMeh11} a collection of 1 wave and $N$ beam equations is considered on a star graph. The necessary $4N+2$ conditions consist of the following;
\begin{itemize}
\item $N+1$ transmission conditions: continuity of all solutions at the star's center along with a Kirchhoff-type condition on the beams' shear forces (third derivative of solutions) and the strings' flux (first derivative);
\item $3N+1$ boundary conditions: zero conditions on the beams' slopes and bending moments (first and second derivatives, respectively) along with closed feedbacks on the beams' shear forces and the strings' fluxes.
\end{itemize}
This model trivially satisfies  \autoref{assum} with  
\[
M_\me=\begin{pmatrix}
0 & 1\\
1 & 0\\
\end{pmatrix}
\quad
\hbox{and}
\quad
M_\me=\begin{pmatrix}
0 & 0 & 0 & 1\\
0 & 0 & 1 & 0\\
0 & 1 & 0 & 0\\
1 & 0 & 0 & 0\\
\end{pmatrix}
\]
in the case of the string-like and beam-like edges, respectively. In this case, \autoref{prop:main2} applies and we deduce that the hyperbolic system is governed by a strongly  continuous semigroup. Indeed, arguing as in \autoref{rk:globalbc} we deduce that this semigroup is contractive, hence the energy of solutions is decreasing.

The aim in~\cite{AmmMeh11} was to discuss the stabilization of an elastic system: the reason why it makes sense to consider closed feedbacks is that if they are replaced by zero conditions, then in view of the duality between continuity and Kirchhoff conditions, a direct computation shows the assumptions of \autoref{cor:main1} are satisfied and we conclude that the system is governed by a unitary group. 

\subsection{The Dirac equation}\label{sec:dirac}
The 1D Dirac equation is briefly discussed in~\cite[\S~1.1]{Tha92}: it was later extended to the case of networks and thoroughly studied by Bolte and his coauthors~\cite{BolHar03,BolSti07}, who also observed that it then takes on each edge the form
\[
\imath \hbar \frac{\partial}{\partial t} \psi=\left(\hbar c\begin{pmatrix}0 & -1\\ 1 & 0\end{pmatrix} \frac{\partial }{\partial x} +mc^2 \begin{pmatrix}1 & 0\\ 0 & -1\end{pmatrix}\right)\psi\qquad \hbox{in }(-\infty,\infty)\times (-\infty,\infty)
\]
for a $\C^2$-valued unknown $\psi=(\psi^{(1)},\psi^{(2)})$. A parametrization of skew-adjoint realizations on  a network has been presented in~\cite{BolHar03}; we are going to study the more general problem of finding boundary conditions that lead to a group or merely a semigroup,  which still yields \textit{forward} well-posedness of the Dirac equation.

To begin with, observe that \autoref{assum} are especially satisfied by taking
\[
M_\me =\begin{pmatrix}
0 & \imath c\\
-\imath c & 0
\end{pmatrix},\quad
Q_\me =\begin{pmatrix}
1 & 0\\
0 & 1
\end{pmatrix},\quad\hbox{and}\quad N_\me =\begin{pmatrix}
- \imath \frac{mc^2}{\hbar} & 0\\
 0 &  \imath \frac{mc^2}{\hbar}
\end{pmatrix},\qquad \me\in\mE.
\]
We hence deduce from Lemma~\ref{lem:cond4.4} that, no matter what the boundary/transmission conditions look like, a semigroup governing the Dirac equation cannot be real, let alone positive.

Let us now study the quadratic form $q_\mv$. We 
observe that
by~\eqref{eq:defTv},  {$T_\mv$ is $k_\mv\times k_\mv$ block-diagonal matrix whose diagonal blocks equal $\pm M_\me$, according to the appropriate value $\iota_{\mv \me}$ of the incidence matrix.
Hence, if we write
\[
\gamma_\mv(\psi)=(\psi^{(1)}_{\me}(\mv), \psi^{(2)}_{\me}(\mv))^\top_{\me\in \mE_{\mv}} =: (\xi_\me,\eta_\me)^\top_{\me\in \mE_{\mv}}\in\C^{k_\mv} ,
\]
then this vector  is an isotropic vector for the associated quadratic form $q_\mv$ if and only if 
\begin{equation}\label{eq:vanishingima-new}
\sum_{\me\in \mE} \iota_{\mv \me} \Im(\xi_{\me}  \cdot\bar \eta_{\me})=0.
\end{equation}
Any set of vector spaces $Y_\mv\subseteq\C^{k_\mv}$, consisting of vectors $(\xi_\me,\eta_\me)^\top_{\me\in \mE_{\mv}}$ that satisfy~\eqref{eq:vanishingima-new} possibly}
 induces boundary conditions that determine a realization of $\mathcal A$ generating a group; in fact, necessarily a \textit{unitary} group, since~\eqref{cond:iso} is clearly satisfied. A well-known example is that of vectors  $\xi$ being scalar multipliers of the ``characteristic function'' ${\mathbf 1}_{\mE_\mv}:={\mathbf 1}_{\{\me\in\mE_\mv\}}$ and $\eta$ with the same support $\mE_\mv$ and orthogonal to $\xi$, where $\mE_\mv\subset \mE$ is the set of edges incident with any given vertex $\mv$. {Recalling the notation $\iota_{\mE_\mv}$ from 
 \autoref{ex:Maxwell},}
  this gives rise to 
 \[ Y_\mv:=\lin\{{\mathbf 1}_{\mE_\mv}\} \oplus \lin\left\{\iota_{\mE_\mv}\right\}^\perp,
 \]
 corresponding to continuity of the first coordinate of $\psi$ across all vertices, and a Kirchhoff condition on the second coordinate,
  (clearly, swapping the transmission conditions in vertices satisfied by the two coordinates yields again a group generator); in this case, the   condition~\eqref{eq:basis} need not be satisfied. However, we can apply Theorem~\ref{thm:new-for-dirac}, since this choice of $Y_\mv$ implies that $T_\mv^{-1} Y_\mv^\perp=Y_\mv$. Another possibility is e.g.\ given by letting $Y_\mv:=\{(\alpha_1,\alpha_1,\ldots,\alpha_{|\mE_\mv|},\alpha_{|\mE_\mv|})^\top:\alpha \in \C^{k_\mv} \}$, corresponding to $\psi_\me^{(1)}(\mv)=\psi_\me^{(2)}(\mv)$.
  
If we turn to the issue of mere contractive well-posedness, then we observe that any non positive isotropic cone consists of vectors  such that
\begin{equation}\label{eq:positiveima-new}
  \sum_{\me\in \mE} \iota_{\mv \me} \Im(\xi_{\me} \bar \eta_{\me})\ge 0;
\end{equation}
accordingly,  any  $Y_\mv$ all of whose elements satisfy~\eqref{eq:positiveima-new} {is a candidate for generation of a semigroup. Indeed, by Theorem~\ref{prop:main2} such a choice of {$Y_\mv$}}
 induces a realization of the operator $\mathcal A$ that generates a strongly continuous contractive semigroup {if additionally 
 \eqref{eq:basis} holds 
or all elements of $T_\mv^{-1}Y_\mv^\perp$ satisfiy~\eqref{eq:positiveima-new}.} A somewhat trivial example is given by  $Y_\mv:=\{0\}\oplus 	\C^{|\mE_\mv|}$, corresponding to decoupled case of Dirichlet boundary conditions imposed at 
 {an} endpoint of each interval on both coordinates of the unknown; or, more generally (and interestingly) of  {$Y_\mv=\{(\imath B\eta,\iota_{\mE_\mv}\odot\eta):\eta\in \C^{k_\mv}\}$ for some  matrix $B$ of size {$k_\mv$ } with only purely imaginary eigenvalues
 (implying that $B$ and  $B^\ast$ are accretive and dissipative)
  since   $Y_\mv^\perp=\{(\eta,\imath \iota_{\mE_\mv}\odot B^\ast \eta):\eta\in \C^{k_\mv}\}$, 
   where for two vectors $a=(a_\me)_{\me\in \mE}$
 and  $b=(b_\me)_{\me\in \mE}$,  we recall that   $a\odot b$ means the Hadamard product of $a$ and $b$ defined by
 \[
 a\odot b=(a_\me b_\me)_{\me\in \mE}.
 \]}

\subsection{Second sound in networks}

So-called ``second sound'' is an exotic, wave-like phenomenon of heat diffusion that was first proposed by Landau to explain unusual behaviors in ultracold helium.
Second sound has ever since been observed in several materials -- most recently by Huberman et al.~\cite{HubDunChe19} also in graphite around cozy $130^\circ K$. As thoroughly discussed in~\cite{Racke}, one classical model going back to Lord and Shulman~\cite{LorShu67} boils down to the linear equations of thermoelasticity 
 \begin{equation}\label{systsecondsound}
\left\{
\begin{array}{rcll}
\ddot{z}-\alpha z'' + \beta \theta'&=&0  \quad &\hbox{ in  }  (0,\ell) \times(0,+\infty),\\
\dot{\theta}+\gamma q'+\delta \dot{z}'&=&0 \quad &\hbox{ in  }  (0,\ell) \times(0,+\infty),\\
\tau_0 \dot{q}+q + \kappa \theta'&=&0  \quad&\hbox{ in  }  (0,\ell) \times(0,+\infty),
\end{array}
\right.
\end{equation}
where  $z$, $\theta$, and $q$ represent the displacement, the temperature difference to a fixed reference temperature, and the heat flux, respectively, and $\alpha, \beta,\gamma, \delta, \tau_0,\kappa$ are positive constants. Racke has discussed in~\cite{Racke} the asymptotic stability of this system under three classes of boundary conditions:
\begin{enumerate}[(i)]
\item $z(0)=z(\ell)=q(0)=q(\ell)=0$,
\item $z(0)=z(\ell)=\theta(0)=\theta(\ell)=0$,
\item $\alpha z'(0)=\beta\theta(0)$, $\theta'(0)=0$, $z(\ell )=\theta(\ell)=0$, 
\end{enumerate}
 proving in detail well-posedness in the case of (i) and suggesting to use a similar strategy to study (ii) and (iii). In fact, the boundary conditions (iii) actually represents a dynamic condition for the unknown $q$ at 0, and hence seem to require a subtler analysis: we will consider them along with further hyperbolic systems with dynamic boundary conditions in a forthcoming paper~\cite{KraMugNic20b}.
We rewrite~\eqref{systsecondsound} as~\eqref{eq:max1} by letting
$u_\me := (z'_{\me}, \dot{z}_{\me}, \theta_\me, q_\me)$; then the  \autoref{assum} are satisfied taking 
\[
M_\me := \begin{pmatrix}
  0& 1& 0& 0\\
  \alpha& 0& -\beta&0\\
  0& -\delta& 0 &-\gamma\\
  0& 0& -\frac{\kappa}{\tau_0}& 0
\end{pmatrix},\quad
Q_\me := \begin{pmatrix}
  \alpha\delta& 0& 0& 0\\
 0& \delta& 0&0\\
  0&  0 &\beta &0\\
  0& 0& 0 &\frac{\beta\gamma\tau_0}{\kappa}
\end{pmatrix},\quad \text{and}\quad
N_\me := \begin{pmatrix}
  0 & 0& 0& 0\\
 0& 0 & 0&0\\
  0&  0 & 0 &0\\
  0& 0& 0 & -\frac{1}{\tau_0}
\end{pmatrix}.\]
The choice of $Q_\me$ is rather natural and indeed a similar term was also used to regularize the inner product by Racke, see~\cite[(18)]{Racke}. A direct computation shows that
\[
Q_\me M_\me=\begin{pmatrix}
0 & \alpha\delta & 0 & 0\\
\alpha\delta & 0 & -\beta\delta & 0\\
0 & -\beta\delta & 0 & -\beta \gamma\\
0 & 0 & -\beta\gamma & 0
\end{pmatrix}
\]
with  four eigenvalues of the form
$\pm\sqrt{\frac{H\pm2\sqrt{K}}{2}},$
where $H:= \alpha^2 \delta^2+\beta^2 \delta^2+ \beta^2\gamma^2$ and
$K:= H^2 - 4 \alpha^2\beta^2\gamma^2\delta^2$.
 Because $H^2>K$ whenever $\alpha,\beta,\gamma,\delta>0$, $Q_\me M_\me$ has two positive and two negative eigenvalues. 
 
 This is coherent with both above choices of boundary conditions (in the purely hyperbolic case of $\tau_0\ne 0$). For the sake of simplicity, let us focus for a while on the case of an individual interval that can be expressed in our formalism taking as $Y$ at each endpoint  the spaces
\[
\C\times\{0\}\times \C\times \{0\}
\quad\hbox{and}\quad  
 \C\times\{0\}\times \{0\}\times \C,
\]
respectively. A further possible choice for a subspace of the  null isotropic cone is e.g.
\[
\{0\}\times \C\times \{0\}\times \C,
\]
corresponding to 
\begin{itemize}
\item $z'(0)=z'(\ell)=\theta(0)=\theta(\ell)=0$.
\end{itemize}
If we however regard an interval as a loop (a network with one edge and one vertex), all these boundary conditions turn out to be only special cases of a more general setting. Indeed, a direct computation shows that a necessary condition for the vector
\[
\gamma(u):=\big(z'(0),\dot{z}(0),\theta(0),q(0), z'(\ell), \dot{z}(\ell),\theta(\ell),q(\ell) \big)^\top
\]
to lie in the  null isotropic cone of $T$ defined as in \eqref{eq:Tdef}
is that
\begin{equation}\label{eq:z1z2top}
\Re(Z_1 \cdot \bar Z_2 - Q\cdot\bar \Theta - Z_2 \cdot \bar  \Theta)=0
\end{equation}
where
\[
Z_1:=\begin{pmatrix}
-\alpha z'(0)\\ \alpha z'(\ell)
\end{pmatrix},\quad 
Z_2:=\begin{pmatrix}
\delta  \dot{z}(0)\\ \delta  \dot{z}(\ell)
\end{pmatrix},\quad \Theta:=\begin{pmatrix}
-\beta \theta(0)\\ \beta \theta(\ell)
\end{pmatrix},\quad Q:=\begin{pmatrix}
\gamma q(0)\\ \gamma q(\ell)
\end{pmatrix}.
\]
This is for instance the case if
\[
Z_1\perp Z_2\qquad\hbox{and}\qquad (Z_2+Q)\perp \Theta;
\]
this condition can e.g.\ be enforced by imposing
\[
Z_1,\Theta\in \lin\{ \mathbf 1_{\C^2}\}^\perp,\quad Z_2,Q\in \lin\{ \mathbf 1_{\C^2}\},
\]
where $\lin\{ \mathbf 1_{\C^2}\}$ is the subspace of $\C^2$ spanned by the vector $(1,1)^\top$. This is a hardly surprising choice for the reader familiar with evolution equations on networks which corresponds to periodic-type conditions
\begin{itemize}
\item $\dot{z}(0)=\dot{z}(\ell)$, $ z'(0)=z'(\ell)$, $\theta(0)=\theta(\ell)$, and $q(0)=q(\ell)$
\end{itemize}
and in turn to
\[
\gamma(u)\in Y:=\lin\{ \mathbf 1_{\C^2}\} \oplus \lin\{ \mathbf 1_{\C^2}\} \oplus \lin\{ \mathbf 1_{\C^2}\} \oplus \lin\{ \mathbf 1_{\C^2}\}.
\]
Indeed, $\dim(Y)=4$, hence condition \eqref{eq:basis_global}  is satisfied.

This paves the way to the study of second sound on collection of intervals with coupled boundary conditions, an especially interesting issue, as second sound has been conjectured in~\cite{HubDunChe19} to take place in graphene -- a network: more precisely, hexagonal lattice of carbon atoms --, already at  room temperature.

Indeed, one can apply our general theory in order to describe transmission conditions leading to well-posedness; an easy computation shows that the relevant equation is a higher dimensional counterpart of~\eqref{eq:z1z2top}. An educated guess suggests to study conditions of continuity (across the ramification nodes) on both displacement and temperature, i.e., on $z$ -- hence $\dot{z}$ -- and $\theta$, along with a Kirchhoff-type condition on $z'$ and $q$. It is remarkable that this choice does not satisfy~\eqref{eq:z1z2top}.
However, it is not difficult to see that all boundary values that satisfy either
\begin{itemize}
\item continuity on $z$ -- hence $\dot{z}$ -- as well as $q$, along with
\item Kirchhoff-type conditions
\[{ \sum_{\me\in \mE_\mv}{\iota}_{\mv \me} z'_{\me}(\mv)=0}\quad\hbox{and}\quad 
{ \sum_{\me\in \mE_\mv}{\iota}_{\mv \me} \theta_{\me}(\mv)=0}
\]
\end{itemize}
on $z'$ and $\theta$; or else
\begin{itemize}
\item continuity on $z'$ and $\theta$, along with
\item Kirchhoff-type conditions 
\[
{ \sum_{\me\in \mE_\mv}{\iota}_{\mv \me} {\dot{z}_{\me}}(\mv)=0}
 \quad\hbox{and}\quad 
 { \sum_{\me\in \mE_\mv}{\iota}_{\mv \me} q_{\me}(\mv)=0}
\]
\end{itemize}
on $z$ -- hence $\dot{z}$ -- as well as $q$ define a totally isotropic subspace of the null isotropic cone. (If the vertex $\mv$ has degree 1, then in both cases the first conditions become  void, whereas the second reduce to Dirichlet conditions.) Again, we see that
\eqref{eq:basis}  is satisfied and conclude that the system is governed by a strongly continuous group on $\bL^2(\mathcal G)$. 

All above spaces $Y$ are invariant under taking both the real and the positive part.
Furthermore, $Q,M,N$ are real valued and $Q,N$ are diagonal, but  $M$ is not, hence by \autoref{prop:real} and \autoref{prop:posit}  the semigroup generated by $\mathcal A$ with any of these transmission conditions is real but not positive.

Furthermore, $\mathcal A$ generates merely a semigroup whenever the space $Y$ defining the boundary conditions is a subspace of the nonpositive isotropic cone of $T$: this can e.g.\ enforced by assuming 
that
\[
Z_1=B Z_2\qquad\hbox{and}\qquad (Z_2+Q)=-C \Theta
\]
for some dissipative matrices $B,C$, provided $Y$ has the correct dimension. 
Because
\[
Q_\me N_\me = \begin{pmatrix}
0 & 0& 0& 0\\
 0& 0 & 0&0\\
  0&  0 & 0 &0\\
  0& 0& 0 &-\frac{\beta\gamma}{\kappa}
\end{pmatrix}
\]
is dissipative and $Q_{\me},M_\me$ are spatially constant, this semigroup is then automatically contractive.

\appendix
\section{Hyberbolicity revisited}

\bl\label{l:hyperbolic_Q-new} 
Let  $[0,\ell_\me]\ni x\mapsto M_\me(x)\in M_{k_\me}(\C)$ be a Lipschitz continuous matrix-valued function such that $M_\me(x)$ is invertible for each  $x\in  [0,\ell_\me]$.
Then matrix $M_\me(x)$ is \emph{Lipschitz-diagonalizable},
 i.e., there exist two Lipschitz continuous matrix-valued functions $[0,\ell_\me]\ni x\mapsto S_\me(x)\in M_{k_\me}(\C)$ and $[0,\ell_\me]\ni x\mapsto D_\me(x)\in M_{k_\me}(\C)$ such that for all $x\in [0,\ell_\me]$ both $S_\me(x),D_\me(x)$ are invertible,  $D_\me(x)$ is diagonal  and real, and furthermore
\be\label{assumMehyperbolic}
 M_\me(x)= S_\me^{-1}(x) D_\me(x) S_\me(x),
\ee
if only if \autoref{assum}.(\ref{assum:Q}) holds for some Lipschitz continuous, uniformly positive definite matrix-valued function $[0,\ell_\me]\ni x\mapsto Q_\me(x)\in M_{k_\me}(\C)$.
\el
\begin{proof}
If the matrix $M_\me$ can be diagonalized as above,
then we readily check 
that \autoref{assum}.(\ref{assumMe}) holds with the Hermitian matrices
\[
Q_\me(x):=S_\me^\ast(x) D_\me^{2n}(x) S_\me(x),\quad x\in [0,\ell_\me],
\]
for any $n\in \N_0:=\{0,1,2,\ldots\}$. This matrix is indeed uniformly positive definite, 
because for any $\xi\in \mathbb{C}^{k_\me}$, one has
\begin{equation}
\label{eq:uniforbdQme}
Q_\me(x)\xi\cdot \bar \xi=D_\me^{2n}(x) Y(x)\cdot \overline{Y(x)},
\end{equation}
where $Y(x):=S_\me(x) \xi$.
Since  the mappings
\[
[0,\ell_\me]\to [0,\infty): x\to \|D_\me^{-2n}(x) \| \quad\hbox{ and }\quad
[0,\ell_\me]\to [0,\infty): x\to \|S_\me^{-1}(x) \|
\]
{are} continuous and positive, there exists a positive constant $\alpha$ such that
\[
0< \|D_\me^{-2n}(x) \|\leq \alpha \hbox{ and } 0<\|S_\me^{-1}(x)\|\leq \alpha \text{ for all } x\in  [0,\ell_\me].
\]
These estimates in \eqref{eq:uniforbdQme} lead to
\[
Q_\me(x)\xi\cdot \bar \xi\geq {\alpha^{-1} \|Y(x)\|^2 \geq}
{\alpha^{-3}} \|\xi\|^2  \text{ for all } x\in  [0,\ell_\me].
\]
Finally as a composition of Lipschitz  continuous mappings, $Q_e{(\cdot)}$ is Lipschitz continuous, too.

Conversely, let us assume that {\autoref{assum}.(\ref{assum:Q})} holds. First, as each $Q_\me(x)$ is Hermitian and positive definite,  $Q_\me^{\frac{1}{2}}(x)$ is well-defined.  Now we notice that 
\autoref{assum}.(\ref{assumMe})  is equivalent to
\[
Q_\me^{\frac{1}{2}}(x) M_\me(x)Q_\me^{-\frac{1}{2}}(x)=Q_\me^{-\frac{1}{2}}(x)M_\me^\ast(x)Q_\me^{\frac{1}{2}}(x){  \text{ for all } x\in  [0,\ell_\me].}
\]
 Since this right-hand side is the adjoint of the left-hand side, each matrix $Q_\me^{\frac{1}{2}}(x) M_\me(x) Q_\me^{-\frac{1}{2}}(x)$, $x\in [0,\ell_\me]$,
 is Hermitian, hence it is diagonalizable by a family of unitary matrices $S_{0,\me}(x)$ and real diagonal matrices $D_{\me}(x)$  such that
\[
S_{0,\me} (x)\left(Q_\me^{\frac{1}{2}}(x)M_\me(x)Q_\me^{-\frac{1}{2}}(x)\right)S_{0,\me}^\ast(x)=D_\me(x) {\text{ for all } x\in  [0,\ell_\me],}
\]
which yields \eqref{assumMehyperbolic} with
$S_\me(x):= S_{0,\me}(x)Q_\me^{\frac{1}{2}}(x)$.
By assumptions on $M_\me$ and $Q_\me$, both  $S_\me(\cdot),D_\me(\cdot)$ are Lipschitz continuous functions and  $S_\me(x),D_\me(x)$ are invertible matrices for all $x\in  [0,\ell_\me]$.
 \end{proof}
 
 \section{Three versions of the Fundamental Lemma of Calculus of Variations}
  
We first prove a density result in the subset of positive integrable functions; we recall the notation in~\eqref{eq:lpc} and write likewise $\mathcal{D}(0,\ell; \mathbb{R})$ for the set of real-valued test functions.
\begin{lemma}\label{l:appendixB0}
Let $\ell>0$. The set 
\[
\{\varphi^2: \varphi\in \mathcal{D}(0,\ell; \mathbb{R})\}
\]
is dense in $L^1(0,\ell; \mathbb{R}_+)$.
\end{lemma}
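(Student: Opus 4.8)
The plan is to reduce the statement, by two successive approximations, to a routine mollification argument; the only delicate point is the lack of differentiability of the square root at the origin, which I will circumvent by smoothing a merely continuous function.

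First I would reduce to continuous data. Recall that $C_c(0,\ell)$ is dense in $L^1(0,\ell)$, and that taking the positive part is a contraction in $L^1$; hence the approximants of a nonnegative $f$ may be chosen nonnegative, so that the nonnegative continuous functions with compact support in $(0,\ell)$ are dense in $L^1(0,\ell;\mathbb R_+)$. It therefore suffices to approximate, in $L^1$, an arbitrary $g\in C_c(0,\ell;\mathbb R_+)$ by squares $\varphi^2$ with $\varphi\in\mathcal D(0,\ell;\mathbb R)$.

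Given such a $g$, set $h:=\sqrt g$. Then $h$ is again nonnegative, continuous and compactly supported in $(0,\ell)$; crucially this step uses only the \emph{continuity} of the square root, not its differentiability, which is exactly why the vanishing of $g$ causes no harm. Let $K:=\operatorname{supp} g$ and $d:=\dist(K,\{0,\ell\})>0$. Fixing a standard nonnegative mollifier $(\rho_\delta)_{\delta>0}$ with $\int\rho_\delta=1$ and setting $\varphi_\delta:=\rho_\delta * h$, for every $\delta<d$ the function $\varphi_\delta$ is smooth, real-valued and supported in $K+[-\delta,\delta]\subset(0,\ell)$, so $\varphi_\delta\in\mathcal D(0,\ell;\mathbb R)$. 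Since $h$ is uniformly continuous (being continuous with compact support), $\varphi_\delta\to h$ uniformly as $\delta\to0$, and moreover $\|\varphi_\delta\|_\infty\le\|h\|_\infty$.

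It then remains to pass to the squares. Using $g=h^2$ and the factorization $\varphi_\delta^2-h^2=(\varphi_\delta-h)(\varphi_\delta+h)$, one obtains
\[
\|\varphi_\delta^2-g\|_{L^1}\le \|\varphi_\delta-h\|_\infty\,\|\varphi_\delta+h\|_{L^1}.
\]
Here $\|\varphi_\delta-h\|_\infty\to0$, while $\|\varphi_\delta+h\|_{L^1}$ stays bounded: for $\delta\le d/2$ all the functions involved are supported in the fixed compact set $K+[-d/2,d/2]\subset(0,\ell)$ and bounded by $\|h\|_\infty$. Hence $\varphi_\delta^2\to g$ in $L^1$, and combining this with the reduction of the first paragraph via the triangle inequality yields the claim. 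The only real obstacle, namely the non-smoothness of $h=\sqrt g$ where $g$ vanishes, which forbids the naive choice $\varphi=\sqrt g$, is thus resolved precisely by mollifying $h$ rather than differentiating it.
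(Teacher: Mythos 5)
Your proof is correct, but it takes a genuinely different route from the paper's. The paper's argument is much shorter: since $u\in L^1(0,\ell;\mathbb{R}_+)$ means precisely that $\sqrt{u}\in L^2(0,\ell;\mathbb{R})$, one picks test functions $\varphi_n\to\sqrt{u}$ in $L^2$ and concludes by the Cauchy--Schwarz inequality,
\[
\int_0^\ell|u-\varphi_n^2|\,dx=\int_0^\ell\bigl|(\sqrt{u}-\varphi_n)(\sqrt{u}+\varphi_n)\bigr|\,dx\le\|\sqrt{u}-\varphi_n\|_{L^2}\bigl(\|\sqrt{u}\|_{L^2}+\|\varphi_n\|_{L^2}\bigr)\longrightarrow 0,
\]
i.e.\ squaring is continuous from $L^2$ into $L^1$, so the density of $\mathcal{D}(0,\ell;\mathbb{R})$ in $L^2$ transfers at once to the density of squares in $L^1(0,\ell;\mathbb{R}_+)$. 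This sidesteps entirely the obstacle your proof is organized around: no regularity of $\sqrt{u}$ is ever needed, only its square-integrability, so there is no reason to first pass to continuous data. Your two-stage argument (reduce to nonnegative $C_c$ functions via positive parts, then mollify $h=\sqrt{g}$ and control $\varphi_\delta^2-h^2$ by uniform convergence on a fixed compact set) is sound in every step -- the contraction property of the positive part, the support bookkeeping, and the $L^\infty$--$L^1$ H\"older estimate are all correct -- but it is longer and needs the intermediate continuity of $\sqrt{g}$, which the $L^2$ trick makes unnecessary. What your route buys is a self-contained construction giving uniform (not just $L^1$) convergence of the approximants to $\sqrt{g}$; what the paper's buys is brevity, by recognizing that the nonlinear map $v\mapsto v^2$ is already continuous between the two natural Lebesgue spaces.
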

\begin{proof}
Indeed let us fix $u \in L^1(0,\ell; \mathbb{R}_+)$, then $\sqrt{u}$ belongs to $L^2(0,\ell; \mathbb{R})$
and therefore there exists a sequence $(\varphi_n)_{n\in \mathbb{N}}$
of functions in  $\mathcal{D}(0,\ell; \mathbb{R})$ such that
\begin{equation}\label{eq:consqrtu}
\varphi_n\to \sqrt{u}\hbox{ in } L^2(0,\ell; \mathbb{R}) \hbox{ as } n\to\infty.
\end{equation}
Therefore by Cauchy--Schwarz's inequality we have
\begin{eqnarray*}
\int_0^\ell |u-\varphi_n^2|\,dx
&=& \int_0^\ell |(\sqrt{u}-\varphi_n)(\sqrt{u}+\varphi_n)|\,dx
\\
&\leq&\|\sqrt{u}-\varphi_n\|_{L^2(0,\ell)} \|\sqrt{u}+\varphi_n\|_{L^2(0,\ell)}
\\
&\leq&\|\sqrt{u}-\varphi_n\|_{L^2(0,\ell)} (\|\sqrt{u}\|_{L^2(0,\ell)}
+\|\varphi_n\|_{L^2(0,\ell)}.
\end{eqnarray*}
By \eqref{eq:consqrtu}, we conclude that this right-hand side tends to zero.
\end{proof}

Now we prove  two variants of
the fundamental lemma of the calculus of variations  (or ``du Bois--Reymond's lemma'').

\begin{lemma}\label{l:appendixB1}
 Let $h\in L^\infty(0,\ell; \mathbb{R})$ satisfy
\begin{equation}\label{eq:nullityassumptiondebase-le}
\int_0^{\ell}
h \varphi^2   \,dx\ge 0\qquad\hbox{for all } \varphi\in \mathcal{D}(0,\ell; \mathbb{R}),
\end{equation}
then $h\ge 0$. If in particular
\begin{equation}\label{eq:nullityassumptiondebase}
\int_0^{\ell}
h \varphi^2   \,dx= 0\qquad\hbox{for all } \varphi\in \mathcal{D}(0,\ell; \mathbb{R}),
\end{equation}
then $h=0$.
\end{lemma}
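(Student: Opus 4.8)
The plan is to exploit the density statement just proved in \autoref{l:appendixB0} so as to upgrade the hypothesis, which only tests $h$ against squares of test functions, into an inequality against arbitrary nonnegative integrable functions; a well-chosen indicator function then forces the sign of $h$.

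First I would note that, since $h\in L^\infty(0,\ell;\mathbb{R})$, the linear functional
\[
L^1(0,\ell;\mathbb{R})\ni \psi \mapsto \int_0^\ell h\,\psi\,dx
\]
is continuous, because $\left|\int_0^\ell h\,\psi\,dx\right|\le \|h\|_{L^\infty}\,\|\psi\|_{L^1}$. By the assumption \eqref{eq:nullityassumptiondebase-le} this functional is nonnegative on the set $\{\varphi^2:\varphi\in\mathcal{D}(0,\ell;\mathbb{R})\}$, which by \autoref{l:appendixB0} is dense in $L^1(0,\ell;\mathbb{R}_+)$. Passing to the limit along an approximating sequence of squares, I would conclude that
\[
\int_0^\ell h\,\psi\,dx\ge 0 \qquad\text{for all }\psi\in L^1(0,\ell;\mathbb{R}_+).
\]
Next I would argue by contradiction: setting $E:=\{x\in(0,\ell):h(x)<0\}$ and supposing $|E|>0$, the choice $\psi:=\mathbf{1}_E\in L^1(0,\ell;\mathbb{R}_+)$ would give $\int_0^\ell h\,\psi\,dx=\int_E h\,dx<0$, contradicting the inequality just obtained. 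Hence $|E|=0$, i.e.\ $h\ge 0$ almost everywhere, which settles the first assertion.

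For the sharper conclusion under \eqref{eq:nullityassumptiondebase}, I would observe that the equality $\int_0^\ell h\varphi^2\,dx=0$ for all $\varphi$ yields simultaneously $\int_0^\ell h\varphi^2\,dx\ge 0$ and $\int_0^\ell(-h)\varphi^2\,dx\ge 0$; applying the first part once to $h$ and once to $-h$ gives $h\ge 0$ and $h\le 0$ almost everywhere, hence $h=0$.

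The argument is essentially routine, and I do not expect a genuine obstacle: the only step that needs care is the passage from squares of test functions to general nonnegative $L^1$ data, combining the density in \autoref{l:appendixB0} with the $L^1$-continuity of the functional afforded by $h\in L^\infty$. This is precisely why the density lemma was isolated beforehand.
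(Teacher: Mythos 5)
Your proof is correct and follows essentially the same route as the paper's: both rest on \autoref{l:appendixB0} combined with the $L^{\infty}$--$L^{1}$ pairing to push the hypothesis, valid only on squares of test functions, to a suitable nonnegative $L^{1}$ function, and both dispose of the equality case by symmetry. The only cosmetic difference is the final witness: the paper approximates $h^{-}$ itself and concludes from $\int_0^{\ell} h\,h^{-}\,dx \ge 0$ that $\int_0^{\ell} (h^{-})^{2}\,dx = 0$, whereas you extend the inequality to all of $L^{1}(0,\ell;\mathbb{R}_{+})$ and then test against $\mathbf{1}_{\{h<0\}}$; either choice finishes the argument in one line.
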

\begin{proof}
As the second assertion is a direct consequence of the first one, it remains to check
the first one. Let $h\in L^\infty(0,\ell; \mathbb{R})$ satisfy \eqref{eq:nullityassumptiondebase-le}, then as it is in $L^1(0,\ell; \mathbb{R})$, it can be split up as
\[
h=h^+-h^-,
\]
where $h^+, h^-\in L^1(0,\ell; \mathbb{R}_+)$ is the positive and negative part of $h$ respectively.
According to \eqref{eq:nullityassumptiondebase} and \autoref{l:appendixB0}
we have
\[
\int_0^{\ell}
h h^-  \,dx\geq 0,
\]
which in turn leads to
\[
\int_0^{\ell}
(h^-)^2  \,dx=0,
\]
and proves that $h=h^+$ is nonnegative.
\end{proof}

This Lemma allows to prove a matrix-valued version  of
the fundamental lemma of the calculus of variations.
\begin{lemma}\label{l:appendixB}
Let $k$ be a positive integer and let $A\in L^\infty(0,\ell; \mathbb{C}^{k\times k})$
be such that
\[
A^*(x)= A(x) \hbox{ for a.e. } x\in (0,\ell).
\]
If $A$ satisfies
\begin{equation}\label{eq:nullityassumption}
 \int_0^{\ell}
A u \cdot \bar u   \,dx=0\qquad\hbox{for all }u  \in \mathcal{D}(0,\ell)^{k},
\end{equation}
then $A=0$.
\end{lemma}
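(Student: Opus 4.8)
The plan is to reduce the matrix-valued claim to the scalar statement in \autoref{l:appendixB1} by testing against rank-one functions. Fix a vector $\xi\in\C^{k}$ and a real-valued scalar test function $\varphi\in\mathcal{D}(0,\ell;\mathbb{R})$, and insert $u:=\varphi\,\xi$ into \eqref{eq:nullityassumption}. Since $\varphi$ is real, one has pointwise $Au\cdot\bar u=\varphi^{2}\,(A\xi\cdot\bar\xi)$, so that, writing $g_\xi(x):=A(x)\xi\cdot\overline{\xi}$, the hypothesis yields
\[
\int_0^{\ell} g_\xi\,\varphi^{2}\,dx=0\qquad\text{for all }\varphi\in\mathcal{D}(0,\ell;\mathbb{R}).
\]
Because $A(x)$ is Hermitian for a.e.\ $x$, the function $g_\xi$ is real-valued and belongs to $L^\infty(0,\ell;\mathbb{R})$; hence \autoref{l:appendixB1} applies and gives $g_\xi=0$ a.e.\ in $(0,\ell)$. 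This is the one place where the Hermitian hypothesis is genuinely used, as it is exactly what guarantees that the scalar integrand is real and thus amenable to \autoref{l:appendixB1}.

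The second step is to upgrade this to a statement holding simultaneously for all $\xi$. The null set on which $g_\xi$ might fail to vanish depends a priori on $\xi$, so I would first fix a countable dense subset $\Xi\subset\C^{k}$, for instance $\Xi=(\mathbb{Q}+\imath\mathbb{Q})^{k}$. Then there is a \emph{single} null set $N\subset(0,\ell)$, namely the countable union of the individual null sets, such that $g_\xi(x)=0$ for every $\xi\in\Xi$ and every $x\notin N$. For fixed $x\notin N$ the map $\xi\mapsto A(x)\xi\cdot\bar\xi$ is continuous, so its vanishing on the dense set $\Xi$ forces
\[
A(x)\xi\cdot\bar\xi=0\qquad\text{for all }\xi\in\C^{k}\text{ and all }x\notin N.
\]

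It then remains to pass from the vanishing of the quadratic form to $A(x)=0$. For a fixed $x\notin N$ this is pure linear algebra: a matrix whose associated form $\xi\mapsto A(x)\xi\cdot\bar\xi$ vanishes identically on $\C^{k}$ must be the zero matrix, as one sees by complex polarization, i.e.\ by evaluating the form on $\xi+\eta$ and on $\xi+\imath\eta$ and combining the resulting identities to recover the sesquilinear form $(\eta,\xi)\mapsto A(x)\eta\cdot\bar\xi$, and hence every entry of $A(x)$. Consequently $A(x)=0$ for all $x\notin N$, that is, $A=0$ in $L^\infty(0,\ell;\C^{k\times k})$. I do not expect a serious obstacle here: the only subtlety is the interplay between the pointwise linear-algebra conclusion and the almost-everywhere setting, and this is precisely what the reduction to a countable dense set of test vectors together with the continuity of the form in $\xi$ takes care of.
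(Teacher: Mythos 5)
Your proof is correct and follows essentially the same route as the paper's: testing \eqref{eq:nullityassumption} with rank-one functions $\varphi\,\xi$ (real scalar test function times a constant vector), using the Hermitian hypothesis to make the scalar integrand real so that \autoref{l:appendixB1} applies, and then recovering the entries of $A$ by polarization. The only difference is organizational: the paper polarizes first, applying the scalar lemma only to the finitely many vectors $e_i$, $e_i+e_j$, $e_i+\imath e_j$ (so no countable-dense-set argument in $\xi$ is needed), whereas you apply the scalar lemma to a countable dense set of directions and polarize pointwise afterwards.
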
  
\begin{proof}
First we show that the diagonal entries of   $A=(A_{i,j})_{1\leq i, j\leq k}$ are zero.
Indeed let us fix $i\in\{1,\cdots, k\}$
and in \eqref{eq:nullityassumption} take test functions $u$ in the form
$u=\varphi e_i$, where $\varphi\in \mathcal{D}(0,\ell; \mathbb{R})$ is abitrary and
$e_i$ is the $i$th element of the canonical basis of $\mathbb{C}^k$.
Then we get
\[
\int_0^{\ell}
A_{ii} \varphi^2   \,dx=0\qquad\hbox{for all } \varphi\in \mathcal{D}(0,\ell; \mathbb{R}),
\]
and   \autoref{l:appendixB1} yields $A_{ii}=0$ because $A_{ii}$ is real-valued.

Let us now manage the off-diagonal entries of $A$.
Fix $i,j \in  \{1,\cdots, k\}$ with $i<j$. Now we chose  two family of test-functions in \eqref{eq:nullityassumption}:
\\
1) First take test functions $u$ in the form
\[u=\varphi (e_i+e_j),
\]
 where $\varphi\in \mathcal{D}(0,\ell; \mathbb{R})$ is arbitrary. Then  \eqref{eq:nullityassumption}
 reduces to
\[
\int_0^{\ell}
(A_{ij}+A_{ji}) \varphi^2   \,dx=0\qquad\hbox{for all } \varphi\in \mathcal{D}(0,\ell; \mathbb{R}).
\]
Again 
$A_{ij}+A_{ji}=2\Re A_{ij}$ is real-valued because $A$ is hermitian,
and  \autoref{l:appendixB1} yields $\Re A_{ij}=0$.
\\
2) Second take test functions $u$ in the form
\[u=\varphi (e_i+\imath e_j),
\]
 where $\varphi\in \mathcal{D}(0,\ell; \mathbb{R})$ is arbitrary to obtain
 \[
\int_0^{\ell}
(A_{ij}-A_{ji}) \varphi^2   \,dx=0\qquad\hbox{for all } \varphi\in \mathcal{D}(0,\ell; \mathbb{R}).
\]
Since $A_{ij}-A_{ji}=2\imath \Im A_{ij}$, this means that
 \[
\int_0^{\ell}
\Im A_{ij}\varphi^2   \,dx=0\qquad\hbox{for all } \varphi\in \mathcal{D}(0,\ell; \mathbb{R}),
\]
and therefore  $\Im A_{ij}=0$ due to  \autoref{l:appendixB1}.
\end{proof}

 \section{On subspaces of isotropic cones associated with a quadratic form}

 In this section we fix a positive integer $k$
 and a hermitian and invertible matrix $P\in \mathbb{C}^{k\times k}$. Its  associated quadratic form
 $q$ is defined by 
 \[
 q(\xi)=P\xi\cdot \bar \xi, \quad \xi\in \mathbb{C}^k.
 \]
 Now we introduce some cones associated with $q$, see \cite[Def.~3.1]{Lam}.
 \begin{defi}\label{def:nonpositiveisocone}
 \begin{enumerate}[1)] 
\item The \emph{null isotropic cone}
associated with the quadratic form $q$ is defined as the set of isotropic  vectors associated with $q$, namely
the set of
 vectors $\xi\in \C^{k}$ such that
\begin{equation}\label{eq:isotropicvector}
 q(\xi)=0.
  \end{equation}
A subspace of the  null isotropic cone
associated with   $q$ is called  a \emph{totally isotropic subspace}
and the \emph{isotropy index} (of the quadratic space associated with $q$), denoted here by $i(q)$, is the maximum of the dimensions of the totally isotropic subspaces.
\item
The \emph{nonpositive   (resp.~nonnegative) isotropic cone}
associated with the quadratic form $q$  is defined as the set of vectors $\xi\in \C^{k}$ such that
\begin{equation}\label{eq:nonpositiveisotropicvector}
q(\xi)\leq 0  \hbox{ (resp.~} \geq 0).
\end{equation}
\end{enumerate}
\end{defi}

From Lemma 1.2 of \cite{Milnor-Husemoller} we know that 
$i(q)\leq k/2$ but, surprisingly, we could not find in the literature a reference that yields a characterization of $i(q)$. Hence
the goal of this appendix is to characterize this isotropic index as well as  the maximal dimension of any subspace
 of nonpositive isotropic cones.

Let $\{\lambda_i\}_{i=1}^k$ be the set of eigenvalues of $P$, repeated according to their multiplicities and
 enumerated in an increasing order, and denote by $\{u_i\}_{i=1}^k$  the set of the associated  normalized eigenvectors, i.e.,
 \[
 Pu_i=\lambda_i u_i\quad\hbox{and}\quad  u_i\cdot \bar u_j=\delta_{ij}\quad \text{ for all }  i\in \{1,\dots, k\}.
 \]
Denote by $k_-$ (resp. $k_+$) the number of negative (resp. positive) eigenvalues of $P$.
Without loss of generality, we may assume that $\lambda_i<0$ for $i\leq k_-$
and $\lambda_i>0$ for $i> k_-$.
We will see that the isotropic index $i(q)$ agrees with $\min\{k_-,k_+\}$.

\begin{lemma}\label{l:iso}
Any subspace of the {null}  isotropic  cone associated with the form $q$ has dimension at most $\kappa:=\min\{k_-,k_+\}$.
Furthermore if $\kappa\geq 1$, there exist   at least $2^\kappa$ subspaces of the null  anisotropic cone associated with $q$
 of dimension $\kappa$.
\end{lemma}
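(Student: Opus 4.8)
The plan is to diagonalise $P$ and reduce everything to a weighted sum of squares. Since $P$ is Hermitian, the normalised eigenvectors $\{u_i\}_{i=1}^k$ form an orthonormal basis of $\C^k$, so writing $\xi=\sum_{i=1}^k \xi_i u_i$ with $\xi_i=\xi\cdot\bar u_i$ gives $q(\xi)=\sum_{i=1}^k \lambda_i|\xi_i|^2$. With the ordering fixed before the statement, I would set $V_-:=\lin\{u_1,\dots,u_{k_-}\}$ and $V_+:=\lin\{u_{k_-+1},\dots,u_k\}$, of dimensions $k_-$ and $k_+$, and decompose $\xi=\xi_-+\xi_+$ accordingly. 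On $V_+$ the form $q$ is positive definite and on $V_-$ it is negative definite.

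For the upper bound I would take an arbitrary totally isotropic subspace $W$ and show that the orthogonal projection $\pi_-\colon W\to V_-$ is injective. Indeed, if $\xi\in W$ satisfies $\pi_-\xi=0$, then $\xi=\xi_+\in V_+$, so $0=q(\xi)=\sum_{i>k_-}\lambda_i|\xi_i|^2$; positive definiteness of $q$ on $V_+$ then forces $\xi=0$. Hence $\dim W\le\dim V_-=k_-$, and the symmetric argument with the projection onto $V_+$ yields $\dim W\le k_+$, so $\dim W\le\kappa$.

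For the existence part I assume without loss of generality $\kappa=k_-\le k_+$ and construct the subspaces explicitly by pairing each negative eigenvector with a positive one. For a sign pattern $\epsilon=(\epsilon_1,\dots,\epsilon_{k_-})\in\{+1,-1\}^{k_-}$ I would set
\[
w_i^\epsilon:=u_i+\epsilon_i\beta_i\,u_{k_-+i},\qquad \beta_i:=\sqrt{|\lambda_i|/\lambda_{k_-+i}}>0,\quad i=1,\dots,k_-,
\]
and $W_\epsilon:=\lin\{w_1^\epsilon,\dots,w_{k_-}^\epsilon\}$. Using orthonormality of the $u_i$ together with $\lambda_i+\beta_i^2\lambda_{k_-+i}=0$, the sesquilinear form $B(\xi,\eta):=P\xi\cdot\bar\eta$ satisfies $B(w_i^\epsilon,w_j^\epsilon)=\delta_{ij}\bigl(\lambda_i+\beta_i^2\lambda_{k_-+i}\bigr)=0$ for all $i,j$, since distinct pairs involve disjoint eigenvectors. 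Thus $q$ vanishes identically on $W_\epsilon$, and as the $w_i^\epsilon$ are manifestly independent, each $W_\epsilon$ is a totally isotropic subspace of dimension $\kappa$.

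It then remains to check that the $2^\kappa$ subspaces $W_\epsilon$ are pairwise distinct, which I expect to be the only genuinely fiddly step. The key observation is that $W_\epsilon$ is a direct sum of lines sitting in the mutually orthogonal coordinate planes $\lin\{u_i,u_{k_-+i}\}$, and a short computation on coordinates shows $W_\epsilon\cap\lin\{u_i,u_{k_-+i}\}=\C\,w_i^\epsilon$. Along this line the ratio of the $u_{k_-+i}$-coordinate to the $u_i$-coordinate equals $\epsilon_i\beta_i$; since $\beta_i>0$ is fixed, this recovers the sign $\epsilon_i$, so distinct patterns $\epsilon$ give distinct subspaces. This produces the required $2^\kappa$ totally isotropic subspaces of dimension $\kappa$ and, in passing, confirms that the isotropy index $i(q)$ equals $\kappa$.
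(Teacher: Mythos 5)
Your proof is correct, and it splits naturally into a part that matches the paper and a part that genuinely differs. The existence half is essentially the paper's own construction: the paper's isotropic vectors $U_i^{\pm}=u_i\pm\imath\sqrt{\lambda_i/\lambda_{k_-+i}}\,u_{k_-+i}$ are, once the square root of the negative number $\lambda_i/\lambda_{k_-+i}$ is unwound, precisely your $u_i\mp\beta_i u_{k_-+i}$, and the $2^\kappa$ subspaces come from the same sign choices; you are in fact more careful than the paper on one point, namely in verifying that distinct sign patterns yield distinct subspaces via the coordinate ratio in each plane $\lin\{u_i,u_{k_-+i}\}$, whereas the paper only remarks that the $U_i^{+}$'s and $U_i^{-}$'s are linearly independent. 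Where you diverge is the upper bound. The paper argues by contradiction: assuming $\dim I>k_-$, it runs an iterative Gram--Schmidt-type elimination on a basis of $I$ which, after $k_-$ steps, produces a nonzero vector of $I$ supported only on the eigenvectors with positive eigenvalues, contradicting $q>0$ there. Your argument --- the kernel of the projection $\pi_-\colon W\to V_-$ is $W\cap V_+$, which is trivial because $q$ is positive definite on $V_+$ and vanishes on $W$ --- reaches the same conclusion in two lines, and it gives both bounds $\dim W\le k_-$ and $\dim W\le k_+$ symmetrically, with no need for the paper's initial reduction to $\kappa=k_-$ (which you still need, and correctly invoke, for the pairing in the construction). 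The trade-off is purely expository: the paper's elimination is a self-contained coordinate manipulation, while yours isolates the structural fact $W\cap V_\pm=\{0\}$ that makes the dimension count immediate.
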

\begin{proof}
If $\kappa=0$, this means that $P$ is either positive definite or negative definite
and therefore the associated isotropic cone is reduced to  $\{0\}$. So the only case of interest is the case $\kappa\geq 1$.
By symmetry, we can assume that $\kappa=k_-$.
So let us now fix a subspace $I$   of the  isotropic cone  associated with $q$. 
Every nonzero $u\in I$ can be written as
\begin{equation}\label{eq:isotropicvectorinbasis}
u=\sum_{i=1}^k \alpha_i u_i,
\end{equation}
 for some $\alpha_i \in \mathbb{C}$ which are not all zero. Since  \eqref{eq:isotropicvector} is 
equivalent to
 \begin{equation}\label{eq:isotropiccond}
\sum_{i=k_-+1}^{k} |\alpha_i|^2\lambda_i=-\sum_{i=1}^{k_-} |\alpha_i|^2\lambda_i,
\end{equation}
we find that there exists at least one $i\leq k_-$
such that $\alpha_i\ne0$.

 Assume that $K:=\dim I>k_-$ and let $\{U_i\}_{i=1}^K$ be a basis of $I$. 
Let us write
\begin{equation}\label{eq:isotropicvectorbasis}
U_i=\sum_{j=1}^k\alpha_{ij} u_j,
\end{equation}
for some  $\alpha_{ij} \in \mathbb{C}$. By the previous remark, for all $i\leq K$, there exists
$j\leq k_-$ such that
$\alpha_{ij}\ne 0$.

Now we use a sort of Gram--Schmidt procedure: Starting with $i=1$
and without loss of generality (else we change the enumeration) we can assume that 
$\alpha_{11}\ne 0$ and, consequently, we have
\[
u_1=\delta_{1} \tilde U_1-\sum_{j=2}^k \frac{\alpha_{1j}}{\alpha_{11}} u_j.
\]
where 
we have set $\tilde U_1:=U_1$ and $\delta_{1}:=\frac{1}{\alpha_{11}}$.
Plugging this expression into \eqref{eq:isotropicvectorbasis} with $i=2$, we find that
\begin{equation}\label{eq:isotropicvectorbasis2}
U_2=\frac{\alpha_{21}}{\alpha_{11}} U_1+\sum_{j=2}^k\tilde \alpha_{2j} u_j,
\end{equation}
with some $\tilde \alpha_{2j}\in \mathbb{C}$.
This means that the new vector
$\tilde U_2:=U_2-\frac{\alpha_{21}}{\alpha_{11}} U_1$, that is still in $I$,
has at least one coefficient $\tilde \alpha_{2j}$ different from zero for $j\in\{2, \dots, k_-\}$. Again,  after a possible
change of enumeration, we can assume that $\tilde \alpha_{22}\ne 0$, hence
we have
 \[
u_2=\delta_{2}  \tilde U_2-\sum_{j=3}^k \frac{\alpha_{2j}}{\tilde \alpha_{22}} u_j.
\]
where $\delta_{2}=\frac{1}{\tilde \alpha_{22}}$.
Note that the new set
$\{\tilde U_1, \tilde U_2\}\cup \{U_i\}_{i=3}^K$ forms a basis of $I$.
By iterating this procedure, after $k_-$ steps, we will find a basis
$\{\tilde U_i\}_{i=1}^{k_-}\cup \{U_i\}_{i=k_-+1}^K$ of $I$ such that
\begin{equation}\label{eq:isotropicvectorbasisinverse}
u_i={\delta_{i} \tilde U_i} - \sum_{j=k_-+1}^k \beta_{ij}u_j \text{ for all }i=1,\dots, k_-,
\end{equation}
for some $\delta_{j}\in \mathbb{C}{\setminus\{0\}}$ and some $ \beta_{ij}\in  \mathbb{C}$.

By using the expansion \eqref{eq:isotropicvectorbasis} of $U_{k_-+1}$
and \eqref{eq:isotropicvectorbasisinverse}, we find
\[
U_{k_-+1}=\sum_{j=1}^{k_-} \alpha_{ij} \delta_{j} \tilde U_j+\sum_{j=k_-+1}^k\gamma_{ij} u_j,
\]
for some $\gamma_{ij} \in \mathbb{C}^*$.
We then arrive to a contradiction because on one hand the vector
{$V:=U_{k_-+1}-\sum_{j=1}^{k_-} \alpha_{ij} \delta_{j} \tilde U_j$ is in $I$, hence $q(V)=0$, 
while on the other hand $V\ne 0$ is a linear combination of the $u_j$'s for $j\geq k_-+1$, hence $q(V)>0$}. 

For the last assertion,  if in \eqref{eq:isotropicvectorinbasis}, for all $i=1,\dots, k_-$, 
we   chose
\[
\alpha_i=1 \quad\text{ and } \quad\alpha_{i'}=0 \text{ for all }i'\notin \{i, k_-+i\},
\]
condition 
\eqref{eq:isotropiccond} will hold if and only if
\[
|\alpha_{k_-+i}|^2=-\frac{\lambda_i}{\lambda_{k_-+i}},
\]
or equivalently
\[
\alpha_{k_-+i}=\pm \imath \sqrt{\frac{\lambda_i}{\lambda_{k_-+i}}}.
\]
This yields the isotropic vectors
\begin{equation}\label{eq:isotropic-vec}
U_i^{\pm}= u_i\pm \imath \sqrt{\frac{\lambda_i}{\lambda_{k_-+i}}} u_{k_-+i}.
\end{equation}
And, since the $U_i^{+}$'s and the $U_i^{-}$'s are linearly independent, we find
$2^{k_-}$ possibilities.
\end{proof}
 
 A similar assertion holds for subspaces of the nonpositive isotropic cone
associated with the quadratic form $q$ -- one such subspace is spanned by the first $k_-$ eigenvectors $\{u_i\}_{i=1}^{k_-}$ of $P$.

\begin{lemma}\label{l:nonpositiveiso}
Any subspace of the nonpositive isotropic subspace associated with the form $q$ has dimension at most $k_-$.
\end{lemma}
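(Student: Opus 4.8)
The plan is to diagonalize and reduce everything to a transversality-plus-dimension count against the positive eigenspace. First I would work in the orthonormal eigenbasis $\{u_i\}_{i=1}^k$ of $P$ fixed before the lemma, in which every $\xi=\sum_{i=1}^k\alpha_i u_i$ satisfies $q(\xi)=\sum_{i=1}^k\lambda_i|\alpha_i|^2$ (a genuinely real quantity, since $P$ is Hermitian). Set $W_+:=\lin\{u_i:i>k_-\}$, the span of the eigenvectors attached to the strictly positive eigenvalues; then $\dim W_+=k_+=k-k_-$, and the above expansion shows that $q(\xi)>0$ for every nonzero $\xi\in W_+$.

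Now let $Y$ be any subspace of the nonpositive isotropic cone, so that $q(\xi)\le 0$ for all $\xi\in Y$. The key observation is that $Y\cap W_+=\{0\}$: a nonzero vector in the intersection would satisfy both $q(\xi)\le 0$ (membership in $Y$) and $q(\xi)>0$ (membership in $W_+\setminus\{0\}$), a contradiction. Hence the sum $Y+W_+$ is direct, and
\[
\dim Y + k_+ \;=\; \dim Y + \dim W_+ \;=\; \dim\left(Y\oplus W_+\right) \;\le\; k,
\]
which gives $\dim Y\le k-k_+=k_-$, as asserted.

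I expect no serious obstacle here: this is the nonpositive analogue of \autoref{l:iso} and is in fact easier, because one only needs transversality of $Y$ to the positive eigenspace rather than the iterated Gram--Schmidt argument carried out there. The only point requiring (minor) care is that $q$ is real-valued — which holds precisely because $P$ is Hermitian — so that the inequalities $q(\xi)\le 0$ and $q(\xi)>0$ are meaningful and mutually exclusive on the respective subspaces. As a side remark, the same argument applied to the negative eigenspace $W_-:=\lin\{u_i:i\le k_-\}$ confirms that the bound is sharp, $W_-$ itself being a subspace of the nonpositive isotropic cone of dimension exactly $k_-$.
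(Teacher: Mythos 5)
Your proof is correct, but it follows a genuinely different route from the paper's. The paper proves this lemma by recycling the iterated Gram--Schmidt-type elimination from \autoref{l:iso}: it first observes that any nonzero $u$ in such a subspace $I$ must have a nonzero coefficient on some eigenvector $u_i$ with $i\le k_-$ (otherwise $q(u)>0$), and then, assuming $\dim I>k_-$, runs the same successive-elimination argument as in the totally isotropic case to manufacture a nonzero vector of $I$ supported only on the positive eigenvectors, whence $q>0$, a contradiction. You instead use the classical transversality argument: since $q>0$ on $W_+\setminus\{0\}$ and $q\le 0$ on $Y$, the intersection $Y\cap W_+$ is trivial, so $\dim Y+k_+=\dim(Y\oplus W_+)\le k$ and $\dim Y\le k-k_+=k_-$ (the last equality using invertibility of $P$, so that $k_-+k_+=k$). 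Your version is shorter and more standard -- it is the textbook proof that the negativity index bounds the dimension of any subspace on which the form is negative semidefinite -- and, as a bonus, it also yields the upper bound in \autoref{l:iso} immediately, since a totally isotropic subspace lies in both the nonpositive and the nonnegative cones, giving $\dim\le\min\{k_-,k_+\}$ without any elimination procedure. What the paper's approach buys is merely economy of exposition in context: having already set up the elimination machinery for \autoref{l:iso}, it can dispatch this lemma with the remark that ``the proof is exactly the same.'' Your closing observation that $W_-=\lin\{u_i:i\le k_-\}$ realizes the bound, so that it is sharp, is also correct and is implicit (but not stated as part of the proof) in the paper, which exhibits the same subspace just before the lemma.
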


\begin{proof}
If $k_-=0$, this means that $P$ is positive definite  
and therefore the only possible choice for such a subspace is $\{0\}$. So the only case of interest is the case $k_-\geq 1$.
Now the proof is exactly the same as the one of the previous Lemma. Indeed let $I$ be such a subspace and  let 
$u\in I$ different from zero, then it admits the splitting  \eqref{eq:isotropicvectorinbasis}
with some $\alpha_i \in \mathbb{C}$ not all zeroes. Since the constraint 
\[
P u\cdot \bar u\leq 0,
\]
is equivalent to
 \begin{equation}\label{eq:nonpositiveisotropiccond}
\sum_{i=k_- +1}^{k} |\alpha_i|^2\lambda_i+\sum_{i=1}^{k_-} |\alpha_i|^2\lambda_i\leq 0,
\end{equation}
we again find that there exists at least one $i\leq k_-$
such that $\alpha_i\ne0$. The previous argument then leads to $\dim I\leq  k_-$.
\end{proof}

\end{document}